\def\elsartstyle{%
    \def\normalsize{\@setfontsize\normalsize\@xiipt{14.5}}
    \def\small{\@setfontsize\small\@xipt{13.6}}
    \let\footnotesize=\small
    \def\large{\@setfontsize\large\@xivpt{18}}
    \def\Large{\@setfontsize\Large\@xviipt{22}}
    \skip\@mpfootins = 18\p@ \@plus 2\p@
    \normalsize
} \@ifundefined{square}{}{}
\newtheorem{theorem}{Theorem}[section]
\newtheorem{assp}{Assumption}
\newtheorem{lemma}[theorem]{Lemma}
\newtheorem{rem}[theorem]{Remark}
\newtheorem{expl}[theorem]{Example}
\newtheorem{cor}[theorem]{Corollary}
\makeatletter \@addtoreset{equation}{section}
\newcommand{\E}{\mathbb{E}}
\newcommand{\RR}{\mathbb{R}}
\def\nn{\nonumber}
  \def\e{\varepsilon}
\def\la{\label}\def\be{\begin{equation}}  \def\ee{\end{equation}}
  \def\va{\varsigma} \def\var{\varrho} 
\journal{Journal of \LaTeX\ Templates}
\begin{document}
\begin{sloppypar}
\begin{frontmatter}

\title{ Numerical approximation to the invariant measure of McKean-Vlasov
stochastic differential equations}
\tnotetext[mytitlenote]{
}

\author[a]{Yuanping Cui}
\ead{cuiyp@tiangong.edu.cn}
\author[b]{Xiaoyue Li\corref{mycorrespondingauthor}}
\cortext[mycorrespondingauthor]{Corresponding author}
\ead{lixy@tiangong.edu.cn }
\author[c]{Yi Liu}
\ead{yzl0274@auburn.edu}
\author[d]{Fengyu Wang}
\ead{wangfy@tju.edu.cn}
\address[a]{School of Mathematical Sciences,
Tiangong  University, Tianjin,  300387, China. }
\address[b]{School of Mathematical Sciences,
Tiangong  University, Tianjin,  300387, China.}
\address[c]{Department of Mathematics and Statistics, Auburn University, Auburn AL 36849, USA.}
\address[d]{Center for Applied Mathematics, Tianjin University,
 Tianjin, 300072, China}
\begin{abstract}
Inspired by the stochastic particle method, this paper establishes  an easily implementable explicit numerical method for  McKean-Vlasov stochastic differential equations (MV-SDEs) with superlinear growth  coefficients. The paper establishes the theory  on  the propagation of chaos in the  $L^{q}$ sense.  The optimal {uniform-in-time} strong convergence rate $1/2$-order of the numerical solutions is obtained for the interacting particle system. Furthermore, it is proved that the  numerical solutions capture the long-term dynamical behaviors of MV-SDEs precisely, including moment boundedness, stability, and ergodicity. Moreover, a unique  numerical invariant probability measure is yielded, which converges to the underlying invariant probability measure of MV-SDEs in the $L^2$-Wasserstein distance. Finally, several numerical experiments are carried out to illustrate the main results.
\end{abstract}

\begin{keyword}
McKean-Vlasov stochastic differential equations; Numerical method;  Strong convergence; Stability; Invariant probability measure.
\end{keyword}

\end{frontmatter}

\section{Introduction}\label{s-w}

The McKean-Vlasov stochastic differential equations (MV-SDEs)  constitute  a special kind of stochastic differential equations (SDEs) in which the drift and diffusion coefficients depend not only on the current state but also on the marginal law of the solution process,  as shown by 
 \begin{equation}\label{eq3.1}
\mathrm{d}X_{t}=f(X_{t},\mathcal{L}_{t}^{X})\mathrm{d}t+g(X_{t},\mathcal{L}_{t}^{X})\mathrm{d}B_{t},~~~\forall t\geq0,
\end{equation}
where~$\mathcal{L}_{t}^{X}$~denotes the { law of~the solution $X$ at time $t$}. From a modeling perspective, MV-SDE \eqref{eq3.1} has been used to  describe stochastic systems 
whose evolution is determined by both the microcosmic site and the macrocosmic distribution of the particle.  Especially, MV-SDEs  with superlinear coefficients
are often used as models in the fields of biological systems, financial engineering, physics, and others (see, e.g. \cite{BJ2012,GC2018,BRL2017,CR2018}).  In order to satisfy the demands of the application, developing  numerical methods for MV-SDEs is necessary and indispensable.  Despite its analytical usefulness, numerical computation of the invariant measure remains a central challenge for MV-SDEs in practice \cite{Lin, Bram}.  Therefore, the main objective of this paper is to propose an explicit scheme for MV-SDEs, to  approximate the dynamical behaviors in finite  and  infinite horizons, and to establish the numerical ergodicity property.

  The well-posedness theory is significant and  fundamental for MV-SDE research. Given that the dynamics are not only determined by their trajectories but also their global distributions, the solutions of MV-SDEs lack the strong Markov property. 
Thus,  the classical localization techniques and the Yamada-Watanabe principle for SDEs are not feasible for MV-SDEs  \cite{WFY2018}. As well as we know, three techniques, including the distribution iteration (see, e.g. \cite{WFY2018, Mishura, HuangX, Chaudru}),  the fixed point theorem (see, e.g. \cite{Martin,Ren,SAS1991}) and the Euler-type approximation  \cite{Hongw} are often used to analyze the strong well-posedness of MV-SDEs. For extensive results on the well-posedness of  weak solutions, refer to \cite{MR4260494,CR2018}. 

 Recently, the long-time dynamical behaviors have attracted much attention. 
Ding and Qiao \cite{DXJ2020} delineated sufficient conditions for the mean-square exponential stability and the almost surely asymptotic stability.  Wu-Hu-Gao-Yuan \cite{MR4489658} gave the stabilization  principles of MV-SDEs via feedback control based on discrete-time state observation. The ergodicity of MV-SDEs has been meticulously analyzed under a spectrum of dissipative conditions. For instance, Wang \cite{WFY2018} derived the exponential ergodicity in the $L^2$-Wasserstein distance under the uniform dissipative condition. \cite{MR4291453,Vere,MR4312362,MR1970276} also yielded the ergodicity  under the dissipative condition in long distances, given that the dependence of drift coefficient on $\mu$ is sufficiently {weak}. Wang \cite{MR4550214} went a further step to examine the exponential ergodicity in the  Wasserstein quasi-distance  for a class of non-dissipative MV-SDEs. %,  which further 

Although the dynamical properties of the exact solutions of MV-SDEs have been well investigated, obtaining their closed forms  is almost impossible. Thus, it is essential and necessary to establish reliable numerical methods for MV-SDEs, especially with superlinear structures. 
When numerically solving MV-SDEs, the propagation of chaos plays a key role in  discretizing the distribution in the coefficients.  For clarity, we introduce an interacting particle system (IPS) as follows. For any integer $M\geq 1$,
\begin{equation*}
\left\{
\begin{array}{l}
\mathrm{d}X^{i,M}_{t}=f\left(X^{i,M}_{t},\mathcal{L}_{t}^{X,M}\right)\mathrm{d}t+g\left(X^{i,M}_{t},\mathcal{L}_{t}^{X,M}\right)\mathrm{d}B^{i}_{t},~~~t\geq0,\\
X^{i,M}_{t}|_{t=0}=X^{i}_{0},~~~1\leq i\leq M,
\end{array}
\right.
\end{equation*}
where  $\left(\{B^{i}_{t}\}_{t\geq0},X^{i}_0\right), ~ i=1,\cdots, M$,  are mutually  independent copies of $\left(\{B_{t}\}_{t\geq0},X_{0}\right)$ on the same probability space $(\Omega,\mathcal{F},\mathbb{P}, \{\mathcal{F}_{t}\}_{t\geq0})$,  and $\mathcal{L}^{X,M}_{t}=\frac{1}{M}\sum_{i=1}^{M}\boldsymbol{\delta}_{X^{i,M}_{t}}$ called the empirical distribution of particles $X^{i,M}_{t},~i=1,2,\cdot\cdot\cdot,M$. The  propagation of chaos reveals that  as $M \rightarrow \infty$, the solution of  each particle in IPS converges to that of  the original MV-SDE.  For a more comprehensive understanding of the propagation of chaos, refer to literature \cite{MHP1966, arxiv2301, MR4722362, MHP1967,MR4312362,arXivBJ, MR4291453,MR4702608, MR4163850,SAS1991} and therein. Therefore, applying the propagation of chaos to design the scheme for IPS is an efficient way to establish the numerical method for MV-SDEs. 

With the help of the propagation of chaos, the numerical methods for MV-SDEs have achieved substantial results. 
The Euler-Maruyama (EM) scheme is proposed for the approximation of   MV-SDEs with linear growth coefficients (see, e.g. \cite{Yun, MR4289889,  Vere}). For  example, \cite{Vere} used the EM scheme to approximate the ergodic measure of MV-SDEs with  additive noise and bounded linear drift.  However,
   as pointed out by \cite{MR4367675},  EM numerical solutions diverge for  the IPS with superlinear structure  due to  the ``particle corruption'' phenomenon, namely,  the divergence of a single particle causes the  divergence of the entire IPS.  For example, consider  the scalar MV-SDE
\begin{align}\label{Ne4}
\mathrm{d}X_t=\big(X_t(-2-|X_t|)+\mathbb{E}X_t\big)\mathrm{d}t+\frac{1}{2}|X_t|^{\frac{3}{2}}\mathrm{d}B_t.
\end{align}
Here \begin{align*}
f(x,\mu)=x(-2-|x|)+\int_{\mathbb{R}}x\mu(dx),~~~~g(x,\mu)=\frac{1}{2}|x|^{\frac{3}{2}}.
\end{align*}
One notices that these coefficients satisfy Assumptions \ref{ass5} and \ref{ass6} given in Section \ref{sec55}. According to Corollary \ref{th6.2},  the exact solutions of both MV-SDE \eqref{Ne4} and the corresponding IPS \eqref{Ne1} are exponentially  stable in the mean square. However,  as pointed out in \cite{MR4367675}, superlinear coefficient structure may give rise to the ``particle corruption'' effect during the simulation of  the IPS \eqref{Ne1} by the EM scheme.  Figure \ref{FN1} predicts the sample paths of the EM solutions of the IPS \eqref{Ne1}  diverging to the exact ones.  Moreover,  Lemma \ref{AL2}  verifies  that the EM solutions are never exponentially stable in the mean square, regardless of how small the step size $\Delta$  is. Therefore, the EM scheme is no longer suitable for the approximation of MV-SDEs with superlinear growth coefficients. 
\begin{figure}[H]
  \centering
\includegraphics[width=12cm,height=6cm]{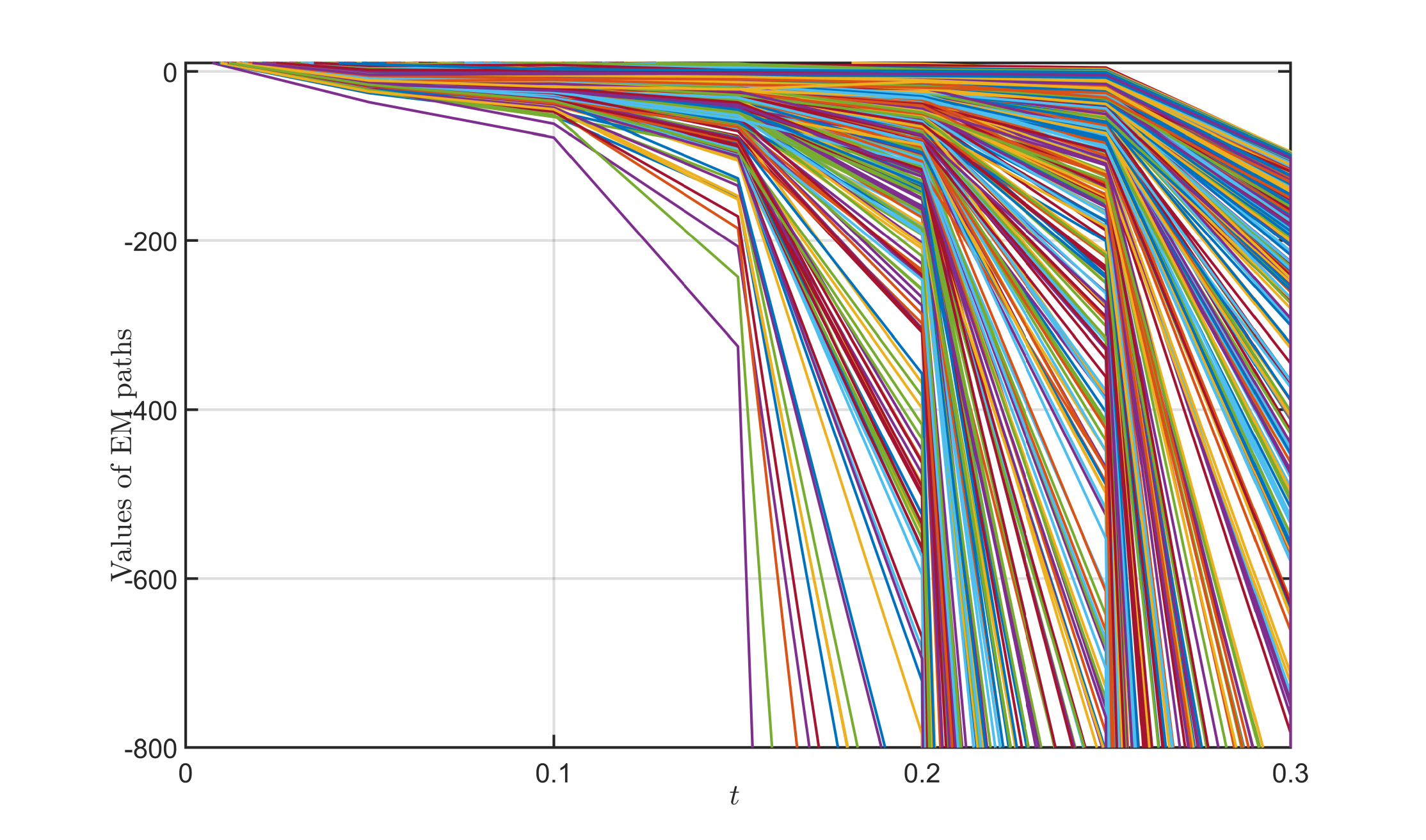}
\captionsetup{font=footnotesize}
    \caption{The sample paths of the  numerical solutions for the IPS \eqref{Ne1} by the EM scheme  for the initial value $X_0=18$, $\Delta=0.05$ and $M=2000$.}
\label{FN1}
\end{figure}

Owing to their simple algebraic structure and ease of implementation, constructing appropriate explicit schemes to approximate the dynamical behaviors of MV-SDEs with superlinear growth coefficients-and to estimate their strong convergence rates-is both a significant and desirable endeavor. For standard SDEs, a variety of explicit schemes have been developed, including tamed Euler-Maruyama (EM) and Milstein schemes \cite{MR2985171,MR3070913}, adaptive EM schemes \cite{MR4362994,MR4108115}, balanced schemes \cite{MR3129758}, and truncated EM schemes \cite{MR3370415,LXY2019}.

Inspired by these excellent works, several explicit schemes have been adapted for MV-SDEs with superlinear growth in the state variable. For instance, for MV-SDEs with a superlinear drift and a globally Lipschitz continuous diffusion, {\cite{MR4367675, Zhang}} introduced a tamed EM scheme achieving a $1/2$-order strong convergence rate, while \cite{MR4302574, MR4212406} developed a tamed Milstein scheme with a $1$-order rate. Furthermore, \cite{MR4497846} extended these tamed EM and Milstein schemes to MV-SDEs incorporating common noise. Under the same superlinear coefficient setting, \cite{MR4293705} proposed adaptive EM and Milstein schemes with $1/2$-order and $1$-order strong convergence rates, respectively. \cite{Soni} proposed a tamed EM scheme to deal with the MV-SDEs with superlinear coefficients in both the state and measure variables (modeled via convolution). Particularly noteworthy is that reference \cite{Soni} systematically reviewed existing numerical methods for MV-SDEs, especially comparing the applicable conditions and corresponding convergence rates of various schemes in Table 1.

However, a common limitation of the aforementioned results is their requirement that the coefficients be Lipschitz continuous with respect to the measure. Consequently, research on numerical methods for MV-SDEs that are superlinear with respect to the measure variable is less. Recently, to address this gap, \cite{Soni} proposed an explicit tamed scheme for a specific class of MV-SDEs with a superlinear convolution kernel, achieving a $1/2$-order strong convergence rate. \cite{Zhang}  used the tamed EM scheme to verify the dimension-independent  propagation chaos.
 
 While the aforementioned explicit methods have made significant progress in approximating  MV-SDEs over a finite horizon, research on their long-time dynamic approximations is rare.  In the linear framework, \cite{IMA} utilized the Euler-Maruyama (EM) scheme combined with the random batch method to derive an asymptotic error bound in the $L^2$
 -Wasserstein distance between the numerical solution’s distribution and the invariant measure. Furthermore, \cite{MR4908784} constructed a non-Markovian Euler-type scheme for a one-dimensional MV-SDE, providing weak error bounds for both finite and infinite time horizons.

For MV-SDEs with superlinear growth, recent studies have also begun to address long-time behavior. For example, \cite{arXivBJ} proposed a tamed EM scheme for Langevin-type MV-SDEs and established a uniform-in-time error bound. To ensure numerical stability over long horizons, other works have focused on contractivity. Specifically, \cite{MR4413221} proposed an implicit split-step Euler (ISSE) method for MV-SDEs with superlinear drift in the state variable and proved the mean-square contractivity of the numerical solutions. This approach was later extended by \cite{ChenX} to the more challenging case of superlinear growth in both the state and measure variables.

The study of numerical ergodicity is particularly important, as it implies that the time average of a trajectory converges to its space average, a property with broad applications in optimization and sampling \cite{MR4059375,Ringh}. This has attracted significant attention to the numerical approximation of ergodic behavior. Recently, in a notable contribution, \cite{Soni} obtained the exponential ergodicity of the tamed EM numerical solutions for the MV-SDEs with superlinear measure variables.  

The primary objective of this paper is to develop an explicit, easily implementable scheme for superlinear MV-SDEs that preserves ergodicity. To this end, we derive comprehensive error bounds for the numerical solution against the exact solution over both finite and infinite time horizons, and we also quantify the discrepancy between their respective invariant measures in the $L^{1}$-Wasserstein distance.

   Due to the dependence on the law of the solutions, \cite{MR4260494} revealed that the  classical localization procedure does not carry over directly for MV-SDEs,  since the corresponding stopped process  satisfies a different equation compared with the original solution process. 
By  the propagation of chaos, the IPS is a $d\times M$-dimensional SDE with the interaction term. Using any existing numerical method of  SDEs to deal with the IPS leads to  the convergence behavior depending on the number of particles $M.$ As $M$ increases, the divergence may occur dramatically. This brings a huge challenge to the research of numerical method for MV-SDEs \cite{MR4163850}.  By the interacting  particle system to establish the  numerical theory the main obstacle is how to get the uniform convergence of the numerical solutions with respect to $M$.

 Borrowing the truncation idea from \cite{LXY2019},
 despite these difficulties such as curse of dimensionality and long-time error stacking, we propose an easily implementable scheme for the superlinear growth MV-SDE \eqref{eq3.1}, which degenerates to  the EM scheme in linear case.  {Using the stopping time argument and then applying  the monotonic condition (Assumption \ref{ass2}) to analyze the corresponding stopped process, but extra care is needed, we prove the strong convergence and further derive the optimal $1/2$-order strong convergence rate.}   Furthermore, we show that the numerical solutions capture the long-time  dynamical behaviors of  the original MV-SDE precisely. In summary, the main contributions of this paper are as follows.
 \begin{itemize}
 \item[$\bullet$] To avoid the  ``particle corruption'' effect of the EM solutions approximating the IPS with superlinear coefficients, we set a truncated device  based on the growth rates of the drift and diffusion coefficients.  With the help of the truncation device, we modify the grid points of EM iterations  point by point.   We construct   an auxiliary piecewise continuous  particle process, which  connects the numerical nodes before and after truncation. By the analysis of the auxiliary process, we obtain the moment boundedness of the numerical solutions.

 \item[$\bullet$] 
  Employing the more precise localization  techniques for each particle, we establish the strong convergence of the numerical solutions with a  random initial value, and obtain the optimal $1/2$-order convergence rate under the polynomial growth condition.

\item[$\bullet$] To overcome the difficulty of non-Markovian nature, we construct an appropriate semigroup for the numerical solutions. By constructing the Cauchy sequence,  we demonstrate the existence and uniqueness of the numerical invariant measure and establish  the convergence between the numerical  and  exact invariant measures. We also obtain the uniform  moment boundedness and exponential stability of the numerical solutions.
    
    \item[$\bullet$] By integrating continuous stochastic analysis with discrete iterations, we not only establish the uniform-in-time error between the numerical and exact solutions, but also derive the convergence rate at which the numerical invariant measure converges to the exact invariant measure. 
    
\end{itemize}

The structure of this paper is as follows: Section \ref{n-p} introduces some necessary {notation} and preliminaries.  Section \ref{s-c} proposes an easily implementable explicit scheme for the IPS corresponding to the original MV-SDE and proves the  strong convergence  in the finite horizon. Section \ref{s-cr} analyzes the strong convergence rate of the proposed  scheme.  Section \ref{sec55} explores the asymptotic properties of the numerical solution in the infinite horizon. Section \ref{num} gives numerical examples to validate our theoretical findings.

\section{Notations and Preliminaries}\label{n-p}
Let~$(\Omega,~\mathcal{F},~\mathbb{P}, \{\mathcal{F}_{t}\}_{t\geq0})$~be a complete probability space with a filtration~$\{\mathcal{F}_{t}\}_{t\geq0}$~satisfying the usual conditions (that is, it is right continuous and increasing while~${\mathcal{F}}_{0}$~contains all $\mathbb{P}$-null sets). $\{B_{t}\}_{t\geq0}$ is {an}  $m$-dimensional Brownian motion on probability space $( \Omega,~\mathcal{F},~\mathbb{P}, \{\mathcal{F}_{t}\}_{t\geq0})$. 
 Denote both the Euclidean norm in $\RR^{d}$ and the Frobenius norm in $\RR^{d\times m}$ by $|\cdot|$. For matrix~$A\in \RR^{d\times m}$, denote its transpose by $A^{T}$. For any $a,b\in \RR$,  let~$a\wedge b=\min\{a,b\}$~and~$a\vee b=\max\{a,b\}$. Let C denote a generic positive constant whose value may change in different appearances. 
For a set $G$, let~$I_{G}(x)=1$~if~$x\in G$~and~$0$~otherwise.
 Let $\mathcal{P}(\RR^{d})$ denote the set of all   probability measures on ~$\RR^{d}$. For $q>0$,  define 
$$\mathcal{P}_{q}(\RR^{d}):=\left\{\mu \in \mathcal{P}(\RR^{d}) : \mu(|\cdot|^q)=\int_{\RR^{d}}|x|^q\mu(\mathrm{d}x)<\infty\right\} $$
and   the $L^q$-Wasserstein distance %on $\mathcal{P}_{q}(\RR^{d})$  as
$$
\mathbb{W}_{q}(\mu,\nu)=\inf_{\pi \in \mathcal{C}(\mu,\nu)}\bigg(\int_{\RR^{d}\times \RR^{d}}|x-y|^{q}\pi(\mathrm{d}x,\mathrm{d}y)\bigg)^{\frac{1}{q}},~~\forall \mu, \nu \in \mathcal{P}_{q}(\RR^{d}),
$$
where~$\mathcal{C}(\mu,\nu)$~is the set of all couplings for~$\mu$~and~$\nu$, namely, $\pi\in \mathcal{C}(\mu,\nu)$ if and only if $\pi(\cdot,\RR^{d})=\mu(\cdot)$ and $\pi(\RR^{d}, \cdot)=\nu(\cdot)$.  It is well known that $\mathcal{P}_{q}(\RR^{d})$ is a Polish space under the $L^q$-Wasserstein distance $\mathbb{W}_{q}(\cdot ,\cdot)$. Especially, for any $\mu\in \mathcal{P}_{2}(\RR^{d})$, $\mathbb{W}_2(\mu,\boldsymbol{\delta}_0)=\mu^{\frac{1}{2}}(|\cdot|^2)$, where $\boldsymbol{\delta}_{x}$ represents the Dirac measure concentrated at the point~$x\in\RR^{d}$.
For~$p\in[2,\infty)$,
$L^{p}_{0}$ denotes the family of $\mathcal{F}_{0}$-measurable  $\RR^{d}$-valued random variables $\xi$
 satisfying $\mathbb{E}|\xi|^{p}<\infty$. For a  $\RR^{d}$-valued stochastic process~$Z=\{Z_{t}\}_{t\geq0}$, let $\mathcal{L}_{t}^{Z}$~denote the marginal distribution  of $Z$ at time~$t$.
%For $x =(x_1,\cdots,x_M)\in \RR^{d\times M}$, %where $x_i\in \RR^{d}, 1\leq i\leq M$,
%define
%$$\mathcal{L}^{x,M} =\frac{1}{M}\sum\limits_{i=1}^{M}\boldsymbol{\delta}_{x_i}\in \mathcal{P}(\RR^{d}).$$
%One observes that
%\begin{align}\label{eqp4.8}
%\mathcal{L}^{x,M}(|\cdot|^{2})= \frac{1}{M}\sum_{i=1}^{M}|x_i|^{2}.
%\end{align}
%For any $x=(x_1,\cdots,x_M)\in \RR^{d\times M}$ and $y=(y_1,\cdots,y_M)\in \RR^{d\times M}$, one notices that
%\begin{align}\label{cqp4.9}
%\mathbb{W}^{2}_{2}\big(\mathcal{L}^{x,M},\mathcal{L}^{y,M}\big)\leq \frac{1}{M}\sum_{i=1}^{M}|x_i-y_i|^{2}
%\end{align} due to $$
%\frac{1}{M}\sum\limits_{i=1}^{M}\big(\boldsymbol{\delta}_{x_i}\times\boldsymbol{\delta}_{y_i}\big)\in \mathcal{C}\Big(\mathcal{L}^{x,M},\mathcal{L}^{y,M}\Big).
%$$

 In this paper,  we focus on  the MV-SDE given by \eqref{eq3.1} with initial data $X_0$, where
$ 
f:\RR^{d}\times\mathcal{P}_{2}(\RR^{d})\rightarrow\RR^{d},~ g:\RR^{d}\times\mathcal{P}_{2}(\RR^{d})\rightarrow\RR^{d\times m}. $ 
 {We now give several assumptions on $f$ and $g$.}
\begin{assp}\label{ass1}
 For any $R>0$, there exists a constant $K_{R}>0$ such that
\begin{align}\label{LL2.5}
|f(x_1,\mu)-f(x_2,\mu)|\leq K_{R}|x_1-x_2|,
\end{align}
for any $x_1,x_2\in \RR^{d}$ with $|x_1|\vee|x_2|\leq R$ and $\mu\in \mathcal{P}_{2}(\RR^{d})$. There exists a constant $K>0$ such that
\begin{align*}
|f(x,\mu_1)-f(x,\mu_2)|\leq K\mathbb{W}_{2}(\mu_{1},\mu_2)
\end{align*}
for any $x\in \RR^{d}$ and  $\mu_1,\mu_2\in \mathcal{P}_{2}(\RR^{d})$. 
\end{assp}
\begin{assp}\label{ass2}
There exists a pair of constants  $p_0\geq2$  and $L_1>0$ such that
\begin{align*}
2(x_1-x_2)^{T}\big(f(x_1,\mu_1)-f(x_2,\mu_2)\big)+(p_0-1)\left|g(x_1,\mu_1)-g(x_2,\mu_2)\right|^2\leq L_1\big(|x_1-x_2|^2+\mathbb{W}^2_{2}(\mu_1,\mu_2)\big)
\end{align*}
 for any $x_1,x_2\in \RR^{d}$ and $\mu_1, \mu_2\in \mathcal{P}_{2}(\RR^{d})$.
\end{assp}
\begin{assp}\label{ass3}
There exists a pair of constants $p\geq2$ and $L_2>0$ such that
\begin{align*}
2x^{T}f(x,\mu)+(p-1)|g(x,\mu)|^2\leq L_2(1+|x|^{2}+\mu(|\cdot|^2))
\end{align*}
for any $x\in \RR^{d}$ and $\mu\in \mathcal{P}_{2}(\RR^{d})$.
\end{assp}

\begin{rem}\label{r1}  One observes that Assumptions \ref{ass3}  
 follows from  Assumption \ref{ass2} for any $2\leq p<p_0$. But for the possibly wider range of  $p$  it is still stated.
On the other hand,  Assumptions \ref{ass1} and \ref{ass2} imply that for any $R>0$,
\begin{align}\label{LL2.7}
|g(x_1,\mu)-g(x_2,\mu)|^2\leq C_R|x_1-x_2|^2,~~
|g(x,\mu_1)-g(x,\mu_2)|^2\leq  {L}_3\mathbb{W}^2_{2}(\mu_1,\mu_2)
\end{align}
for any $ x_1,x_2\in \RR^{d}$ with $|x_1|\vee|x_2|\leq R$, $x\in \RR^{d}$ ,~$\mu, \mu_1, \mu_2\in \mathcal{P}_{2}(\RR^{d})$, where $C_R=(L_1+2K_R)/(p_0-1)$ and  $ {L}_3=L_1/(p_0-1)$.
\end{rem}

{\begin{rem} 
Although Assumptions 1-3 do not require specific functional forms for $f(x,\mu)$ and $g(x,\mu)$, they are in fact compatible with various types of linear interactions between the state variable and the measure variable, such as a linear type kernel of the form 
$$f(x,\mu)=\hat{f}\Big(x,\int_{\RR^{d}}h_1(x,z)\mu(\mathrm{d}z)\Big),~~g(x,\mu)=\hat{g}\Big(x,\int_{\RR^{d}}h_2(x,z)\mu(\mathrm{d}z)\Big).$$ 
where functions $\hat{f}:\RR^{d}\times \RR^{d}\rightarrow\RR^{d}$, $\hat{g}:\RR^{d}\times \RR^{d}\rightarrow\RR^{d\times m}$ and $h_1, h_2:\RR^{d}\times \RR^{d}\rightarrow \RR^{d}$ satisfy the following assumptions.  
\begin{itemize}
\item[(H1)] For any $R>0$, there exists a constant $\bar{K}_{R}>0$ and $\bar{L}>0$ such that for any $x_1, x_2 \in \RR^{d}$ with $|x_1|\vee|x_2|\leq R$ and $y_1, y_2\in \RR^{d}$,
\begin{align*}
&\big|\hat{f}(x_1,y_1)-\hat{f}(x_2,y_2)\big|\leq \bar{K}_{R}|x_1-x_2|+\bar{L}|y_1-y_2|,\\
&\big|\hat{g}(x_1,y_1)-\hat{g}(x_2,y_2)\big|^2\leq \bar{K}_{R}|x_1-x_2|^2+\bar{L}|y_1-y_2|^2.
\end{align*}
Furthermore, there exists a constant $\bar{K}>0$ such that for any $x_1,x_2,y_2,y_2\in \RR^{d}$,
\begin{align*}
|h_1(x_1,y_1)-h_1(x_2,y_2)|\vee|h_2(x_1,y_1)-h_2(x_2,y_2)|\leq \bar{K}(|x_1-x_2|+|y_1-y_2|).
\end{align*}
\item[(H2)] There exists a pair of constants  $p_0\geq2$  and $\bar{L}_1>0$ such that  for any   $x_1,x_2,y_2,y_2\in \RR^{d}$,
\begin{align*}
2(x_1-x_2)^{T}\big(\hat{f}(x_1,y_1)&-\hat{f}(x_2,y_2)\big)+(p_0-1)\left|\hat{g}(x_1,y_1)-\hat{g}(x_2,y_2)\right|^2\nn\
\\&\leq \bar{L}_1\big(|x_1-x_2|^2+\mathbb{W}^2_{2}(\mu_1,\mu_2)\big).
\end{align*}
\item[(H3)] 
There exists a pair of constants $p\geq2$ and $\bar{L}_2>0$ such that for any $x,y\in \RR^{d}$,
\begin{align*}
2x^{T}\hat{f}(x,y)+(p-1)|\hat{g}(x,y)|^2\leq \bar{L}_2(1+|x|^{2}+|y|^2).
\end{align*}

\end{itemize}
\end{rem}}

{Drawing upon the proof methodologies of the well-posedness of the solutions for MV-SDEs in \cite[Theorems 3.1, 3.2]{Ren} and MV-SDEs with common noise in \cite[Theorem 2.1]{MR4497846}, we obtain the well-posedness  and the moment {boundedness} of the solutions to MV-SDE \eqref{eq3.1}. To avoid duplication we omit the proof.}
\begin{lemma}\label{r3.3} %[{{\cite[Theorem 2.1]{MR4497846}}}]
Let Assumptions \ref{ass1}-\ref{ass3} and  $X_0\in L^{p}_{0}$ hold.  \eqref{eq3.1} has a unique strong solution $X_t$
on $[0,\infty)$
such that
\begin{align*}
\sup_{t\in[0,T]}\mathbb{E}|X_{t}|^{p}\leq C_{T},~~~\forall T\geq0.
\end{align*}
\end{lemma}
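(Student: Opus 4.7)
The plan is to combine a Picard-type fixed-point argument on the space of measure flows with an Itô-plus-Gronwall moment estimate. First I would freeze the measure flow: for fixed $T>0$, work in the Polish space $\mathcal{M}_T := C([0,T];\mathcal{P}_{2}(\RR^{d}))$ equipped with the uniform $\mathbb{W}_2$-metric, and for each $\mu=(\mu_t)_{t\in[0,T]}\in\mathcal{M}_T$ consider the classical SDE
\begin{equation*}
\mathrm{d}X^{\mu}_{t}=f(X^{\mu}_{t},\mu_{t})\,\mathrm{d}t+g(X^{\mu}_{t},\mu_{t})\,\mathrm{d}B_{t},\quad X^{\mu}_{0}=X_{0}.
\end{equation*}
Under the local Lipschitz assumption in $x$ (Assumption \ref{ass4}, plus \eqref{LL2.7}) and the Khasminskii-type growth condition (Assumption \ref{ass2}), standard SDE theory yields a unique strong solution $X^{\mu}$ with finite $p$-th moment. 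This defines a self-map $\Phi:\mu\mapsto \mathcal{L}^{X^{\mu}}$ on $\mathcal{M}_T$.

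To produce a fixed point, I would apply It\^o's formula to $|X^{\mu}_t-X^{\nu}_t|^{2}$ and invoke Assumption \ref{ass3} (with $p_0$ absorbing the stochastic-integral term in the usual fashion) to obtain
\begin{equation*}
\E|X^{\mu}_{t}-X^{\nu}_{t}|^{2}\leq L_{1}\int_{0}^{t}\bigl(\E|X^{\mu}_{s}-X^{\nu}_{s}|^{2}+\mathbb{W}_{2}^{2}(\mu_{s},\nu_{s})\bigr)\mathrm{d}s.
\end{equation*}
Since $\mathbb{W}_{2}^{2}(\mathcal{L}^{X^{\mu}}_{t},\mathcal{L}^{X^{\nu}}_{t})\leq \E|X^{\mu}_{t}-X^{\nu}_{t}|^{2}$, Gronwall yields contraction of $\Phi$ on a sufficiently small sub-interval $[0,T_{*}]$; patching intervals of uniform length $T_{*}$ gives global existence and uniqueness on $[0,T]$ and hence on $[0,\infty)$.

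For the moment estimate, I would apply It\^o's formula to $|X_{t}|^{p}$ and control the cross-variation term by $|X_{t}|^{p-2}|X_{t}^{T}g(X_{t},\mathcal{L}^{X}_{t})|^{2}\leq |X_{t}|^{p-2}|g|^{2}|X_{t}|^{2}$, producing the bracket
\begin{equation*}
\mathrm{d}|X_{t}|^{p}\leq \tfrac{p}{2}|X_{t}|^{p-2}\bigl[2X_{t}^{T}f(X_{t},\mathcal{L}^{X}_{t})+(p-1)|g(X_{t},\mathcal{L}^{X}_{t})|^{2}\bigr]\mathrm{d}t+\mathrm{d}M_{t}.
\end{equation*}
Applying Assumption \ref{ass2}, Young's inequality, and the Jensen bound $\mu(|\cdot|^{2})^{p/2}\leq \mu(|\cdot|^{p})$ (so that $\E[\mathcal{L}^{X}_{s}(|\cdot|^{2})^{p/2}]\leq \E|X_{s}|^{p}$), I would arrive at $\E|X_{t}|^{p}\leq \E|X_{0}|^{p}+C\int_{0}^{t}(1+\E|X_{s}|^{p})\,\mathrm{d}s$, and Gronwall closes it. A localization by stopping times $\tau_{R}:=\inf\{t:|X_{t}|\geq R\}$ with Fatou's lemma handles the integrability needed to legitimize the supermartingale step.

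The principal obstacle is the fixed-point step: with a super-linear drift one does not have a global Lipschitz estimate on $\Phi$, and a naive attempt to contract $\mathcal{M}_T$ in a single step fails for large $T$. Either the local-then-paste strategy above (whose key point is that $T_{*}$ depends only on $L_{1}$, not on the initial law) or, alternatively, an exponentially weighted metric $d^{\lambda}_{T}(\mu,\nu):=\sup_{t\leq T}e^{-\lambda t}\mathbb{W}_{2}(\mu_{t},\nu_{t})$ with $\lambda$ large is needed to absorb the Gronwall factor.
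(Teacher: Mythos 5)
The paper offers no proof of this lemma at all: it is quoted directly from \cite[Theorem 2.1]{MR4497846}, so there is no in-paper argument to compare against. Your sketch is essentially the standard proof used in that reference and in Wang's work on distribution-dependent SDEs: freeze the measure flow, solve the resulting classical SDE, show that $\Phi:\mu\mapsto\mathcal{L}^{X^{\mu}}$ contracts on $C([0,T_{*}];\mathcal{P}_{2}(\RR^{d}))$ with $T_{*}$ depending only on $L_{1}$ (or use an exponentially weighted metric), paste intervals, and then run the It\^o--Gronwall estimate for the $p$-th moment using $(\mu(|\cdot|^{2}))^{p/2}\le\mu(|\cdot|^{p})$ together with a stopping-time localization and Fatou. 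The main steps are sound, including the key observation that the contraction horizon does not depend on the initial law, and the coupling inequality $\mathbb{W}_{2}^{2}(\mathcal{L}^{X^{\mu}}_{t},\mathcal{L}^{X^{\nu}}_{t})\le\E|X^{\mu}_{t}-X^{\nu}_{t}|^{2}$ that closes the fixed-point loop.

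One caveat worth flagging: the lemma assumes only Assumptions \ref{ass3} and \ref{ass2}, but your well-posedness step for the frozen SDE invokes Assumption \ref{ass4} and the derived bound \eqref{LL2.7}. Under Assumptions \ref{ass3} and \ref{ass2} alone the coefficients need not even be continuous in $x$ (the one-sided condition controls only a signed combination of the increments of $f$ and $g$), so some continuity or local Lipschitz hypothesis is genuinely required for the frozen equation to have a strong solution; the cited theorem in \cite{MR4497846} imposes such a condition, and the paper's restatement of the lemma is arguably eliding it. Your import of Assumption \ref{ass4} is therefore not a logical error so much as the use of a hypothesis the lemma does not formally grant, and it should be stated explicitly if you write the argument out in full.
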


Compared with SDEs, the key point to discrete the MV-SDEs is to  approximate the marginal distribution law $\mathcal{L}_{t}^{X}$. Inspired by the propagation of chaos \cite{SAS1991,LD2018}  the empirical law  of  the interacting particles  
is a good candidate to approximate the  original  distribution. For the completeness, we introduce the corresponding theory on the propagation of chaos.

For any integer  $M\geq 1$, let  $(\{B^{i}_{t}\}_{t\geq0},~X^{i}_0)~ ( i=1,\cdots, M)$  be mutually  independent copies of $\left(\{B_{t}\}_{t\geq0},X_{0}\right)$ on the same probability space $(\Omega,\mathcal{F},\mathbb{P}, \{\mathcal{F}_{t}\}_{t\geq0})$.   Define the  interacting particle  system (IPS)
\begin{equation}\label{eq2}
\left\{
\begin{array}{l}
\mathrm{d}X^{i,M}_{t}=f\left(X^{i,M}_{t},\mathcal{L}_{t}^{X,M}\right)\mathrm{d}t+g\left(X^{i,M}_{t},\mathcal{L}_{t}^{X,M}\right)
\mathrm{d}B^{i}_{t},~~~t\geq0,\\
X^{i,M}_{t}|_{t=0}=X^{i}_{0},~~~i=1,2,\cdot\cdot\cdot,M,
\end{array}
\right.
\end{equation}
where
\begin{equation}\label{eq+1} \mathcal{L}^{X,M}_{t}=\frac{1}{M}\sum_{i=1}^{M}\boldsymbol{\delta}_{X^{i,M}_{t}}. \end{equation} 
Owing to the symmetric structure of  the IPS %is a symmetric system with $M$ SDEs coupled in a mean-field sense, 
all particles are identically distributed.  Given that the IPS is  an $\RR^{dM}$-dimensional  SDE,  we know that \eqref{eq2} is strongly well-posed under Assumptions \ref{ass1} and \ref{ass3} for  $X_0\in {  L^{p}_{0}}$,
referring to \cite[p.58, Theorem 3.5]{MR2380366}. Due to the symmetry,  using the H$\ddot{\hbox{o}}$lder inequality one observes that 
 \begin{align*}%\label{eq*X}
 \mathbb{E}\Big(\mathcal{L}^{X,M}_{t }(|\cdot|^2) \Big)^{\frac{p}{2}}\leq  \mathbb{E} |X^{i,M}_{t }|^p,~~t\geq 0 ,~~i=1,\cdots, M.
 \end{align*} Moreover, we can prove that the unique strong solution is bounded in the $p$th moment and estimate the  finite-time escape probability from a ball.  Since the proof is routine we omit it to avoid redundancy.
\begin{lemma}\label{le2.3}
Let Assumptions \ref{ass1}-\ref{ass3} and $X_0\in L^{p}_{0}$  hold. Then, for any $T>0$, ~~ 
$
\sup_{M\geq 1}\sup_{1\leq i\leq M}\sup_{0\leq t\leq T}\mathbb{E}|X^{i,M}_{t}|^{p}\leq C_{T}. 
$
Furthermore, for any $M\geq 1$, $1\leq i\leq M$ and $R>0$, define the stopping time
\begin{align}\label{eql2.9}
\tau^{i,M}_{R}=\inf\left\{t\geq0: |X^{i,M}_{t}|\geq R\right\}.
\end{align}
Then for any $T>0$,~~
$\displaystyle 
\mathbb{P}\left(\tau^{i,M}_{R}\leq T\right)\leq \frac{C_{T}}{R^{p}}.
$
\end{lemma}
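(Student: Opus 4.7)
The plan is to prove both claims by an It\^o-moment estimate for $|X^{i,M}_t|^p$ combined with the exchangeability of the particle system, followed by a classical Chebyshev-type hitting-time argument. Because the It\^o stochastic integral is a priori only a local martingale and the coefficients grow super-linearly, I work throughout with the stopped process $X^{i,M}_{t\wedge\tau^{i,M}_R}$ and pass to the limit $R\to\infty$ at the end.

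First I apply It\^o's formula to $|X^{i,M}_{t\wedge\tau^{i,M}_R}|^p$ (admissible since $p>2$ makes $x\mapsto|x|^p$ of class $C^2$). The stochastic integral becomes a true martingale with zero expectation, as its integrand is bounded on $[0,\tau^{i,M}_R]$. The remaining drift equals
$$\tfrac{p}{2}|X^{i,M}_s|^{p-2}\bigl[2(X^{i,M}_s)^T f(X^{i,M}_s,\mathcal{L}^{X,M}_s)+(p-1)|g(X^{i,M}_s,\mathcal{L}^{X,M}_s)|^2\bigr],$$
which Assumption \ref{ass2} bounds by $\tfrac{pL_2}{2}|X^{i,M}_s|^{p-2}(1+|X^{i,M}_s|^2+\mathcal{L}^{X,M}_s(|\cdot|^2))$. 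Using \eqref{eqp4.8} to rewrite $\mathcal{L}^{X,M}_s(|\cdot|^2)=\tfrac{1}{M}\sum_{j=1}^M|X^{j,M}_s|^2$, then applying Young's inequality (to split $|X^{i,M}_s|^{p-2}$ against $1$ and against $\mathcal{L}^{X,M}_s(|\cdot|^2)$ with conjugate exponents $p/(p-2)$ and $p/2$) and Jensen's inequality (on the convex map $x\mapsto x^{p/2}$, valid since $p>2$), I obtain the pointwise bound $C\bigl(1+|X^{i,M}_s|^p+\tfrac{1}{M}\sum_{j=1}^M|X^{j,M}_s|^p\bigr)$.

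Taking expectations and invoking the exchangeability of the IPS --- which holds because $(X^i_0,B^i)_{1\le i\le M}$ are i.i.d.\ and every particle obeys the same SDE driven by the common empirical measure --- yields $\mathbb{E}|X^{j,M}_s|^p=\mathbb{E}|X^{i,M}_s|^p$ for all $j$, so the mean-field sum collapses. This gives the closed scalar inequality
$$\mathbb{E}|X^{i,M}_{t\wedge\tau^{i,M}_R}|^p\leq\mathbb{E}|X_0|^p+C\int_0^t\Bigl(1+\mathbb{E}|X^{i,M}_{s\wedge\tau^{i,M}_R}|^p\Bigr)\,\mathrm{d}s,$$
to which Gronwall's lemma gives a bound $C_T$ independent of $R$ (and, by symmetry, of $i$); Fatou's lemma as $R\to\infty$ delivers the first claim. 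For the second, specialize the stopped estimate to $t=T$: path-continuity forces $|X^{i,M}_{\tau^{i,M}_R}|=R$ on $\{\tau^{i,M}_R\leq T\}$, whence $R^p\mathbb{P}(\tau^{i,M}_R\leq T)\leq\mathbb{E}|X^{i,M}_{T\wedge\tau^{i,M}_R}|^p\leq C_T$.

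The main obstacle is the closure of the mean-field term: without exchangeability, one would only have a coupled system of inequalities for the vector $(\mathbb{E}|X^{i,M}_s|^p)_{1\leq i\leq M}$, and a naive Gronwall bound could carry a constant growing in $M$. Exchangeability is precisely what makes the bound uniform in $i$ and in $M$ and allows the scalar Gronwall argument to go through; the rest of the proof is routine.
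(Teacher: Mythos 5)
Your argument is correct and follows essentially the same route as the paper's own proof: It\^o's formula for $|x|^p$, Assumption \ref{ass2} plus Young's inequality to reach a bound involving $\frac{1}{M}\sum_j\mathbb{E}|X^{j,M}_s|^p$, exchangeability to collapse that sum, Gronwall, and then Chebyshev on the stopped process for the exit-time estimate. Your version is if anything slightly more careful than the paper's (which takes expectations of the local martingale directly without localizing); the only residual technicality in both treatments is that stopping at $\tau^{i,M}_R$ alone does not bound the empirical-measure argument of $g$, so a fully rigorous localization should use $\min_{1\le j\le M}\tau^{j,M}_R$ or a generic localizing sequence before letting $R\to\infty$.
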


 Also, consider the non-interacting particle system {(N-IPS)}
\begin{equation}\label{eq3}
\left\{
\begin{array}{l}
\mathrm{d}X^{i}_{t}=f(X^{i}_{t},\mathcal{L}^{X^{i}}_{t})\mathrm{d}t+g(X^{i}_{t},\mathcal{L}^{X^{i}}_{t})\mathrm{d}B^{i}_{t},~~~\forall t\geq0,\\
X^{i}_{t}|_{t=0}=X_0^i,~~~i=1,2,\cdot\cdot\cdot,M.
\end{array}
\right.
\end{equation}
 Obviously, $ \mathcal{L}^{X^i}_{t}=\mathcal{L}^{X}_{t},$   $i=1,2,\cdot\cdot\cdot,M$, for any $ t\geq0$. The propagation of chaos indicates that the single particle  in IPS (\ref{eq2}) tends to be independent and  converges to the  solution of  the MV-SDE, when the original MV-SDE (\ref{eq3.1}) is viewed as a single particle in the {N-IPS}. It is the basis for constructing an appropriate numerical scheme for the IPS and establishing the convergence via  the stochastic particle method. We next  recall a useful result from \cite{Fournier}, followed by a presentation of the propagation of chaos.
 
\begin{lemma}[{{\cite[Theorem 1]{Fournier}}}]\label{Lem3.1}
Let   $\left\{Z_m\right\}_{m\geq 1}$ be an independent and identically distributed (i.i.d.) sequence of random variables with common distribution $\mu\in \mathcal{P}_{\tilde{q}}(\RR^d)$ with $\tilde{q}>2$. Then for  any $q:
 ~2\leq q<\tilde{q} $, there exists a constant $C_{q, \tilde{q}, d}$ such that for all $M \geq 1$, $$\mathbb{E}\left(\mathbb{W}_q^q\left(\mathcal{L}^{Z,M}, \mu\right)\right)\leq C_{q, \tilde{q}, d}  \mu(|\cdot|^{\tilde{q}})^{\frac{q}{\tilde{q}}} \Upsilon_{M,q,\tilde{q},d},$$ where $\mathcal{L}^{Z,M}=\frac{1}{M} \sum_{m=1}^M \boldsymbol{\delta}_{Z_m},$ and
$$
\Upsilon_{M,q,\tilde{q},d}:=\begin{cases}M^{-1 / 2}+M^{-(\tilde{q}-q) / \tilde{q}}, &\text { if }  q>d / 2 \text { and } \tilde{q} \neq 2 q, \\ M^{-1 / 2} \log (1+M)+M^{-(\tilde{q}-q) / \tilde{q}}, & \text { if } q=d / 2 \text { and } \tilde{q} \neq 2 q, \\ M^{-q / d}+M^{-(\tilde{q}-q) / \tilde{q}}, &\text { if } 2 \leq q<d / 2~\text{and}~\tilde{q}\neq\frac{d}{d-q}.\end{cases}
$$
\end{lemma}

{Classical propagation of chaos results over finite time horizons under superlinear growth conditions have been well established in  literatures, such as \cite[Proposition 3.1]{Biswas}, \cite[Proposition 2.5]{MR4727110} and \cite[Proposition 1]{MR4497846}. These results  
primarily hold within a second-moment framework. To establish  the strong convergence of the numerical method  we need  a propagation of chaos   in the  higher-order moment sense. 
By the similar techniques as \cite[Theorem 3.2]{MR4667613},  together with  Lemma \ref{Lem3.1}, we yield the error between the IPS and the N-IPS in the $L^q$ sense as follows. The proof employs standard techniques and is thus omitted for brevity.}
\begin{lemma}[{{\cite[Theorem 3.2]{MR4667613}}}]
\label{lec3.4}
Let Assumptions \ref{ass1}-\ref{ass3} hold with $p>2$ and  $X_0\in L^{p}_{0}$. Then for any $q\in [2,p)\wedge[2,p_0]$ and $T>0$,  there exists a constant $C(=C_{q, p, d, T})$ such that
\begin{align}\label{eqcc3.3}
\sup_{1\leq i\leq M}\sup_{0\leq t\leq T}\mathbb{E}\big|X^{i}_{t}-X^{i,M}_{t}\big|^{q}\leq C \Upsilon_{M,q,p,d},
\end{align}
and
\begin{align*}%\label{eqcc3.4}
\sup_{1\leq i\leq M}\sup_{0\leq t\leq T}\mathbb{E}\Big[\mathbb{W}^{q}_{q}\big(\mathcal{L}^{X^i}_{t},\mathcal{L}^{X,M}_{t}\big)\Big]\leq C \Upsilon_{M,q,p,d}.
\end{align*}

\end{lemma}

\section{ TEM Scheme and Strong Convergence }\la{s-c}

This section aims to construct an easily implementable explicit scheme for the IPS and to establish the theory on the strong convergence between the numerical solution and the exact solution of IPS. Then, the desired convergence between the numerical and the exact  solutions of the original MV-SDEs, as one single particle of the corresponding {N-IPS}, follows from the propagation of chaos Lemma \ref{lec3.4} directly.

 To adapt the superlinear drift and diffusion, we introduce a truncation device. Precisely,
owing to \eqref{LL2.5} and \eqref{LL2.7}, we can choose a strictly increasing continuous function~$\varphi: \RR_{+}\rightarrow\RR_{+}$ satisfying $\varphi(u)\rightarrow\infty$ as $u\rightarrow\infty$ such that
\begin{align*}%\label{eql3.1}
\sup_{\substack{
|x_1|\vee|x_2|\leq u\\
x_1\neq x_2}}\frac{|f(x_1,\mu)-f(x_2,{ \mu})|}{|x_1-x_2|}\vee\frac{|g(x_1,\mu)-g(x_2,{ \mu})|^2}{|x_1-x_2|^2}\leq \varphi(u),~~~ \forall u>0 
\end{align*} for $x_1,x_2\in \RR^{d}$ and $\mu \in \mathcal{P}_{2}(\RR^{d})$.
 For any given~$\Delta\in(0,1]$, define the truncation  mapping
\begin{align}\label{eqq4.1}
\pi_{\Delta}(x)=\displaystyle\left\{\begin{array}{lcl} x,~~&~ |x|\leq \varphi^{-1}(H\Delta^{-\kappa}),\\
\displaystyle \varphi^{-1}(H\Delta^{-\kappa}) \frac{x}{|x|},~~&~ |x|> \varphi^{-1}(H\Delta^{-\kappa}) 
\end{array}\right.
\end{align}
for $x\in \RR^d$, where $\varphi^{-1} $ %; [\phi(0),\infty)\rightarrow (0,\infty)$} 
is the inverse function of $\varphi$,  the constants $\kappa\in (0, 1/3]$ and $H\geq 1 \vee  K\vee L_3\vee |f(0,\boldsymbol{\delta}_0)|\vee|g(0,\boldsymbol{\delta}_0)|^2$.

\begin{rem} \label{rm3.1}One observes that for any  $x_1,x_2\in \RR^{d}$  and $\mu \in \mathcal{P}_{2}(\RR^{d})$,
\begin{align*}%\label{eqq4.5}
\big|f(\pi_{\Delta}(x_1),\mu)-f(\pi_{\Delta}(x_2),\mu)\big|\leq  H\Delta^{-\kappa}|\pi_{\Delta}(x_1)-\pi_{\Delta}(x_2)|\leq  H\Delta^{-\kappa}|x_1-x_2|,
\end{align*}
and
\begin{align*}%\label{eqqc4.3}
\big|g(\pi_{\Delta}(x_1),\mu)-g(\pi_{\Delta}(x_2),\mu)\big|^2\leq H\Delta^{-\kappa} |\pi_{\Delta}(x_1)-\pi_{\Delta}(x_2)|^2\leq H\Delta^{-\kappa} |x_1-x_2|^2.
\end{align*}
Moreover, for any $x\in \RR^{d}$ and $\mu\in \mathcal{P}_2(\RR^{d})$, 
\begin{align}\label{eqc4.4}
\big|f(\pi_{\Delta}(x),\mu)\big|&\leq \big|f(\pi_{\Delta}(x),\mu)-f(0, \mu)\big|+\big|f(0, \mu)-f(0,\boldsymbol{\delta}_0)\big|+ { \big|f(0,\boldsymbol{\delta}_0)\big|} \nn\
\\&\leq H\big(1+\Delta^{-\kappa} |\pi_{\Delta}(x)|+ \mathbb{W}_{2}(\mu,\boldsymbol{\delta}_0)\big)  
 \leq H\big(1+\Delta^{-\kappa}|\pi_{\Delta}(x)|+\mu^{\frac{1}{2}}(|\cdot|^2) \big),
\end{align}
and
\begin{align}\label{eqc4.5}
\big|g(\pi_{\Delta}(x),\mu)\big|^2\leq 3H \big(1+\Delta^{-\kappa}|\pi_{\Delta}(x)|^2+\mu(|\cdot|^2) \big).
\end{align}
\end{rem}

Now, we propose the TEM scheme as follows.
For   $\Delta\in(0,1]$, 
\begin{equation}\label{eq2.1}
\begin{cases}\bar Y_{0}^{i,M}= X_{0}^{i}, \quad i=1,\cdots ,M,\\
	Y_{t_{k }}^{i,M}=\pi _{\Delta}(\bar{Y}_{t_{k }}^{i,M}),~ ~ 
	\mathcal{L} _{t_k}^{Y,M}=\displaystyle \frac{1}{M}\sum_{i=1}^M{\boldsymbol{\delta} _{Y_{t_k}^{i,M}}},~~ k=0,1,\cdots,\\
	\bar{Y}_{t_{k+1}}^{i,M}=Y_{t_k}^{i,M}+f(Y_{t_k}^{i,M},\mathcal{L} _{t_k}^{Y,M})\Delta +g(Y_{t_k}^{i,M},\mathcal{L} _{t_k}^{Y,M})\Delta B_{t_k}^{i}, 
\end{cases}
\end{equation}
where~$t_{k}=k\Delta $,  $\Delta B^{i}_{t_k}=B^{i}_{t_{k+1}}-B^{i}_{t_{k}}$.
   By virtue of  \eqref{eqc4.4} and \eqref{eqc4.5} this scheme has the properties
\begin{align}\label{eq2.2}
\big|f(Y^{i,M}_{t_k},\mathcal{L}_{t_k}^{Y,M})\big|\leq H\Big(1+\Delta^{-\kappa}|Y^{i,M}_{t_k}|+
\big(\mathcal{L}_{t_k}^{Y,M}(|\cdot|^2)\big)^{\frac{1}{2}} \Big),
\end{align}
and
\begin{align}\label{eq2.3}
\big|g(Y^{i,M}_{t_k},\mathcal{L}^{Y,M}_{t_k})\big|^{2}\leq 3H \left(1+\Delta^{-\kappa}|Y^{i,M}_{t_k}|^2+\mathcal{L}^{Y,M}_{t_k}(|\cdot|^2) \right).
\end{align}
Furthermore, define the  TEM  numerical solutions as
\begin{align}\label{3.9}
 \bar{Y}^{i,M}_{t} =\bar{Y}^{i,M}_{t_k} ,~~~~ {Y}^{i,M}_{t} = {Y}^{i,M}_{t_k},~~\forall ~t\in[t_{k},t_{k+1}), ~~   i=1,\cdots, M.
\end{align}
Due to the symmetry, the  distributions of $Y_{t_k}^{i, M}, ~i=1,\cdots, M$, are identical.
Using the H$\ddot{\hbox{o}}$lder inequality one observes that for any fixed $p\geq 2,$
 \begin{align}\label{eq*Y}
 \mathbb{E}\Big(\mathcal{L}^{Y,M}_{t_k}(|\cdot|^2) \Big)^{\frac{p}{2}}\leq \E \Big(\frac{1}{M}\sum_{i=1}^{M} |Y^{i,M}_{t_k}|^{2}\Big)^{\frac{p}{2}}\leq \mathbb{E} |Y^{i,M}_{t_k}|^p,~~t\geq 0 ,~~i=1,\cdots, M.
 \end{align}    
   
   {To illustrate how to choose appropriate constants $H,~\kappa$ and function $\varphi$ and how to construct  the TEM scheme, we recall  the MV-SDE \eqref{Ne4} with
$$f(x,\mu)=x(-2-|x|)+\int x\mu(\mathrm{d}x), ~~~g(x,\mu)=|x|^{3/2}/2.$$ 
Obviously, Assumption \ref{ass1} is satisfied with $K=1$. For any $  x_1, x_2\in \RR$ and $\mu \in \mathcal{P}_{2}(\RR)$, we compute  
\begin{align*}%\label{eql3.1}
\sup_{\substack{
|x_1|\vee|x_2|\leq u\\
x_1\neq x_2}}\frac{|f(x_1,\mu)-f(x_2,{ \mu})|}{|x_1-x_2|}\leq \frac{2|  x_1-x_2 |+\Big|x_1|x_1|-x_2|x_2|\Big|}{|x_1-x_2|} \leq 2(1+u) ,~~~ \forall u>0, 
\end{align*} and \begin{align*}%\label{eql3.1}
\sup_{\substack{
|x_1|\vee|x_2|\leq u\\
x_1\neq x_2}}  \frac{|g(x_1,\mu)-g(x_2,{ \mu})|^2}{|x_1-x_2|^2}\leq \frac{\Big( |x_1|^{\frac{1}{2}}- |x_2|^{\frac{1}{2}}\Big)^2 \Big( |x_1|^{\frac{1}{2}}+ |x_2|^{\frac{1}{2}}\Big)^4}{4|x_1-x_2|^2} \leq u ,~~~ \forall u>0. 
\end{align*}  Thus, we may choose a function $\varphi(u)=4(1+u)> 2(1+u)\vee u,~u>0.$ 
One notes that Assumptions \ref{ass2} and \ref{ass3} hold with  $ p_0= 25/9$, $p= 9$, and $L_1=L_2=1$, $L_3=9/16$, respectively. $f(0,\boldsymbol{\delta}_0)=g(0,\boldsymbol{\delta}_0)=0$. 
 Then we may choose constants   $  \kappa= {1}/{3}\in (0,  {1}/{3}],~~H=50\geq 1,  $ and compute 
\begin{align*}
\pi_{\Delta}(x)=\displaystyle\left\{\begin{array}{lcl} x,~~&~ |x|\leq 12.5\Delta^{-\frac{1}{3}} -1,\\
\displaystyle \big(12.5\Delta^{-\frac{1}{3}} -1\big)\frac{x}{|x|},~~&~ |x|>  12.5\Delta^{-\frac{1}{3}} -1.
\end{array}\right.
\end{align*}
Thus, the TEM scheme for the MV-SDE \eqref{Ne4} is described by
\begin{equation} \label{eq7.3}
\begin{aligned}
\left\{
\begin{array}{rl}
\bar{Y}^{i,M}_{0}&=X^{i}_0, ~~i=1,2,\cdots,M,\\
Y^{i,M}_{t_{k}}&=\pi_{\Delta}(\bar{Y}^{i,M}_{t_{k}}), ~~\mathcal{L} _{t_k}^{Y,M}=\displaystyle \frac{1}{M}\sum_{i=1}^M{\boldsymbol{\delta} _{Y_{t_k}^{i,M}}},~~ k=0,1,\cdots,\\
\bar{Y}^{i,M}_{t_{k+1}}&=Y^{i,M}_{t_k}+Y^{i,M}_{t_k}\Big(-2-\big|Y^{i,M}_{t_k}\big|+
\frac{1}{M}\sum\limits_{j=1}^{M}Y^{j,M}_{t_k}\Big)\Delta+\frac{1}{2}\big|Y^{i,M}_{t_k}\big|^{\frac{3}{2}} \Delta B^{i}_{t_k}.
\end{array}
\right.
\end{aligned}
\end{equation} }

\begin{rem} 
If
 the coefficients $f(x, \mu )$ and $g(x, \mu )$ are globally Lipschitz continuous with respect to  $x$, then one may take $\varphi(u)\equiv L$ (the global Lipschitz constant) and  $ \varphi^{-1}(u)=+\infty$. Thus, $\pi _{\Delta}(x)=x$ for any $\Delta\in (0, 1]$, $x\in \RR^{d}$, which implies that the TEM scheme \eqref{eq2.1} becomes  the standard EM scheme. So, the EM scheme is a special case of the TEM.
\end{rem}

\begin{rem}
Compared with the tamed numerical methods \cite{arXivBJ,MR4302574,MR4497846} which directly modify the coefficient terms, our scheme  only adjusts the inappropriate grid point values of the EM iteration by the truncation mapping. By this scheme the numerical solutions preserve the geometric structure of the coefficients  (Assumptions \ref{ass5} and \ref{ass6}) perfectly.  
However, the pointwise nature of this correction method precludes a continuity-based proof of our main results such as using the  It\^o formula directly, compelling us to introduce a new  analysis method based on piecewise continuity. Successive applications of the  It\^o formula yields a recursive difference inequality; solving it implies that the numerical solutions rigorously preserve the  same dynamic properties as the exact ones.  
\end{rem}

 Our main aim is  to establish the convergence theory of the  TEM scheme in both finite  and infinite horizons. We first establish the boundedness of moments of the TEM numerical solutions for the IPS in finite horizon. In fact,  the IPS is $M\times d$ dimensional and  depends on the empirical distribution of particles.  These  bring  us essential difficulties in the moment estimation of the TEM numerical solutions.  
To analyze the moment boundedness of $Y^{i,M}_{t_k}$ itself, we {introduce  \eqref{eq4.35}
as the continuous time scheme}
\begin{align}\label{eq4.35}
%把tilde改成hat?
\tilde{Y}^{i,M}_{t}&=Y^{i,M}_{t_k}+\int_{t_{k}}^{t}f(Y^{i,M}_{t_k},\mathcal{L}_{t_k}^{Y,M})\mathrm{d}t+\int_{t_{k}}^{t}g(Y^{i,M}_{t_k},\mathcal{L}_{t_k}^{Y,M})\mathrm{}\mathrm{d}B^{i}_{t},~~\forall t\in[t_k, t_{k+1}),
\end{align} which is right continuous and has the left limit at each grid point, that is, 
\begin{align}\label{eqcc3.37}
\lim_{t\rightarrow t^{+}_{k+1}}\tilde{Y}^{i,M}_{t}=\tilde{Y}^{i,M}_{t_{k+1}}=Y^{i,M}_{t_{k+1}},~~~~\lim_{t\rightarrow t^{-}_{k+1}}\tilde{Y}^{i,M}_{t}=\bar{Y}^{i,M}_{t_{k+1}},~~k=0,1,2, \cdots~~\mathrm{a.s.}
\end{align} 
 Thus, { $\tilde{Y}^{i,M}_t$}  becomes a bridge between   $Y^{i,M}_{t_{k }}$ and $\bar{Y}^{i,M}_{t_{k }}$.
 { By the continuity of the auxiliary process $\tilde{Y}^{i,M}_{t}$ in each interval $[t_{k},t_{k+1})$,  we can  
  obtain the bounds of   $\mathbb{E}|Y^{i,M}_{t_{k}}|^{p}$ and $\mathbb{E}|\bar{Y}^{i,M}_{t_{k}}|^{p}$.} % by solving a difference inequality. 

\begin{theorem}\label{le3.4}
Let Assumptions \ref{ass1}-\ref{ass3} and  $X_0\in L^{p}_{0}$ hold. Then, % and $M\geq 1$,
\begin{align*}
\sup_{M\geq 1}\sup_{1\leq i\leq M}\sup_{0\leq \Delta\leq 1}\sup_{0\leq t \leq T}\Big(\mathbb{E}|Y^{i,M}_{t }|^{p}\vee\mathbb{E}|\bar{Y}^{i,M}_{t }|^{p}\vee\mathbb{E}| {\tilde{Y}}^{i,M}_{t }|^{p}\Big)\leq C_T,~~ \forall T>0.
\end{align*}
\end{theorem}
\begin{proof} Fix any $T>0$. For any $ ~M\geq 1,~ 1\leq i\leq M$, $ 0\leq \Delta\leq 1$ and $0\leq t_k\leq T$, 
by the It\^o formula, we derive from \eqref{eq4.35} and Assumption \ref{ass3} that for any $t\in[t_{k},t_{k+1})$,
\begin{equation*}
\begin{aligned}
\mathbb{E}|\tilde{Y}^{i,M}_{t}|^{p}
 &\leq \mathbb{E}|Y^{i,M}_{t_{k}}|^{p}+\frac{p}{2}\mathbb{E}\int_{t_k}^{t}|\tilde{Y}^{i,M}_{s}|^{p-2}
\Big[2(Y^{i,M}_{t_{k}})^{T}f(Y^{i,M}_{t_{k}},\mathcal{L}^{Y,M}_{t_{k}})+(p-1)\big|g(Y^{i,M}_{t_{k}},\mathcal{L}^{Y,M}_{t_{k}})
\big|^{2}\Big]\mathrm{d}s
\\&~~~+p\mathbb{E}\int_{t_{k}}^{t}|\tilde{Y}^{i,M}_{s}|^{p-2}|\tilde{Y}^{i,M}_{s}-Y^{i,M}_{t_{k}}|
\big|f(Y^{i,M}_{t_{k}},\mathcal{L}^{Y,M}_{t_{k}})\big|\mathrm{d}s \label{eq*+1}
\\&\leq \mathbb{E}|Y^{i,M}_{t_{k}}|^{p}+\frac{pL_2}{2}\mathbb{E}\int_{t_{k}}^{t}|\tilde{Y}^{i,M}_{s}|^{p-2}
\big(1+|Y^{i,M}_{t_{k}}|^{2}+\mathcal{L}^{Y,M}_{t_{k}}(|\cdot|^2)\big)\mathrm{d}s
\\&~~~+p\mathbb{E}\int_{t_{k}}^{t}|\tilde{Y}^{i,M}_{s}|^{p-2}|
\tilde{Y}^{i,M}_{s}-Y^{i,M}_{t_{k}}|\big|f(Y^{i,M}_{t_{k}},\mathcal{L}^{Y,M}_{t_{k}})\big|\mathrm{d}s.
\end{aligned}
\end{equation*}
Applying \eqref{eq*Y}, the Young inequality and the elementary inequality %inequality, \eqref{eqp4.8} and the inequality 
%\begin{align}\label{3.15*}
%\mathbb{E}\Big(\frac{1}{M}\sum_{i=1}^{M}\big|Y^{i,M}_{t_{k}}\big|^2\Big)^{\frac{p}{2}}\leq \frac{1}{M}\sum_{i=1}^{M}\mathbb{E}|Y^{i,M}_{t_{k}}|^{p}=\mathbb{E}|Y^{i,M}_{t_{k}}|^{p},
%\end{align}
we derive that
\begin{align}\label{eq3.36}
\mathbb{E}|\tilde{Y}^{i,M}_{t}|^{p}%&\leq \mathbb{E}|Y^{i,M}_{t_{k}}|^{p}+C\Delta +C\int_{t_{k}}^{t}\mathbb{E}|\tilde{Y}^{i,M}_{s}|^{p}\mathrm{d}s +C\Delta\mathbb{E}|Y^{i,M}_{t_{k}}|^{p} \nn\
%\\&~~~+C\int^{t}_{t_{k}}\mathbb{E}\Big(|\tilde{Y}^{i,M}_{s}-Y^{i,M}_{t_{k}}|^{\frac{p}{2}}\big|f(Y^{i,M}_{t_{k}},\mathcal{L}^{Y,M}_{t_{k}})
%\big|^{\frac{p}{2}}\Big)\mathrm{d}s\nn\
 &\leq  C\Delta+(1+C\Delta)\mathbb{E}|Y^{i,M}_{t_{k}}|^{p}+C\int_{t_{k}}^{t}\mathbb{E}|\tilde{Y}^{i,M}_{s}-Y^{i,M}_{t_k}|^{p}\mathrm{d}s \nn\\& ~~+C\int^{t}_{t_{k}}\mathbb{E}\Big(|\tilde{Y}^{i,M}_{s}-Y^{i,M}_{t_{k}}|^{\frac{p}{2}}\big|f(Y^{i,M}_{t_{k}},\mathcal{L}^{Y,M}_{t_{k}})
\big|^{\frac{p}{2}}\Big)\mathrm{d}s.
\end{align}
For any  $s\in [t_{k},t ]$,  applying \eqref{eq4.35}, \eqref{eq2.2}, \eqref{eq2.3} and \eqref{eq*Y}
 yields that
\begin{align}\label{eq3.32}
 \mathbb{E}|\tilde{Y}^{i,M}_{s}-Y^{i,M}_{t_k}|^{p}&\leq2^{p-1}\left(\mathbb{E}\big|f(Y^{i,M}_{t_{k}},\mathcal{L}^{Y,M}_{t_{k}})\big|^{p}\Delta^{p}+\mathbb{E}\big|g(Y^{i,M}_{t_{k}},\mathcal{L}^{Y,M}_{t_{k}})\big|^{p}\Delta^{\frac{p}{2}}\right)\nn\
\\& \leq C\Delta^{p}\Big( \Delta^{-\kappa p} \mathbb{E}|Y^{i,M}_{t_{k}}|^{p}+\mathbb{E}|Y^{i,M}_{t_{k}}|^{p}+1\Big)
+C\Delta^{\frac{p}{2}}  \Big( \Delta^{-\frac{\kappa p} {2}} \mathbb{E}|Y^{i,M}_{t_{k}}|^{p}+\mathbb{E}|Y^{i,M}_{t_{k}}|^{p}+1\Big) \nn\
\\&\leq C\big(\Delta^{\frac{p}{2}}+\Delta^{\frac{p(1-\kappa)}{2}}\mathbb{E}|Y^{i,M}_{t_{k}}|^{p}\big)\leq C\Delta^{\frac{p(1-\kappa)}{2}}\big(1+\mathbb{E}|Y^{i,M}_{t_{k}}|^{p}\big).
\end{align}
Combining this with \eqref{eq2.2} and \eqref{eq*Y}, and  using the H\"older inequality and the Young inequality,  we derive that for any $s\in [t_{k},t ]$,
\begin{align}\label{eq3.35}
\mathbb{E}\Big(|\tilde{Y}^{i,M}_{s}-Y^{i,M}_{t_k}|^{\frac{p}{2}}
\big|f(Y^{i,M}_{t_{k}},\mathcal{L}^{Y,M}_{t_{k}})\big|^{\frac{p}{2}}\Big)\leq & \Big(\mathbb{E}|\tilde{Y}^{i,M}_{s}-Y^{i,M}_{t_{k}}|^{p}\Big)^{\frac{1}{2}}\Big(\mathbb{E}\big|f(Y^{i,M}_{t_{k}},\mathcal{L}^{Y,M}_{t_{k}})\big|^{p}\Big)^{\frac{1}{2}}\nn\
\\ \leq &  C\Delta^{\frac{p(1- \kappa)}{4}}\big(1+\mathbb{E}|Y^{i,M}_{t_{k}}|^{p}\big)^{\frac{1}{2}}
\left(\Delta^{-\kappa p} \mathbb{E}|Y^{i,M}_{t_{k}}|^{p}+\mathbb{E}|Y^{i,M}_{t_{k}}|^{p}+1\right)^{\frac{1}{2}}\nn\
\\ %\leq & C\big[1+\big(\mathbb{E}|Y^{i,M}_{t_{k}}|^{p}\big)^{\frac{1}{2}}\big]^2 
\leq& C\Delta^{\frac{p(1-3\kappa)}{4}}\big(1+\mathbb{E}|Y^{i,M}_{t_{k}}|^{p}\big).
\end{align}
Then inserting \eqref{eq3.32} and \eqref{eq3.35} into \eqref{eq3.36} and using $\kappa\in (0, 1/3]$ yields that
\begin{align*}
\mathbb{E}|\tilde{Y}^{i,M}_{t}|^{p}&\leq (1+C\Delta)\mathbb{E}|Y^{i,M}_{t_{k}}|^{p}+C\Delta\big(1+\mathbb{E}|Y^{i,M}_{t_{k}}|^{p}\big),
\end{align*}
which implies that
\begin{align}\label{eqY-t}
1+\mathbb{E}|\tilde{Y}^{i,M}_{t}|^{p}
\leq (1+C\Delta)\left(1+\mathbb{E}|Y^{i,M}_{t_{k}}|^{p}\right). 
\end{align}
By virtue of the Fatou Lemma, we derive from \eqref{eqcc3.37} and \eqref{eqY-t} that
\begin{align*}
 1+\mathbb{E}|\bar{Y}^{i,M}_{t_{k+1}}|^{p}&=1+ \mathbb{E}\Big(\liminf_{t\rightarrow t^{-}_{k+1}}|\tilde{Y}^{i,M}_{t}|^{p}\Big)\leq 1+\liminf_{t\rightarrow t^{-}_{k+1}}\mathbb{E}|\tilde{Y}^{i,M}_{t}|^{p}\\
&\leq (1+C\Delta)\left(1+\mathbb{E}|Y^{i,M}_{t_{k}}|^{p}\right)\leq (1+C\Delta)\left(1+\mathbb{E}|\bar{Y}^{i,M}_{t_{k}}|^{p}\right).
\end{align*}
Due to the inequality $1+x\leq e^{x}$ for $  x\geq 0$, solving the above difference inequality leads to
\begin{align*}
\mathbb{E}|Y^{i,M}_{t_{k}}|^{p}\leq\mathbb{E}|\bar{Y}^{i,M}_{t_{k}}|^{p}&\leq (1+C\Delta)(1+\mathbb{E}|\bar{Y}^{i,M}_{t_{k-1}}|^{p})\leq \cdots\leq (1+C\Delta)^k \big(1+\mathbb{E}|\bar{Y}^{i,M}_{0}|^{p}\big)\nn\
\\&\leq e^{Ck\Delta}(1+\mathbb{E}|X^{i}_0|^{p})\leq e^{CT}\big(1+\mathbb{E}|X_{0}|^{p}\big)\leq C_T.
\end{align*}
Therefore, the desired results follow from the arbitrariness of $M$ and \eqref{eqY-t}.
\end{proof}

{ By virtue of Theorem \ref{le3.4} we can obtain the upper bounds of  $\mathbb{E}|\tilde{Y}^{i,M}_{t}-Y^{i,M}_{t}|^{p}$  from \eqref{eq3.32} and %obtain the  boundedness of $\mathbb{E}|\tilde{Y}^{i,M}_{t}|^{p}$ as well as  
the related term from \eqref{eq3.35}. }
\begin{cor}\label{cor2}
Let Assumptions \ref{ass1}-\ref{ass3} and  $X_0\in L^{p}_{0}$ hold. For any  $  T>0$,  we have   
\begin{align*}
&\sup_{M\geq 1}\sup_{1\leq i\leq M}\sup_{0< \Delta\leq 1}\sup_{0\leq t\leq T} \mathbb{E}|\tilde{Y}^{i,M}_{t}-Y^{i,M}_{t}|^{p} \leq C_{T}\Delta^{\frac{p}{3}},  
%\\&~~~~\sup_{M\geq 1}\sup_{1\leq i\leq M}\sup_{0< \Delta\leq 1}\sup_{0\leq t\leq T}\mathbb{E}|\tilde{Y}^{i,M}_{t}|^{p}\leq C_{T},
 \\&\sup_{M\geq 1}\sup_{1\leq i\leq M}\sup_{0< \Delta\leq1}\sup_{0\leq t\leq T}\mathbb{E}\Big(|\tilde{Y}^{i,M}_{t}-Y^{i,M}_{t}|^{\frac{p}{2}}\big|f(Y^{i,M}_{t},\mathcal{L}^{Y,M}_{t})\big|^{\frac{p}{2}}\Big)\leq C_{T}.
\end{align*}
\end{cor}
\begin{lemma}\label{L6.14}
Let $\tilde{r}> 0$ and $\zeta\in L^{\tilde{r}}$.  Then for any $\Delta\in (0,1]$  and $ {r}\in (0, \tilde{r})$,
\begin{align*}
\mathbb{E}|\zeta-\pi_{\Delta}(\zeta)|^{ {r}}\leq  \frac{ C\mathbb{E}|\zeta|^{\tilde{r}}}{\big[\varphi^{-1}(H\Delta^{-\kappa})\big]^{\tilde{r}- {r}}}.
\end{align*}
\end{lemma}
\begin{proof}
For any $\Delta\in (0,1]$, define a set $
\Upsilon_{\Delta}:=\big\{\omega:|\zeta(\omega)|\geq \varphi^{-1}(H\Delta^{-\kappa})\big\}.
$
Note that $\pi_{\Delta}(\zeta)=\zeta$ for any $\omega\in \Upsilon^{c}_{\Delta}$. Then using the H\"older inequality one derives that
\begin{align*}
\mathbb{E}|\zeta-\pi_{\Delta}(\zeta)|^{ {r}}&=\mathbb{E}\big(|\zeta-\pi_{\Delta}(\zeta)|^{ {r}}I_{\Upsilon_{\Delta}}\big)+\mathbb{E}\big(|\zeta-\pi_{\Delta}(\zeta)|^{ {r}}I_{\Upsilon^{c}_{\Delta}}\big)\nn\
\\&=\mathbb{E}\big(|\zeta-\pi_{\Delta}(\zeta)|^{ {r}}I_{\Upsilon_{\Delta}}\big)
\leq \big(\mathbb{E}\big(|\zeta-\pi_{\Delta}(\zeta)|^{\tilde{r}}\big)^{\frac{^{ {r}}}{\tilde{r}}}\big(\mathbb{P}(\Upsilon_{\Delta})\big)^{\frac{\tilde{r}- {r}}{\tilde{r}}}\nn\
\\&\leq 2^{ {r}}\big(\mathbb{E}|\zeta|^{\tilde{r}}\big)^{\frac{ {r}}{\tilde{r}}}\big(\mathbb{P}(\Upsilon_{\Delta})\big)^{\frac{\tilde{r}- {r}}{\tilde{r}}}
\end{align*}
Furthermore, using the Chebyshev inequality yields that
$\displaystyle
 \mathbb{P}(\Upsilon_{\Delta}) \leq \frac{\mathbb{E}|\zeta|^{\tilde{r}}}{\big[\varphi^{-1}(H\Delta^{-\kappa})\big]^{\tilde{r}}}
 .
$
Therefore, the desired assertion follows. 
The proof is complete.
\end{proof}
%{\color{blue}\begin{cor} \label{cor7}
%Let Assumptions \ref{ass1}-\ref{ass3} and  $X_0\in L^{p}_{0}$ hold. For any $q\in [0,p)$,  we have   
%\begin{align*}
%\sup_{M\geq 1}\sup_{1\leq i\leq M}\sup_{0< \Delta\leq 1}\sup_{0\leq t\leq T} \mathbb{E}|\bar{Y}^{i,M}_{t}-Y^{i,M}_{t}|^{q} \leq  C_{T} \Delta^{\frac{\kappa (p-q)}{ \alpha}},~~\forall~ T>0.
%\end{align*}
%\end{cor}}
%\begin{proof} For any $\Delta\in (0,1]$ and $t\in [0,T]$, there is an integer $k$ such that $t\in [t_k,t_{k+1})$.
%By virtue of Lemma \ref{L6.14} with  $r_1=r_2=p$ and $r=q$, and using Theorem \ref{le3.4} one derives that for any $q\in [0,p)$,
%\begin{align*}
%\mathbb{E}|\bar{Y}^{i,M}_{t}-Y^{i,M}_{t}|^q=\mathbb{E}|\bar{Y}^{i,M}_{t_{k}}-Y^{i,M}_{t_{k}}|^q\leq C\mathbb{E}|\bar{Y}^{i,M}_{t_{k}}|^{p}
%   \Delta^{\frac{\kappa (p-q)}{ \alpha}}\leq C_{T} \Delta^{\frac{\kappa (p-q)}{ \alpha}}.
%\end{align*}
%\end{proof}

It follows from \eqref{3.9} and \eqref{eq4.35} that $\tilde{Y}^{i,M}_{t}$  is formed by continuously concatenating $Y^{i,M}_{t_k}$ and $\bar{Y}^{i,M}_{t_{k+1}}$  for $t\in [t_k, t_{k+1})$.  Whenever $\tilde{Y}^{i,M}_{t}$ lies within the ball of the  radius $\varphi^{-1}(H\Delta_1^{-\kappa})$, 
 $\bar{Y}^{i,M}_{t}$ also resides within the ball, rendering the truncation mapping inactive. Consequently, we consider the first exit time of $\tilde{Y}^{i,M}_{t}$ from the ball; up to this time, $\bar{Y}^{i,M}_{t}=Y^{i,M}_{t}$.

%One observes that  $\tilde{Y}^{i,M}_{t}$  takes more values  than   $ {Y}^{i,M}_{t}$ in $[0, T]$. 
%Thus if $\tilde{Y}^{i,M}_{t}$ does not escape from the ball with the  radius $\varphi^{-1}(H\Delta_1^{-\kappa})$, then $ {Y}^{i,M}_{t}$ does not either, which implies that the truncation device does not work in the scheme \eqref{eq2.1}  as $\Delta\in (0, \Delta_1]$. Now we define the first exit time from the ball  and analyze the escape  probability for    any given time $T$. 

\begin{lemma}\label{le3.5}
Let Assumptions \ref{ass1}-\ref{ass3} and  $X_0\in L^{p}_{0}$ hold. 
For any $\Delta_1\in (0,1]$ and $\Delta \in (0,\Delta_1]$, define the stopping time
\begin{align}\label{eqpl3.43}
\eta^{i,M}_{\Delta,\Delta_1}=\inf\left\{t\geq0: |\tilde{Y}^{i,M}_{t}|\geq \varphi^{-1}(H\Delta_1^{-\kappa})\right\} 
\end{align}
for $\kappa\in (0, 1/3]$. Then for any $X_0\in L^{p}_{0}$,~~
$ 
\mathbb{P}\left(\eta^{i,M}_{\Delta,\Delta_1}\leq T\right)\leq \displaystyle {C_{T}}{\left[\varphi^{-1}(H\Delta_1^{-\kappa})\right]^{-p}},~~ \forall T>0.
$ 
\end{lemma}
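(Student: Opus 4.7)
My plan is to mimic the template used in the second half of Lemma \ref{le2.3}: first establish a Chebyshev/Markov-type inequality tying the tail probability of $\eta^{i,M}_{\Delta,\Delta_1}$ to the $p$-th moment of the stopped process, then show that this stopped moment is bounded by a constant $C_T$ that does not depend on $\Delta$ or $\Delta_1$. For the first ingredient, note that $\tilde{Y}^{i,M}_t$ is continuous in $t$ on each subinterval $[t_k, t_{k+1})$ by \eqref{eq4.35}, so on the event $\{\eta^{i,M}_{\Delta,\Delta_1}\leq T\}$ one has $|\tilde{Y}^{i,M}_{\eta^{i,M}_{\Delta,\Delta_1}}|\geq \varphi^{-1}(h(\Delta_1))$; therefore
\begin{align*}
[\varphi^{-1}(h(\Delta_1))]^{p}\,\mathbb{P}(\eta^{i,M}_{\Delta,\Delta_1}\leq T) \leq \mathbb{E}\Big[\big|\tilde{Y}^{i,M}_{T\wedge \eta^{i,M}_{\Delta,\Delta_1}}\big|^{p} I_{\{\eta^{i,M}_{\Delta,\Delta_1}\leq T\}}\Big] \leq \mathbb{E}\big|\tilde{Y}^{i,M}_{T\wedge \eta^{i,M}_{\Delta,\Delta_1}}\big|^{p}.
\end{align*}

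For the second ingredient I would apply the It\^o formula to $|\tilde{Y}^{i,M}_t|^{p}$ on each subinterval stopped at $\eta^{i,M}_{\Delta,\Delta_1}$, replicating the computation carried out in Lemma \ref{le3.4}. The only nontrivial change is the substitution of $t$ by $t\wedge \eta^{i,M}_{\Delta,\Delta_1}$; the local-martingale contribution vanishes in expectation (after an additional localization if needed), and the same decomposition $2(\tilde{Y}^{i,M}_s)^{T}f(Y^{i,M}_{t_k},\mathcal{L}^{Y,M}_{t_k}) = 2(Y^{i,M}_{t_k})^{T}f(Y^{i,M}_{t_k},\mathcal{L}^{Y,M}_{t_k}) + 2(\tilde{Y}^{i,M}_s - Y^{i,M}_{t_k})^{T}f(Y^{i,M}_{t_k},\mathcal{L}^{Y,M}_{t_k})$ used in the proof of Lemma \ref{le3.4} permits the invocation of Assumption \ref{ass2} on the first piece, while Young's inequality together with the auxiliary estimate in Lemma \ref{cor1} controls the increment piece. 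Combining this with the moment bounds supplied by Lemma \ref{lem2.2} and Lemma \ref{le3.4} (for $\mathcal{L}^{Y,M}_{t_k}(|\cdot|^{2})$ and $|Y^{i,M}_{t_k}|^{p}$) delivers an integral inequality of the form
\begin{align*}
\mathbb{E}\big|\tilde{Y}^{i,M}_{t\wedge \eta^{i,M}_{\Delta,\Delta_1}}\big|^{p} \leq \mathbb{E}|X_0|^{p} + C\int_{0}^{t}\mathbb{E}\big|\tilde{Y}^{i,M}_{s\wedge \eta^{i,M}_{\Delta,\Delta_1}}\big|^{p}\mathrm{d}s + C_T,
\end{align*}
from which the Gronwall inequality yields $\mathbb{E}|\tilde{Y}^{i,M}_{T\wedge \eta^{i,M}_{\Delta,\Delta_1}}|^{p}\leq C_T$. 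Inserting this into the Chebyshev inequality above completes the proof.

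The main obstacle I anticipate is purely bookkeeping: one must make sure that when the argument of Lemma \ref{le3.4} is reused, the transition at the grid points (passing from $\bar{Y}^{i,M}_{t_{k+1}}$ to $Y^{i,M}_{t_{k+1}} = \pi_{\Delta}(\bar{Y}^{i,M}_{t_{k+1}})$ via the Fatou step used around \eqref{eqcc3.37}) still respects the stopping, and that each auxiliary constant in Lemmas \ref{lem2.2}, \ref{le3.4} and \ref{cor1} is indeed independent of $\Delta_1$. Because the truncation already guarantees $|Y^{i,M}_{t_k}|\leq \varphi^{-1}(h(\Delta))$ deterministically, the stopping affects only the continuous piece $\tilde{Y}^{i,M}_t$, so no new obstruction arises and the argument is essentially a verbatim adaptation of Lemma \ref{le2.3} to the piecewise-frozen coefficient dynamics of \eqref{eq4.35}.
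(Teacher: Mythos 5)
Your proposal is correct and follows essentially the same route as the paper: a Chebyshev bound via $[\varphi^{-1}(h(\Delta_1))]^{p}\mathbb{P}(\eta^{i,M}_{\Delta,\Delta_1}\leq T)\leq \mathbb{E}\big|\tilde{Y}^{i,M}_{T\wedge\eta^{i,M}_{\Delta,\Delta_1}}\big|^{p}$, combined with the It\^o/Young computation of Lemma \ref{le3.4} and the auxiliary bounds of Lemmas \ref{lem2.2}, \ref{le3.4} and \ref{cor1} to show the stopped $p$-th moment is at most $C_T$. The only cosmetic difference is that the paper skips the final Gronwall step by bounding $\int_0^T\mathbb{E}|\tilde{Y}^{i,M}_s|^p\,\mathrm{d}s$ directly with the unstopped estimate of Lemma \ref{cor1} (and, like you, it tacitly restricts to $\Delta\in(0,\Delta_1]$ so that $\varphi^{-1}(h(\Delta))\geq\varphi^{-1}(h(\Delta_1))$, which is what makes the first inequality survive the truncation jumps at grid points).
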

\begin{proof} Fix the constant $\Delta_1\in (0,1]$.
The increase of $\varphi^{-1}$ implies that  $\varphi^{-1} (H\Delta^{-\kappa})\geq\varphi^{-1} (H\Delta_1^{-\kappa})$ for any $\Delta\in(0,\Delta_1]$. Thus, for any $0\leq t\leq T\wedge \eta^{i,M}_{\Delta,\Delta_1}$,
\begin{align*}
\tilde{Y}^{i,M}_{t}=Y^{i,M}_{0}+\int_{0}^{t}f(Y^{i,M}_{s},\mathcal{L}^{Y,M}_{s})\mathrm{d}s+\int_{0}^{t}g(Y^{i,M}_{s},\mathcal{L}^{Y,M}_{s})\mathrm{d}B^{i}_{s},
\end{align*}
where  $\mathcal{L}^{Y,M}_{s}=\frac{1}{M}\sum_{i=1}^{M}\boldsymbol{\delta}_{Y^{i,M}_{s}}.$
Employing techniques as in the proofs of Theorem \ref{le3.4}, we arrive at
\begin{align*}
\mathbb{E}\big|\tilde{Y}^{i,M}_{T\wedge\eta^{i,M}_{\Delta,\Delta_1}}\big|^{p}&\leq \mathbb{E}|Y^{i,M}_{0}|^{p}+\frac{pL_2}{2}\mathbb{E}\int_{0}^{T\wedge\eta^{i,M}_{\Delta,\Delta_1}}|\tilde{Y}^{i,M}_{s}|^{p-2}\left(1+|Y^{i,M}_{s}|^{2}+\mathcal{L}^{Y,M}_{s}(|\cdot|^2)\right)\mathrm{d}s\nn\
\\&~~~+p\mathbb{E}\int_{0}^{T\wedge\eta^{i,M}_{\Delta,\Delta_1}}|\tilde{Y}^{i,M}_{s}|^{p-2}|\tilde{Y}^{i,M}_{s}-Y^{i,M}_{s}|\big|f(Y^{i,M}_{s},\mathcal{L}^{Y,M}_{s})\big|\mathrm{d}s\nn\
\\&\leq \mathbb{E}|Y^{i,M}_{0}|^{p}+CT+C\int_{0}^{T}\mathbb{E}\big|\tilde{Y}^{i,M}_{s}\big|^{p}\mathrm{d}s+C\int_{0}^{T}\mathbb{E}|Y^{i,M}_{s}|^{p}\mathrm{d}s \nn\
\\&~~~+C\int_{0}^{T}\mathbb{E}\Big(|\tilde{Y}^{i,M}_{s}-Y^{i,M}_{s}|^{\frac{p}{2}}\big|f(Y^{i,M}_{s},\mathcal{L}^{Y,M}_{s})\big|^{\frac{p}{2}}\Big)\mathrm{d}s.
\end{align*}
%Due to the fact that  $\{Y^{i,M}_{s}\}_{1\leq i\leq M}$  are i.i.d., we derive that
%\begin{align*}
%\int_{0}^{T}\mathbb{E}\big(\mathcal{L}^{Y,M}_{s}(|\cdot|^2)\big)^{\frac{p}{2}}\mathrm{d}s=\int_{0}^{T}\mathbb{E}\Big(\frac{1}{M}\sum_{i=1}^{M}|Y^{i,M}_{s}|^2\Big)^{\frac{p}{2}}\mathrm{d}s \leq \int_{0}^{T}\mathbb{E}|Y^{i,M}_{s}|^{p}\mathrm{d}s.
%\end{align*}
Thus, it follows from  Theorem \ref{le3.4} and Corollary \ref{cor2}   that 
\begin{align*}
\mathbb{E}\big|\tilde{Y}^{i,M}_{T\wedge\eta^{i,M}_{\Delta,\Delta_1}}\big|^{p}&\leq \mathbb{E}|Y^{i,M}_{0}|^{p}+C_{T}\leq \mathbb{E}|X_{0}|^{p}+C_{T}\leq C_{T},
\end{align*}
where the last inequality used the fact  $X_0\in L^{p}_{0}$.
Therefore, for any $T>0$,
\begin{align*}
\big[\varphi^{-1}(H\Delta_1^{-\kappa})\big]^{p}\mathbb{P}\left(\eta^{i,M}_{\Delta,\Delta_1}\leq T\right)\leq \mathbb{E}\big|\tilde{Y}^{i,M}_{T\wedge\eta^{i,M}_{\Delta,\Delta_1}}\big|^{p}\leq C_T.
\end{align*}
Thus, the desired result follows directly.
\end{proof}
 
 For any $q\in (0, p)$, we establish the convergence  of $\mathbb{E}\big|X^{i,M}_{t}-\tilde{Y}^{i,M}_{t}\big|^{q}$, and then by this bridge we obtain the convergence of $\mathbb{E}\big|X^{i,M}_{t}-Y^{i,M}_{t}\big|^{q}$ and  $\mathbb{E}\big|X^{i,M}_{t}-\bar{Y}^{i,M}_{t}\big|^{q}$.
\begin{theorem}\label{Lemc3.6}
Let Assumptions \ref{ass1}-\ref{ass3}{ hold with $p,p_0>2$} and  $X_0\in L^{p}_{0}$. Then for any $q\in (0,  p)$,
\begin{align}\label{eq*+30}
\lim\limits_{\Delta\rightarrow0}\sup_{ M\geq 1}\sup_{1\leq i\leq M}\sup_{0\leq t\leq T}\Big(\mathbb{E}\big|X^{i,M}_{t}-Y^{i,M}_{t}\big|^{q}\vee \mathbb{E}\big|X^{i,M}_{t}-\bar{Y}^{i,M}_{t}\big|^{q}\Big)=0,~~~\forall~T\geq 0.
\end{align}
\end{theorem}
\begin{proof} Fix $T\geq 0$ and $q\in [2,p_0\wedge  p)$.
 For any $M\geq 1$, $1\leq i\leq M$, $\Delta_1\in (0,1]$ and $\Delta\in (0,\Delta_1]$, define $$\theta^{i,M}_{\Delta,\Delta_1}=\tau^{i,M}_{\varphi^{-1}(H\Delta_1^{-\kappa})}\wedge\eta^{i,M}_{\Delta,\Delta_1},~~~~ \Phi^{i,M}_t =X^{i,M}_{t}-\tilde{Y}^{i,M}_{t},~~~\forall t\in [0,T].$$
 For any   $0\leq t\leq T $, one notices that
\begin{align*}
\mathbb{E}|\Phi^{i,M}_t|^{q}&=\mathbb{E}\Big(I_{\{\theta^{i,M}_{\Delta,\Delta_1}>T\}}
|\Phi^{i,M}_t|^{q}\Big)+\mathbb{E}\Big(I_{\{\theta^{i,M}_{\Delta,\Delta_1}\leq T\}}|\Phi^{i,M}_t|^{q}\Big)
\\&\leq \mathbb{E}\Big|\Phi^{i,M}_{t\wedge \theta^{i,M}_{\Delta,\Delta_1}}\Big|^{q}+\mathbb{E}\Big(I_{\{\theta^{i,M}_{\Delta,\Delta_1}\leq T\}}|\Phi^{i,M}_t|^{q}\Big).
\end{align*}
For any $\delta>0$, using the Young inequality yields
\begin{align*}\mathbb{E}\Big(I_{\{\theta^{i,M}_{\Delta,\Delta_1}\leq T\}}|\Phi^{i,M}_t|^{q}\Big)
 \leq  
\frac{q\delta }{p}\sup_{0\leq t\leq T}\mathbb{E}|\Phi^{i,M}_t|^{p}+\frac{(p-q)}{p\delta^{\frac{q}{p-q}}}\mathbb{P}\left(\theta^{i,M}_{\Delta,\Delta_1}\leq T\right).
\end{align*} 
By virtue of Lemma  \ref{le2.3} and  Theorem \ref{le3.4}, it follows that
\begin{align*}
\frac{q\delta }{p}\sup_{0\leq t\leq T}\mathbb{E}|\Phi^{i,M}_t|^{p}\leq \frac{q2^{p-1} \delta}{p}\Big(\sup_{0\leq t\leq T}\mathbb{E}|X^{i,M}_{t}|^{p}+\sup_{0\leq t\leq T}\mathbb{E}|\tilde{Y}^{i,M}_{t}|^{p}\Big)\leq \frac{q2^{p-1} \delta C_{T}}{p}.
\end{align*}
By Lemma \ref{le2.3} and Lemma \ref{le3.5}, we derive that for any $\Delta\in (0,\Delta_1]$,
\begin{align*} 
\frac{(p-q)}{p\delta^{\frac{q}{p-q}}}\mathbb{P}\left(\theta^{i,M}_{\Delta,\Delta_1}
\leq T\right)&\leq \frac{(p-q)}{p\delta^{\frac{q}{p-q}}}\left[\mathbb{P}\left(\tau^{i,M}_{\varphi^{-1}(H\Delta_1^{-\kappa})}\leq T\right)+\mathbb{P}\left(\eta^{i,M}_{\Delta,\Delta_1}\leq T\right)\right]\nn\
\\&\leq \frac{2(p-q)C_{T}}{p\delta^{\frac{q}{p-q}}[\varphi^{-1}(H\Delta_1^{-\kappa})]^{p}}.
\end{align*}
Therefore, for any $0\leq t\leq T $, we have \begin{align} \label{eqc4.47+}\mathbb{E}\Big(I_{\{\theta^{i,M}_{\Delta,\Delta_1}\leq T\}}|\Phi^{i,M}_t|^{q}\Big)
 \leq \frac{q2^{p-1} \delta C_{T}}{p}+\frac{2(p-q)C_{T}}{p\delta^{\frac{q}{p-q}}[\varphi^{-1}(H\Delta_1^{-\kappa})]^{p}},
\end{align}  which implies \begin{align} \label{eqc4.47}
\mathbb{E}|\Phi^{i,M}_t|^{q}
&\leq \mathbb{E}\Big|\Phi^{i,M}_{t\wedge \theta^{i,M}_{\Delta,\Delta_1}}\Big|^{q}+ \frac{q2^{p-1} \delta C_{T}}{p}+\frac{2(p-q)C_{T}}{p\delta^{\frac{q}{p-q}}[\varphi^{-1}(H\Delta_1^{-\kappa})]^{p}}.
\end{align} 
Noticing that  $ \varphi^{-1}(H\Delta_1^{-\kappa}) \leq \varphi^{-1}(H\Delta^{-\kappa})$ for any $\Delta\in (0,\Delta_1]$, utilizing the It\^o formula, we compute 
\begin{align}\label{LCY3.41}
\mathbb{E}\Big|\Phi^{i,M}_{t\wedge\theta^{i,M}_{\Delta,\Delta_1}}\Big|^{q}\nn\
 = &\mathbb{E} \big|\Phi^{i,M}_{0 }\big|^{q} + \mathbb{E}\Big[ \frac{q}{2}\int_{0}^{t\wedge\theta^{i,M}_{\Delta,\Delta_1}}\big|\Phi^{i,M}_{s }\big|^{q-2}\Big(2\big(\Phi^{i,M}_{s}\big)^{T}\big(f(X^{i,M}_{s},\mathcal{L}^{X,M}_{s})-f(Y^{i,M}_{s},\mathcal{L}^{Y,M}_{s})\big)\nn\
\\~~~&~~~~~~~~~~~~~~~~+(q-1)\big|g(X^{i,M}_{s},\mathcal{L}^{X,M}_{s})-g(Y^{i,M}_{s},\mathcal{L}^{Y,M}_{s})
\big|^2\Big)\mathrm{d}s\Big].
\end{align}
By the Young inequality and Assumption \ref{ass2}, we deduce  that
\begin{align*}
 &2\big(\Phi^{i,M}_{s}\big)^{T} \big(f(X^{i,M}_{s},\mathcal{L}^{X,M}_{s})-f(Y^{i,M}_{s},\mathcal{L}^{Y,M}_{s})\big)+(q-1)\big|g(X^{i,M}_{s},\mathcal{L}^{X,M}_{s})-g(Y^{i,M}_{s},\mathcal{L}^{Y,M}_{s})\big|^{2}\nn\
\\ \leq& 2\big(\Phi^{i,M}_{s}\big)^{T}\big(f(X^{i,M}_{s},\mathcal{L}^{X,M}_{s})-f(\tilde{Y}^{i,M}_{s},\mathcal{L}^{\tilde{Y},M}_{s})\big)+(p_0-1)\big|g(X^{i,M}_{s},\mathcal{L}^{X,M}_{s})-g(\tilde{Y}^{i,M}_{s},\mathcal{L}^{\tilde{Y},M}_{s})\big|^2\nn\
\\& +2(\Phi^{i,M}_{s})^{T}\big(f(\tilde{Y}^{i,M}_{s},\mathcal{L}^{\tilde{Y},M}_{s})
-f(Y^{i,M}_{s},\mathcal{L}^{Y,M}_{s})\big) +\Big(q+\frac{1}{p_0-q}\Big)
\big|g(\tilde{Y}^{i,M}_{s},\mathcal{L}^{\tilde{Y},M}_{s})-g(Y^{i,M}_{s},\mathcal{L}^{Y,M}_{s})\big|^{2} 
\\ \leq & L_1\big(|\Phi^{i,M}_{s}|^2+\mathbb{W}^{2}_{2}(\mathcal{L}^{X,M}_{s},
\mathcal{L}^{\tilde{Y},M}_{s})\big)+2\big|\Phi^{i,M}_{s}\big|
\big|f(\tilde{Y}^{i,M}_{s},\mathcal{L}^{\tilde{Y},M}_{s})-f(Y^{i,M}_{s},\mathcal{L}^{Y,M}_{s})\big|\nn\
\\& +\Big(q+\frac{1}{p_0-q}\Big) \big|g(\tilde{Y}^{i,M}_{s},\mathcal{L}^{\tilde{Y},M}_{s})-g(Y^{i,M}_{s},\mathcal{L}^{Y,M}_{s})\big|^{2}.
\end{align*}
Inserting the above inequality into \eqref{LCY3.41} and then employing the Young inequality we arrive at
\begin{align} \label{eqc342}
\mathbb{E} \Big|\Phi^{i,M}_{t\wedge\theta^{i,M}_{\Delta,\Delta_1}}\Big|^{q}
\leq& \mathbb{E}\big|\Phi^{i,M}_{0}\big|^{q}  +C\int_{0}^{t}\mathbb{E}\Big 
|\Phi^{i,M}_{s\wedge\theta^{i,M}_{\Delta,\Delta_1}}
\Big|^{q}\mathrm{d}s
+C\mathbb{E} \int_{0}^{t}  \mathbb{W}^{q}_{2}\big(\mathcal{L}^{X,M}_{s},{ \mathcal{L}^{\tilde{Y},M}_{s}}\big) \mathrm{d}s+C\mathcal{J}^{i,M},
\end{align}
where
\begin{align}\label{eq*J}
\mathcal{J}^{i,M} =\mathbb{E}\int_{0}^{t\wedge\theta^{i,M}_{\Delta,\Delta_1}}\Big(\big|f(\tilde{Y}^{i,M}_{s},\mathcal{L}^{\tilde{Y},M}_{s})-f(Y^{i,M}_{s},\mathcal{L}^{Y,M}_{s})\big|^q+\big|g(\tilde{Y}^{i,M}_{s},\mathcal{L}^{\tilde{Y},M}_{s})-g(Y^{i,M}_{s},\mathcal{L}^{Y,M}_{s})\big|^{q}\Big)\mathrm{d}s.
\end{align}
 Using  the elementary inequality, the symmetry and \eqref{eqc4.47}, we compute  
\begin{align}\label{eqc343}
 \mathbb{E}  \mathbb{W}^{q}_{2}(\mathcal{L}^{X,M}_{s}, \mathcal{L}^{\tilde{Y},M}_{s})  
& \leq  
      \mathbb{E} |\Phi^{i,M}_{s}|^{q} 
  \leq    \mathbb{E} \Big|\Phi^{i,M}_{s\wedge \theta^{i,M}_{\Delta,\Delta_1}}\Big|^{q} + \frac{q2^{p-1} \delta C_{T} }{p}+\frac{2(p-q)C_{T} }{p\delta^{\frac{q}{p-q}}[\varphi^{-1}(H\Delta_1^{-\kappa})]^{p}} . 
\end{align}
  One observes  that  for any $t\in [0, \theta^{i,M}_{\Delta,\Delta_1}]$,
 $|\tilde{Y}^{i,M}_{t}|\vee |Y^{i,M}_{t}|\leq \varphi^{-1}(H\Delta_1^{-\kappa}).$ 
Then it follows from  \eqref{LL2.5}-\eqref{LL2.7} and Corollary \ref{cor2} that
\begin{align}\label{eqc3.44}
\mathcal{J}^{i,M} &\leq C_{\varphi^{-1}(H\Delta_1^{-\kappa})}\mathbb{E}\int_{0}^{t\wedge\theta^{i,M}_{\Delta,\Delta_1}}\left(|\tilde{Y}^{i,M}_{s}-Y^{i,M}_{s}|^q+\mathbb{W}^{q}_{2}(\mathcal{L}^{\tilde{Y},M}_{s},\mathcal{L}^{Y,M}_{s})\right)\mathrm{d}s\nn\
\\&\leq C_{\varphi^{-1}(H\Delta_1^{-\kappa})}\int_{0}^{t}\mathbb{E}|\tilde{Y}^{i,M}_{s}-Y^{i,M}_{s}|^{q}\mathrm{d}s
\leq C_T C_{\varphi^{-1}(H\Delta_1^{-\kappa}) }\Delta^{\frac{q}{3}},
\end{align}
where the last second inequality uses  the identical distribution property of $ |\tilde{Y}^{i,M}_{s}-Y^{i,M}_{s}|^q $, $i=1, \cdots,
M$.
Thanks to $X_0\in L^{p}_{0}$, it follows from \eqref{eqc4.47+} that
\begin{align}\label{LCY3.43}
\mathbb{E} \big|\Phi^{i,M}_{0 }\big|^{q}&= \mathbb{E}\Big(I_{\{\theta^{i,M}_{\Delta,\Delta_1}=0\}}
\big|\Phi^{i,M}_{0}\big|^{q}\Big)
 \leq \frac{q2^{p-1} \delta C_{T}}{p}+\frac{2(p-q)C_T}{p\delta^{\frac{q}{p-q}}\big(\varphi^{-1}(H\Delta_1^{-\kappa})\big)^{p}}.
\end{align}
Substituting \eqref{eqc343}, \eqref{eqc3.44} and \eqref{LCY3.43} into  \eqref{eqc342}  and then using the Gronwall inequality  yields that
\begin{align*}
\mathbb{E}\Big|\Phi^{i,M}_{t\wedge\theta^{i,M}_{\Delta,\Delta_1}}\Big|^{q}
&\leq  C\int_{0}^{t}\mathbb{E} \Big|\Phi^{i,M}_{s\wedge\theta^{i,M}_{\Delta,\Delta_1}}\Big|^{q}\mathrm{d}s+C_T\Big(  \delta + { \delta^{-\frac{q}{p-q}}[\varphi^{-1}(H\Delta_1^{-\kappa})]^{-p}}+C_{\varphi^{-1}(H\Delta_1^{-\kappa}) }\Delta^{\frac{q}{3}}\Big)  \nn\\
&\leq    C_T\Big(  \delta + { \delta^{-\frac{q}{p-q}}[\varphi^{-1}(H\Delta_1^{-\kappa})]^{-p}}+C_{\varphi^{-1}(H\Delta_1^{-\kappa}) }\Delta^{\frac{q}{3}}\Big). 
\end{align*}
Inserting the above inequality into \eqref{eqc4.47}, we derive that for any $0\leq t\leq T$,
\begin{align*}
  \mathbb{E}|\Phi^{i,M}_t|^{q}\leq C_{T}\Big(  \delta + { \delta^{-\frac{q}{p-q}}[\varphi^{-1}(H\Delta_1^{-\kappa})]^{-p}}
  +C_{\varphi^{-1}(H\Delta_1^{-\kappa})}\Delta^{\frac{q}{3}}\Big). 
\end{align*}
%On the other hand, by the similar way as \eqref{eq3.32}, for any $t\in [t_k, t_{k+1}]$, due to Theorem \ref{le2.3}, we have
%$ \mathbb{E}|X^{i,M}_{t}-X^{i,M}_{t_k}|^{q}  \leq   C_T\Delta^{\frac{q }{3}}.$ 
Using the elementary inequality  and Corollary \ref{cor2} yields that for any $t\in [0, T]$,
\begin{align*}
 \mathbb{E}|X^{i,M}_{t }-Y^{i,M}_{t }|^{q}&\leq 2^{q-1}\Big( \mathbb{E}|\Phi^{i,M}_t|^{q}+ \mathbb{E}|\tilde{Y}^{i,M}_{t_k}-Y^{i,M}_{t}|^{q}\Big) 
 \nn\
 \\&\leq C_{T}\Big(  \delta + { \delta^{-\frac{q}{p-q}}[\varphi^{-1}(H\Delta_1^{-\kappa})]^{-p}}
  +C_{\varphi^{-1}(H\Delta_1^{-\kappa})}\Delta^{\frac{q}{3}}\Big).
  \end{align*}
For any $\varepsilon>0$, choose a   $\delta>0$  such that $C_{T}\delta  <\varepsilon/3$, and then choose a  $\Delta_1\in(0,1]$ sufficiently  small such that
$  {  C_{T}}{ \delta^{ -{q}/{(p-q)}}[\varphi^{-1}(H\Delta_1^{-\kappa})]^{-p}} < {\varepsilon}/{3}.$ 
Furthermore, for the fixed $\Delta_1$, choose a  $\Delta_2\in (0,\Delta_1]$ small enough such that for any $\Delta\in (0,\Delta_2]$, 
$  C_{T}C_{\varphi^{-1}(H\Delta_1^{-\kappa})}\Delta^{ \frac{q}{3}}< {\varepsilon}/{3}.$ 
 As a result,  for any $q\in [2,p_0\wedge p)$, 
 \begin{align}\label{eqf3.26}
\lim\limits_{\Delta\rightarrow0}\sup_{ M\geq 1}\sup_{1\leq i\leq M}\sup_{0\leq t\leq T}\mathbb{E}\big|X^{i,M}_{t}-Y^{i,M}_{t}\big|^{q}=0.
 \end{align}
 Furthermore, if $p_0<p$, utilizing the H\"older inequality  and  Lemma \ref{le2.3} and Theorem \ref{le3.4} yields that \eqref{eqf3.26} holds for  $ p_0\leq q<p$.  Similarly, \eqref{eqf3.26}  holds  for $0< q<2$ directly by virtue of  the H\"older inequality again.  These implies that  the desired result \eqref{eqf3.26} holds for any $q\in (0,p)$. On the other hand, by virtue of  Lemma  \ref{L6.14} and Theorem \ref{le3.4} it follows that for any $q\in (0,p)$ and any $t\in [0, T]$,
  \begin{align*}
  \mathbb{E}|\bar{Y}^{i,M}_{t }-Y^{i,M}_{t }|^{q}=\mathbb{E}|\bar{Y}^{i,M}_{t }-\pi_{\Delta}(\bar{Y}^{i,M}_{t })|^{q}\leq  { C_T}{\big[\varphi^{-1}(H\Delta^{-\kappa})\big]^{-(p-q)}}\rightarrow 0,~~~ \Delta\rightarrow 0.
\end{align*}
Combining the above inequality with \eqref{eqf3.26} implies that the  desired assertion holds.  
The proof is complete.
\end{proof}  

 By virtue of the propagation of chaos (Lemma  \ref{lec3.4}),  we further derive the convergence of the  TEM numerical solution to the exact solution of   \eqref{eq3}.  
\begin{theorem}\label{thm4.8}
Let Assumptions \ref{ass1}-\ref{ass3}  and $X_0\in L^{p}_{0}$ hold with $p,p_0>2$. Then for any $q\in [0, p)$,
\begin{align*}
\lim_{\substack{\Delta\rightarrow0\\ M\rightarrow\infty}}\sup_{1\leq i\leq M}\sup_{0\leq t\leq T}\Big(\mathbb{E}\big|X^{i}_{t}-Y^{i,M}_{t}\big|^{q}\vee \mathbb{E}\big|X^{i}_{t}-\bar{Y}^{i,M}_{t}\big|^{q}\Big)=0,  ~~\forall T>0.
\end{align*} 
\end{theorem}
 
 \begin{rem}
Theorem \ref{thm4.8} implies that the limit processes $\Delta\rightarrow0$ and $M\rightarrow\infty$ are not required to be taken sequentially. This is because, as established in Theorem \ref{Lemc3.6}, the convergence of the TEM numerical solution to the exact solution for the IPS holds uniformly with respect to $M$. 
\end{rem}  
\section{Strong Convergence Rate}\label{s-cr}

{This section is dedicated to establishing the convergence rate for the TEM scheme. We impose an assumption to describe the  polynomial growth rate of the coefficients, which is slightly stronger than what is required for the conclusion of convergence.}
\begin{assp}\label{ass4}
There exists a pair of positive constants $\alpha$ and $L_{f}$ such that
\begin{align}\label{eql4.1}
\big|f(x_1,\mu_1)-f(x_2,\mu_2)\big|\leq L_{f}\big(|x_1-x_2|(1+|x_1|^{\alpha}+|x_2|^{\alpha})+\mathbb{W}_{2}(\mu_1,\mu_2)\big)
\end{align}
for any $x_1, x_2\in \RR^{d}$ and $\mu_1,\mu_2\in \mathcal{P}_{2}(\RR^{d})$.
\end{assp}
\begin{rem}\label{rm5.1}
{ One observes  that  Assumption \ref{ass1} follows directly from  Assumption \ref{ass4}  with $K_{R}= L_{f}(1+2R^{\alpha})$ for any $R>0$ and $K=L_{f}$.}
Furthermore, by Assumptions \ref{ass2} and \ref{ass4}, one knows that there exists a constant $L_g$ such that 
\begin{align}\label{eql4.2}
\big|g(x_1,\mu_1)-g(x_2,\mu_2)\big|^2&\leq L_g\big(|x_1-x_2|^{2}(1+|x_1|^{\alpha}+|x_2|^{\alpha})+\mathbb{W}^{2}_{2}(\mu_1,\mu_2)\big)\\
  \big|g(x,\mu)\big|^2&\leq L_g \left(1+|x|^{\alpha+2}+\mu(|\cdot|^2)\right).\label{eql4.3}
\end{align}
for any $  x_1,x_2, x\in \RR^{d}$ and $ \mu_1,\mu_2, \mu \in \mathcal{P}_{2}(\RR^{d})$.
\end{rem}
\begin{rem}\label{rem5.2}
Due to \eqref{eql4.1} and \eqref{eql4.2}, choose $\varphi(u)=2(L_f \vee L_g)(1+u^{\alpha})$ for any $u>0$ while the inverse function $\varphi^{-1}(u)=\big(u/2(L_f \vee L_g)-1\big)^{1/\alpha}$ for any $u>2(L_f \vee L_g)$. Thus,  for any given~$\Delta\in(0,1]$, define the truncation mapping  
\begin{align}\label{trun}
\pi_{\Delta}(x)=\displaystyle\left\{\begin{array}{lcl} x,~~&~ |x|\leq \Big(\frac{H\Delta^{-\kappa}}{2(L_f \vee L_g)}-1\Big)^{\frac{1}{\alpha}} ,\\
\displaystyle \Big(\frac{H\Delta^{-\kappa}}{2(L_f \vee L_g)}-1\Big)^{\frac{1}{\alpha}} \frac{x}{|x|},~~&~ |x|> \Big(\frac{H\Delta^{-\kappa}}{2(L_f \vee L_g)}-1\Big)^{\frac{1}{\alpha}} 
\end{array}\right.
\end{align}
for $x\in \RR^d$,  where  $H\geq 1+ [1\vee K\vee L_3\vee |f(0,\boldsymbol{\delta}_0)|\vee|g(0,\boldsymbol{\delta}_0)|^2)\vee (2 (L_f \vee L_g)]$ and $\kappa\in (0, 1/3]$ will be specified in the below Theorem \ref{th5.5}. 
\end{rem}

 With the help of the explicit form of $\pi_{\Delta}$, we will  establish   the optimal  convergence rate of $\mathbb{E}|X^{i,M}_{t}-Y^{i,M}_{t}|^q$. Owing to Assumption \ref{ass4} we can obtain  the more precise bound  of $\mathbb{E}\big|\tilde{Y}^{i,M}_{t}-Y^{i,M}_{t}\big|^{q}$ than that in Corollary \ref{cor2}. % and then the bound of $\mathbb{E}|X^{i,M}_{t}-\tilde{Y}^{i,M}_{t}|^q$.}

\begin{lemma}\label{lem3.6}
Let Assumptions \ref{ass2}-\ref{ass4} and $X_0\in L^{p}_{0}$ hold with $p\geq \alpha+2$. Then for any  $q\in [2,2p/(\alpha+2)]$,
\begin{align*}
\sup_{M\geq 1} \sup_{1\leq i\leq M}\sup_{0\leq t\leq T}\mathbb{E}\big|\tilde{Y}^{i,M}_{t}-Y^{i,M}_{t}\big|^{q}\leq C_{T}\Delta^{\frac{q}{2}},~~~\forall T>0.
\end{align*}
\end{lemma}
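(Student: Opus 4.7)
\textbf{Proof plan for Lemma \ref{lem3.6}.}

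The starting point is that, by construction \eqref{eq4.35}, on each subinterval $[t_k,t_{k+1})$ one has
\[
\tilde Y^{i,M}_t-Y^{i,M}_t=\tilde Y^{i,M}_t-Y^{i,M}_{t_k}=f(Y^{i,M}_{t_k},\mathcal L^{Y,M}_{t_k})(t-t_k)+g(Y^{i,M}_{t_k},\mathcal L^{Y,M}_{t_k})(B^i_t-B^i_{t_k}),
\]
so that by the elementary inequality and the BDG inequality (applied to a deterministic integrand, which reduces to a Gaussian moment computation),
\[
\mathbb E\bigl|\tilde Y^{i,M}_t-Y^{i,M}_t\bigr|^{q}\le C_q\Bigl(\Delta^{q}\,\mathbb E\bigl|f(Y^{i,M}_{t_k},\mathcal L^{Y,M}_{t_k})\bigr|^{q}+\Delta^{q/2}\,\mathbb E\bigl|g(Y^{i,M}_{t_k},\mathcal L^{Y,M}_{t_k})\bigr|^{q}\Bigr).
\]
So the whole matter reduces to controlling the two $q$-th moments above.

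The next step is to unwind the polynomial growth using Assumption \ref{ass5} and \eqref{eql4.3}: applied at the base point $(0,\boldsymbol\delta_{0})$ they yield
\[
\bigl|f(y,\mu)\bigr|^{q}\le C\Bigl(1+|y|^{q(\alpha+1)}+\mu(|\cdot|^{2})^{q/2}\Bigr),\qquad
\bigl|g(y,\mu)\bigr|^{q}\le C\Bigl(1+|y|^{q(\alpha+2)/2}+\mu(|\cdot|^{2})^{q/2}\Bigr).
\]
For the diffusion term, the hypothesis $q\in[2,2p/(\alpha+2)]$ is tailored so that $q(\alpha+2)/2\le p$; hence Lemma \ref{le3.4} together with \eqref{eqp4.8} and Jensen give $\mathbb E|g(Y^{i,M}_{t_k},\mathcal L^{Y,M}_{t_k})|^{q}\le C_{T}$, and the $\Delta^{q/2}$ piece already has the desired order.

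The drift term is the only real obstacle, because $q(\alpha+1)$ can exceed $p$ and the raw moment $\mathbb E|Y^{i,M}_{t_k}|^{q(\alpha+1)}$ need not be bounded uniformly in $\Delta$. This is exactly where the truncation step in \eqref{eq2.1} pays off: since $Y^{i,M}_{t_k}=\pi_{\Delta}(\bar Y^{i,M}_{t_k})$, one has the deterministic bound $|Y^{i,M}_{t_k}|\le\varphi^{-1}(h(\Delta))$. With the choice of $\varphi$ made in Remark \ref{rem5.2} this gives $|Y^{i,M}_{t_k}|^{q\alpha}\le C\Delta^{-q\kappa}$ (for $\Delta$ small). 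Splitting
\[
|Y^{i,M}_{t_k}|^{q(\alpha+1)}=|Y^{i,M}_{t_k}|^{q\alpha}\cdot|Y^{i,M}_{t_k}|^{q}\le C\Delta^{-q\kappa}|Y^{i,M}_{t_k}|^{q}
\]
and invoking Lemma \ref{le3.4} (note $q\le p$ automatically) yields
\[
\Delta^{q}\,\mathbb E\bigl|f(Y^{i,M}_{t_k},\mathcal L^{Y,M}_{t_k})\bigr|^{q}\le C_{T}\Delta^{q(1-\kappa)}\le C_{T}\Delta^{q/2},
\]
the last step using $\kappa\le 1/3\le 1/2$. Combining the two estimates and taking the supremum over $i$ and over $t\in[0,T]$ gives the claim. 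The main obstacle, as indicated, is the super-linear growth of $f$, which cannot be tamed by plain moment bounds but is exactly absorbed by the $\Delta^{-\kappa}$ inflation coming from the truncation.
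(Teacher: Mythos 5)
Your proposal is correct and follows essentially the same route as the paper: decompose the one-step increment, bound the diffusion term via the polynomial growth of $g$ using $q(\alpha+2)/2\le p$ together with the moment bounds of Lemmas \ref{lem2.2} and \ref{le3.4}, and absorb the super-linear drift through the truncation. The only cosmetic difference is that the paper invokes the pre-packaged bound \eqref{eq2.2} (giving $\Delta^{q}\mathbb{E}|f|^{q}\lesssim\Delta^{2q/3}$), whereas you re-derive the same $\Delta^{-q\kappa}$ inflation from the deterministic bound $|Y^{i,M}_{t_k}|\le\varphi^{-1}(h(\Delta))$; both land at $\le C_T\Delta^{q/2}$ since $\kappa\le 1/3$.
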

\begin{proof}~~Fix $T>0 $ and $q\in [2,2p/(\alpha+2)]$.
For any $t\in [t_{k},t_{k+1})\cap [0,T]$,   it follows from \eqref{eq4.35},  \eqref{eq2.2} and \eqref{eql4.3} that
\begin{align*}
\mathbb{E}\big|\tilde{Y}^{i,M}_{t}-Y^{i,M}_{t}\big|^{q}&\leq 2^{q-1}\Big(\mathbb{E}\big|f(Y^{i,M}_{t_{k}},\mathcal{L}^{Y,M}_{t_{k}})\big|^{q}\Delta^{q}+\mathbb{E}\big|g(Y^{i,M}_{t_{k}},\mathcal{L}^{Y,M}_{t_{k}})\big|^{q}\Delta^{\frac{q}{2}}\Big).
\nn\\
&\leq C\Delta^{\frac{2q}{3}}\Big({  1+} \mathbb{E}|Y^{i,M}_{t_{k}}|^{q}+\mathbb{E}\Big(\frac{1}{M}\sum_{i=1}^{M}|Y^{i,M}_{t_{k}}|^{2}\Big)^{\frac{q}{2}}\Big)\nn\
\\&~~~+C\Delta^{\frac{q}{2}}\Big(1+\mathbb{E}|Y^{i,M}_{t_{k}}|^{\frac{q(\alpha+2)}{2}}+\mathbb{E}\Big(\frac{1}{M}\sum_{i=1}^{M}|Y^{i,M}_{t_{k}}|^{2}\Big)^{\frac{q}{2}}\Big).\nn\
\end{align*}
Thanks to  $q(\alpha+2)/2\leq p$,  using \eqref{eq*Y}  and  Theorem \ref{le3.4}  implies  that
\begin{align*}
\mathbb{E}|\tilde{Y}^{i,M}_{t}-Y^{i,M}_{t}|^{q}&\leq  C\Delta^{\frac{q}{2}}\Big(1+\mathbb{E}|Y^{i,M}_{t_{k}}|^{q}+
\mathbb{E}|Y^{i,M}_{t_{k}}|^{\frac{q(\alpha+2)}{2}}\Big) 
\nn\\
&\leq C\Delta^{\frac{q}{2}}\Big(1+\big(\mathbb{E}|Y^{i,M}_{t_{k}}|^{p}\big)^{\frac{q}{p}}+\big(\mathbb{E}|Y^{i,M}_{t_{k}}|^{p}\big)^{\frac{q(\alpha+2)}{2p}}\Big)\leq C_T\Delta^{\frac{q}{2}}.
\end{align*}
\end{proof}
By the similar techniques as in Theorem \ref{Lemc3.6},  we yield   the optimal rate of the TEM numerical solutions approximating the exact solutions of the IPS corresponding to MV-SDE \eqref{eq3.1}. In order for completeness, we give the outline of the proof emphasizing the  difference from Theorem \ref{Lemc3.6}.

\begin{theorem}\label{th5.5}%\label{lem3.8}
Let Assumptions \ref{ass2}-\ref{ass4} and $X_0\in L^{p}_{0}$ { hold with $p\geq2(\alpha+2)\vee (4\alpha) $ and $p_0>2$}. Then for any   $q\in [2,p_0)\cap[2,p/((\alpha+2)\vee(2\alpha))]$, the TEM numerical solutions $ {Y}^{i,M}_{t}$ and $ \bar{Y}^{i,M}_{t}$ given by \eqref{eq2.1} with $\kappa\in [q\alpha/2(p-q),1/3]$ satisfy
\begin{align*}
\sup_{  M\geq 1}\sup_{1\leq i\leq M}\sup_{0\leq t\leq T}\Big(\mathbb{E}\big|X^{i,M}_{t}-Y^{i,M}_{t}\big|^{q}\vee
\mathbb{E}\big|X^{i,M}_{t}-\bar{Y}^{i,M}_{t}\big|^{q}\Big)\leq C_T\Delta^{\frac{q}{2}},~~~\forall T>0.
\end{align*}
%\begin{align*}
%\sup_{  M\geq 1}\sup_{1\leq i\leq M}\sup_{0\leq t\leq T}\mathbb{E}\big|X^{i,M}_{t}-\tilde{Y}^{i,M}_{t}\big|^{q}\leq C_{T}\Delta^{\frac{q}{2}},~~~\forall T>0.
%\end{align*}
\end{theorem}
\begin{proof}
  $p\geq2(\alpha+2)\vee (4\alpha) $ implies that $p\geq 3\alpha+2$. For any  
 given $ q\in[2,p_0)\cap [2,p/((\alpha+2)\vee(2\alpha))]$, we know $q\in [2,2p/(3\alpha+2)]$, which implies 
  $q\alpha/2(p-q)\in (0, 1/3]$. Now consider the TEM scheme \eqref{eq2.1} with the explicit form $\pi_{\Delta}$ defined by \eqref{trun} for $\kappa\in [q\alpha/2(p-q),1/3]$. For any $\Delta\in (0,1]$, define $\theta^{i,M}_{\Delta}=\tau^{i,M}_{\varphi^{-1}(H\Delta^{-\kappa})}\wedge\eta^{i,M}_{\Delta,\Delta}$, where $\tau^{i,M}_{\varphi^{-1}(H\Delta^{-\kappa})}$ and $\eta^{i,M}_{\Delta,\Delta}$ are defined by \eqref{eql2.9} and \eqref{eqpl3.43}, respectively.  
   By virtue of the explicit form of $\varphi$ in Remark \ref{rem5.2}, letting $\delta=\Delta^{\frac{q}{2}}$, one derives that 
   \begin{align}\label{eq*+32} \frac{q2^{p-1} \delta C_{T}}{p}+\frac{2(p-q)C_{T}}{p\delta^{\frac{q}{p-q}}[\varphi^{-1}(H\Delta^{-\kappa})]^{p}}\leq C_{T}\Delta^{{\frac{q}{2}}},
   \end{align}
   which together with \eqref{eqc4.47} in Theorem \ref{Lemc3.6} implies that for any $0\leq t\leq  T$, 
\begin{align}\label{4.4}
\mathbb{E}|\Phi^{i,M}_t|^{q} 
 \leq \mathbb{E}\Big|\Phi^{i,M}_{t\wedge \theta^{i,M}_{\Delta }}\Big|^{q}+ C_{T}\Delta^{{\frac{q}{2}}}.
\end{align}
  In \eqref{eq*J},  by \eqref{eql4.1}, \eqref{eql4.2} and \eqref{eq*Y},   utilizing the Young inequality  and the H\"older inequality we obtain that
\begin{align}\label{eq*+16}
 \mathcal{J}^{i,M}\leq& C\mathbb{E}\int_{0}^{t}\left(\big|\tilde{Y}^{i,M}_{s}-Y^{i,M}_{s}\big|^q\big(1+
 |\tilde{Y}^{i,M}_{s}|^{q\alpha}+|Y^{i,M}_{s}|^{q\alpha}\big)+\mathbb{W}^{q}_{2}
 \big(\mathcal{L}^{\tilde{Y},M}_{s},\mathcal{L}^{Y,M}_{s}\big)\right)\mathrm{d}s\nn\
\\ 
 \leq& C\int_{0}^{t}\Big(\mathbb{E}|\tilde{Y}^{i,M}_{s}-Y^{i,M}_{s}|^{2q}\Big)^{ {\frac{1}{2 }}}\left(\mathbb{E}\big(1+|\tilde{Y}^{i,M}_{s}|^{q\alpha}+|Y^{i,M}_{s}|^{q\alpha}
 \big)^{2}\right)^{\frac{1}{2}}\mathrm{d}s. 
\end{align}
Thanks to $2\leq q\leq p/((2\alpha)\vee(\alpha+2))$, utilizing the H\"older inequality,  by virtue of  Lemma \ref{lem3.6} and Theorem \ref{le3.4} we deduce that
\begin{align}
&\Big(\mathbb{E}|\tilde{Y}^{i,M}_{s}-Y^{i,M}_{s}|^{2q}\Big)^{\frac{1}{2}} \leq\Big(\mathbb{E}|\tilde{Y}^{i,M}_{s}-Y^{i,M}_{s}|^{\frac{2p}{\alpha+2}}\Big)^{\frac{q(\alpha+2)}{ 2p}}\leq C_{T}\Delta^{\frac{q}{2}},\label{eq*+36}\\
&\mathbb{E}\big(1+|\tilde{Y}^{i,M}_{s}|^{q\alpha}+|Y^{i,M}_{s}|^{q\alpha}\big)^{2} \leq 3\Big( 1+\big(\E|\tilde{Y}^{i,M}_{s}|^{p}\big)^{\frac{2q\alpha}{p}}+\big(\E| {Y}^{i,M}_{s}|^{p}\big)^{\frac{2q\alpha}{p}}\Big) \leq C_{T}.\label{eq*+37}
\end{align}
Consequently, we have $
\mathcal{J}^{i,M} \leq C_{T}\Delta^{\frac{q}{2}}. $
Inserting  this inequality, \eqref{eqc343} and \eqref{LCY3.43} (with $\Delta_1=\Delta$) into \eqref{eqc342}, using \eqref{eq*+32} and the Gronwall inequality arrives that 
\begin{align*}
\mathbb{E}\Big|\Phi^{i,M}_{t\wedge\theta^{i,M}_{\Delta}}\Big|^{q}
\leq  C\int_{0}^{t}\mathbb{E} \Big|\Phi^{i,M}_{s\wedge\theta^{i,M}_{\Delta}}\Big|^{q}\mathrm{d}s+C_T \Delta^{\frac{q}{2}} \leq C_T \Delta^{\frac{q}{2}} . 
\end{align*} 
This, together with \eqref{4.4}, implies $
\mathbb{E}|\Phi^{i,M}_t|^{q}\leq C_T\Delta^{\frac{q}{2}}.$   This together with  Lemma \ref{lem3.6} implies
\begin{align}\label{4.10*}
\sup_{  M\geq 1}\sup_{1\leq i\leq M}\sup_{0\leq t\leq T}\mathbb{E}\big|X^{i,M}_{t}-Y^{i,M}_{t}\big|^{q}\leq C_T\Delta^{\frac{q}{2}},~~~\forall ~T>0.
\end{align}
Recalling the definition   $\varphi^{-1}$ in Remark \ref{rem5.2}, by virtue of Lemma \ref{L6.14}, we obtain
\begin{align}\label{eqf6.40}
\sup_{  M\geq 1}\sup_{1\leq i\leq M}\sup_{0\leq t\leq T}\mathbb{E}\big|\bar{Y}^{i,M}_{t}-Y^{i,M}_{t}\big|^{q}%& = \frac{C}{\big(\varphi^{-1}(H\Delta^{-\kappa})\big)^{p-q}}=\big(H\Delta^{-\kappa}/2(L_{f}\vee C)-1\big)^{-\frac{(p-q)}{\alpha }}\nn\
 & \leq C\Delta^{\frac{\kappa (p-q)}{\alpha }}\leq C\Delta^{\frac{q}{2}},
\end{align} 
where the last inequality uses the fact $2\leq q\leq 2p/(3\alpha+2)$. Combining \eqref{4.10*} and \eqref{eqf6.40} yields the desired assertion. The proof is complete.
\end{proof}

 By virtue of the propagation of chaos in   Lemma  \ref{lec3.4},   we obtain the convergence rate of the TEM numerical solutions to the exact solutions of the {N-IPS}.
\begin{theorem}[Numerical propagation of chaos]\label{th5.5}
Let Assumptions \ref{ass2}-\ref{ass4} and $X_0\in L^{p}_{0}$ hold with $p\geq2(\alpha+2)\vee (4\alpha)$ and  $p_0>2$. Then for any   $q\in [2,p_0)\cap[2,p/((\alpha+2)\vee(2\alpha))]$,  the numerical solutions $ {Y}^{i,M}_{t}$ and $ \bar{Y}^{i,M}_{t}$ given by \eqref{eq2.1} with $\kappa\in [q\alpha/2(p-q),1/3]$ satisfies
$$
\sup_{1\leq i\leq M}\sup_{0\leq t\leq T}\Big(\mathbb{E}|X^{i}_{t}-Y^{i,M}_{t}|^{q}\vee \mathbb{E}|X^{i}_{t}-\bar{Y}^{i,M}_{t}|^{q}\Big)\leq  C_{q, p, d,T} {\Upsilon_{M,q,p,d}+C_{T}\Delta^{\frac{q}{2}},~~~\forall T>0,}
$$
where  $\Upsilon_{M,q,p,d}$ is given by Lemma \ref{Lem3.1}.
\end{theorem}
\section{Approximation of Invariant Measure}\label{sec55}
{This section aims to investigate the approximation of the invariant measure for MV-SDE \eqref{eq3.1} via the TEM scheme \eqref{eq2.1}.  Subsection \ref{sb6.1} estimates the non-asymptotic error bound between the distribution of numerical solution at any finite time and the invariant measure of MV-SDE under the ergodicity assumption. Subsection \ref{sb6.2} studies the  exponential ergodicity  of the numerical solution. Subsection \ref{sb6.3} gives  a uniform-in-time  convergence rate of the numerical solution to the exact solution of MV-SDE. Finally, we derive the error bound of the exact and numerical  invariant measures.}

To characterize the exponential ergodicity of the exact and numerical solutions, we first introduce the foundational semigroups.
For  the exact solution, we define an  operator $\mathrm{P}^{*}_{t}$  on $\mathcal{P}(\RR^{d})$ as
$\mathrm{P}^{*}_{t}\mu_0=\mathcal{L}_{X_t}$ with  $\mathcal{L}_{X_0}=\mu_0\in \mathcal{P}(\RR^{d})$. Referring to \cite[p.598, (1.10)]{WFY2018}, one derives that the family of operators $\{\mathrm{P}^{*}_{t}\}_{t\geq 0}$ satisfies that the semigroup property
$\mathrm{P}^*_{s+t}=\mathrm{P}^*_{t}\mathrm{P}^*_{s}$ for any~$s,t\geq0$. For the numerical solution $\bar{Y}^M_{t_{k}}=(\bar{Y}^{1,M}_{t_{k}}, \cdots,\bar{Y}^{M,M}_{t_{k}})\in \RR^{d\times M},~k=1,\cdots$,   generated  by the TEM scheme \eqref{eq2.1},  we define an operator $\mathbf{P}^{\Delta,M,*}_{t_{k}}:~ \big(\mathcal{P}(\RR^{d})\big)^M\rightarrow \big(\mathcal{P}(\RR^{d})\big)^M$ by 
\begin{align*}%\label{5.1*}
\mathbf{P}^{\Delta,M,*}_{t_{k}} \big( \mu_0,\cdots, \mu_0\big) = \big(\mathcal{L}_{\bar{Y}^{1,M}_{t_{k}}},\cdots, \mathcal{L}_{\bar{Y}^{M,M}_{t_{k}}}\big),
\end{align*}
 where $\big(\mathcal{P}(\RR^{d})\big)^M=\underbrace{ \mathcal{P}(\RR^{d}) \times \cdots\times \mathcal{P}(\RR^{d}) }_{M}$ and  $\mu_0\in \mathcal{P}(\RR^{d})$ is the initial distribution for each $ \bar{Y}^{i}_{0} $,~$  i=1,\cdots, M$. 
For any $k\geq0$ and $1\leq i\leq M$, $\bar{Y}^{i,M}_{t_{k+1}}$ is obtained through  $k+1$ iterations based on  TEM scheme \eqref{eq2.1}.
Let  $\bar{U}^{i,M}_{0}=\bar{Y}^{i,M}_{t_{k}}$, $i=1,\cdots, M$.  Then $U^{i,M}_{t_1} $ is given by performing the iteration according to \eqref{eq2.1} as follows
 \begin{equation*}
 \begin{cases} 
 &\bar{U}^{i,M}_{0}= U^{i,M}_{ 0}=\pi_{\Delta}(\bar{U}^{i,M}_{ 0})= {Y}^{i,M}_{t_{k}},~~~~i=1,\cdots, M,\\
&\bar{U}^{i,M}_{t_1}=Y^{i,M}_{t_k}+f(Y^{i,M}_{t_k},\mathcal{L}^{Y,M}_{t_k})\Delta+g(Y^{i,M}_{t_k},\mathcal{L}^{Y,M}_{t_k})\Delta B^{i}_{t_1}.
 \end{cases}
 \end{equation*}
Since $\Delta B^{i}_{t_k}$ and $\Delta B^{i}_{t_1}$ are~i.i.d.,  by the exchangeability of the TEM scheme, we obtain that $\mathcal{L}_{\bar{Y}^{i,M}_{t_{k+1}}}=\mathcal{L}_{\bar{U}^{i,M}_{t_1}}$, $i=1,\cdots, M$. Then, due to the definition of the operator $\mathbf{P}^{\Delta,M,*}$, we have 
\begin{align*}
\mathbf{P}^{\Delta,M,*}_{t_{k+1}}\big( \mu_0,\cdots, \mu_0\big) &= \big(\mathcal{L}_{\bar{Y}^{1,M}_{t_{k+1}}} ,\cdots, \mathcal{L}_{\bar{Y}^{M,M}_{t_{k+1}}}\big)= \big(\mathcal{L}_{\bar{U}^{1,M}_{t_{ 1}}} ,\cdots, \mathcal{L}_{\bar{U}^{M,M}_{t_{ 1}}}\big)=\mathbf{P}^{\Delta,M,*}_{t_{1}}  \big(\mathcal{L}_{\bar{U}^{1,M}_{0}} ,\cdots, \mathcal{L}_{\bar{U}^{M,M}_{0}}\big)\\ &=\mathbf{P}^{\Delta,M,*}_{t_{1}} \big(\mathcal{L}_{\bar{Y}^{1,M}_{t_{k}}} ,\cdots, \mathcal{L}_{\bar{Y}^{M,M}_{t_{k}}}\big) =\mathbf{P}^{\Delta,M,*}_{t_{1}}\mathbf{P}^{\Delta,M,*}_{t_{k}}\big( \mu_0,\cdots, \mu_0\big),
\end{align*}
This  implies that  $
\mathbf{P}^{\Delta,M,*}_{t_{k+1}}=\mathbf{P}^{\Delta,M,*}_{t_{1}}\mathbf{P}^{\Delta,M,*}_{t_{k}} 
$
holds, which indicates that the sequence $\{\mathbf{P}^{\Delta,M,*}_{t_k}\}_{k\geq 1}$ indeed forms a discrete semigroup taking values on $\big(\mathcal{P}(\RR^{d})\big)^M$ satisfying  
$$
\mathbf{P}^{\Delta,M,*}_{t_{k+l}}=\mathbf{P}^{\Delta,M,*}_{t_{k}}
\mathbf{P}^{\Delta,M,*}_{t_{l}},~ \forall~ k,l\geq 0 .
$$
Owing to the same initial distribution,   each component of the numerical solution  of the IPS  share the same distribution, namely,
$ \mathcal{L}_{\bar{Y}^{1,M}_{t_{k}}}=\cdots=\mathcal{L}_{\bar{Y}^{M,M}_{t_{k}}}$ for any $k\geq 1$.
 Thus, $$
\mathbf{P}^{\Delta,M,*}_{t_{k}} \big( \mu_0,\cdots, \mu_0\big) = \big(\mathcal{L}_{\bar{Y}^{1,M}_{t_{k}}},\cdots, \mathcal{L}_{\bar{Y}^{1,M}_{t_{k}}}\big).
$$
For each $k\geq 0$, define an operator $\mathrm{P}^{\Delta,M,*}_{t_{k}}: ~ \mathcal{P}(\RR^{d}) \rightarrow \mathcal{P}(\RR^{d}) $ by 
%\begin{align}\label{5.1*}
 $\mathrm{P}^{\Delta,M,*}_{t_{k}}(\mu_0)= \mathcal{L}_{\bar{Y}^{1,M}_{t_{k}}}.$ %\end{align} 
Since operators $ \mathbf{P}^{\Delta,M,*} $ and $\mathrm{P}^{\Delta,M,*}$ are obviously similar, we do not distinguish between them in what follows, thereby endowing operator $\mathrm{P}^{\Delta,M,*}$ with the semigroup property on $\mathcal{P}(\RR^{d})$. 

 %\begin{align}\label{5.3*}
%\mathrm{P}^{\Delta,M,*}_{t_{k+1}}=\mathrm{P}^{\Delta,M,*}_{t_{k}}
%\mathrm{P}^{\Delta,M,*}_{t_{l}},~~~~\forall~ k,l\geq 0 .
%\end{align} 
 To proceed with our analysis, in this section, we require $\kappa\in (0,1/3)$ and define  \begin{equation}\label{eq*+25}\theta=\frac{1}{3}-\kappa .\end{equation}  in the definition of the  truncation  mapping given by \eqref{eqq4.1}.
\subsection{The Non-asymptotic  Error Estimate }\label{sb6.1}
{In this subsection, we derive a non-asymptotic error bound  between the distribution of numerical solution and the exact invariant measure of MV-SDE.   This is achieved by combing the finite-time error bound between the numerical and exact solutions with the exponential ergodicity of the exact solution. 
We first impose the following assumption for  MV-SDE \eqref{eq3.1}.}

\begin{assp}\label{ass5}
There exist  a  constant~$p\geq 2$ and a pair of constants  $\lambda_1>\lambda_2\geq0$ such that
\begin{align*}
 2x^{T}f(x,\mu)+(p-1)|g(x,\mu)|^{2}\leq-\lambda_1|x|^{2}+\lambda_2\mu(|\cdot|^2)+C
\end{align*}
for any $x\in \RR^{d}$ and $\mu\in \mathcal{P}_{2}(\RR^{d})$.
\end{assp}

Under Assumption \ref{ass5},
using the similar techniques as in   \cite[Theorem 3.1]{WFY2018},  we can  obtain the uniform-in-time  moment boundedness of the exact solutions to MV-SDE \eqref{eq3.1} and to the IPS \eqref{eq2}, respectively. We omit the proof to avoid duplication.
\begin{lemma}\label{Le5.1}
Let Assumptions  \ref{ass1}, \ref{ass2}, \ref{ass5}  and $X_0\in {  L^{p}_{0}}$ hold. Then 
$$
\sup_{t\geq 0}\mathbb{E}|X_{t}|^{p}\leq C,~~~~
\sup_{M\geq 1}\sup_{1\leq i\leq M}\sup_{t\geq 0}\mathbb{E}|X^{i,M}_{t}|^{p}\leq C.
$$ 
%where the constant $C$ is independent of $t$.
\end{lemma} 

%Furthermore, applying Lemma \ref{L6.14} with $r_1=r_2=p$ and $r=q$, we directly obtain the error of $\mathbb{E}\big|X^{i,M}_{t_{k}}-\pi_{\Delta}(X^{i,M}_{t_{k}})\big|^q$ as follows.
%\begin{cor}\label{L6.16}
% Let Assumptions  \ref{ass1}, \ref{ass2}, \ref{ass5} and $X_0\in {  L^{p}_{0}}$  hold with $p>2$. Then for any $q\in [2,p)$ and $\Delta\in (0,1]$, the exact solution to the IPS satisfies 
%\begin{align*}
%\sup_{M\geq 1}\sup_{1\leq i\leq M}\sup_{k\geq0}\mathbb{E}\big|X^{i,M}_{t_{k}}-\pi_{\Delta}(X^{i,M}_{t_{k}})\big|^q\leq C\Delta^{\frac{\kappa (p-q)}{\alpha}},
%\end{align*}
%where the constant $C$ is independent of  constants $\Delta$, $k$ and $M$.
%\end{cor}
%%
%{Next,   we  analyze  the uniform-in-time moment boundedness of the  numerical particle  average in order for that of the numerical solutions. } 

 Now we   establish the uniform-in-time moment boundedness of the numerical solution defined by \eqref{eq2.1}.  
\begin{theorem}\label{L7.4}
Let Assumptions  \ref{ass1}, \ref{ass2}, \ref{ass5} and $X_0\in { L^{p}_{0}}$ hold. Then there exists a  $\Delta_1^*\in (0,1]$ such that for any $\Delta\in (0,\Delta_1^*]$,  the TEM numerical solution given by  \eqref{eq2.1} satisfies  
 \begin{align*}
\sup_{M\geq 1} \sup_{1\leq i\leq M}\sup_{\Delta\in(0,\Delta_1^*]}\sup_{ t\geq0}  \Big(\mathbb{E}|Y^{i,M}_{t }|^{p}\vee \mathbb{E}|\bar{Y}^{i,M}_{t }|^{p}\Big)\leq C.
 \end{align*}
 %\begin{align*}
%\sup_{M\geq 1} \sup_{1\leq i\leq M}\sup_{\Delta\in(0,\Delta_1^*]}\sup_{ t\geq0}  \Big(\mathbb{E}|Y^{i,M}_{t }|^{p}\vee \mathbb{E}|\bar{Y}^{i,M}_{t }|^{p}\vee\mathbb{E}|\tilde{Y}^{i,M}_{t}|^{\tilde{p}} \Big)\leq C.
% \end{align*}
 %where  $C$ is a positive constant independent of $M$, $\Delta$ and $k$.
 \end{theorem} 
\begin{proof} 
For convenience,  define  $\displaystyle\varrho=\frac{p(\lambda_1-\lambda_2)}{8}  >0$. For any $\Delta\in (0,1]$, using the It\^o formula, we derive from \eqref{eq4.35} and Assumption \ref{ass5} that for any $t\in[t_{k},t_{k+1})$,
\begin{align}\label{eqqc6.6}
 \mathbb{E}\Big(e^{\varrho t }|\tilde{Y}^{i,M}_{t}|^{p}\Big) 
&\leq \mathbb{E}\Big(e^{\varrho t_k }|Y^{i,M}_{t_{k}}|^{p}\Big)
+\varrho\mathbb{E}\int_{t_{k}}^{t}e^{\varrho s }
|\tilde{Y}^{i,M}_{s}|^{p}\mathrm{d}s\nn\\
&~~~+\frac{p}{2}\mathbb{E}\int_{t_{k}}^{t}e^{\varrho s}|\tilde{Y}^{i,M}_{s}|^{p-2}
\Big[2(\tilde{Y}^{i,M}_{s})^{T}f(Y^{i,M}_{t_{k}},\mathcal{L}^{Y,M}_{t_{k}})+(p-1)\big|g(Y^{i,M}_{t_{k}},\mathcal{L}^{Y,M}_{t_{k}})
\big|^{2}\Big]\mathrm{d}s\nn\\
&\leq e^{\varrho t_k}\mathbb{E}|Y^{i,M}_{t_{k}}|^{p}
+\varrho\int_{t_{k}}^{t}e^{\varrho s}
\mathbb{E}|\tilde{Y}^{i,M}_{s}|^{p}\mathrm{d}s\nn\\
&~~~+\frac{p }{2}\mathbb{E}\int_{t_{k}}^{t}e^{\varrho s}|\tilde{Y}^{i,M}_{s}|^{p-2}
\big(-\lambda_1| Y^{i,M}_{t_{k}}|^2 +\lambda_2\mathcal{L}^{Y,M}_{t_{k}}(|\cdot|^2)+C\big)\mathrm{d}s\nn\
\\&~~~+p\mathbb{E}\int_{t_{k}}^{t}e^{\varrho s}|\tilde{Y}^{i,M}_{s}|^{p-2}|
\tilde{Y}^{i,M}_{s}-Y^{i,M}_{t_{k}}| |f(Y^{i,M}_{t_{k}},\mathcal{L}^{Y,M}_{t_{k}}) |\mathrm{d}s. 
\end{align}
One notices from the elementary inequality that 
for any $\delta>0$ (defined latter) 
$$
| Y^{i,M}_{t_{k}}|^2 \geq (1-\delta)
|\tilde{Y}^{i,M}_{s}|^2+\Big(1-\frac{1}{\delta}\Big)
|\tilde{Y}^{i,M}_{s}-Y^{i,M}_{t_{k}}|^2\geq (1-\delta)
|\tilde{Y}^{i,M}_{s}|^2 -\frac{1}{\delta} 
|\tilde{Y}^{i,M}_{s}-Y^{i,M}_{t_{k}}|^2. 
$$
Using the Young inequality gives that
\begin{align*}
\frac{p C}{2}\mathbb{E}\int_{t_{k}}^{t}e^{\varrho s}|\tilde{Y}^{i,M}_{s}|^{p-2}\mathrm{d}s\leq \frac{\varrho}{2}\mathbb{E}\int_{t_{k}}^{t}e^{\varrho s}|\tilde{Y}^{i,M}_{s}|^{p}\mathrm{d}s+C\Delta.
\end{align*}
Inserting the above two inequalities into \eqref{eqqc6.6}  and dividing by $e^{\varrho t}$ yield 
\begin{align}\label{eq*2}
\mathbb{E}|\tilde{Y}^{i,M}_{t}|^{p}&\leq e^{\varrho(t_k-t)}\mathbb{E}|Y^{i,M}_{t_{k}}|^{p}
-\Big(\frac{p }{2} (1- \delta)\lambda_1-\frac{3\varrho}{2}\Big)\int_{t_{k}}^{t}
e^{\varrho  (s-t)}
\mathbb{E}|\tilde{Y}^{i,M}_{s}|^{p}\mathrm{d}s+C\Delta\nn\\
&~~+\frac{p  \lambda_1}{2\delta} \int_{t_{k}}^{t}e^{\varrho  (s-t)}\mathbb{E}\Big( |\tilde{Y}^{i,M}_{s}|^{p-2}
|\tilde{Y}^{i,M}_{s}-Y^{i,M}_{t_{k}}|^2\Big) \mathrm{d}s\nn\\
&~~ +\frac{p  \lambda_2}{2 }\int_{t_{k}}^{t} e^{\varrho  (s-t)} \mathbb{E}\Big( |\tilde{Y}^{i,M}_{s}|^{p-2}\mathcal{L}^{Y,M}_{t_{k}}(|\cdot|^2) \Big)\mathrm{d}s \nn\\
&~~+p \int_{t_{k}}^{t}e^{\varrho  (s-t)} \mathbb{E}\Big(|\tilde{Y}^{i,M}_{s}|^{p-2}|
\tilde{Y}^{i,M}_{s}-Y^{i,M}_{t_{k}}| |f(Y^{i,M}_{t_{k}},\mathcal{L}^{Y,M}_{t_{k}})
 |\Big)\mathrm{d}s.\end{align}
Now we estimate the terms on the right side of the above inequality.
Using the H$\ddot{\hbox{o}}$lder inequality, \eqref{eq3.32} and  the Young inequality we obtain that
\begin{align}\label{eq*3}
\mathbb{E}\Big( |\tilde{Y}^{i,M}_{s}|^{p-2}
|\tilde{Y}^{i,M}_{s}-Y^{i,M}_{t_{k}}|^2\Big)&\leq \Big(\mathbb{E}|\tilde{Y}^{i,M}_{s}|^p\Big)^{\frac{p-2}{p}}
\Big(\mathbb{E}|\tilde{Y}^{i,M}_{s}-Y^{i,M}_{t_{k}}|^p \Big)^{\frac{2}{p} }\nn\\
&\leq C \Delta^{\frac{2}{3}}\Big(\mathbb{E}
|\tilde{Y}^{i,M}_{s}|^p\Big)^{\frac{p-2}{p}}\Big(1+ \mathbb{E}|Y^{i,M}_{t_{k}}|^p\Big)^{\frac{2}{p}} \nn\\
 &\leq C \Delta^{\frac{2}{3}}\Big(1 +\mathbb{E}|\tilde{Y}^{i,M}_{s}|^p+\mathbb{E}|Y^{i,M}_{t_{k}}|^p \Big),
\end{align} where $C$ is a positive constant only depending on $p$.
Using the Young inequality we have
\begin{align}\label{eq*4}
 \mathbb{E}\Big( |\tilde{Y}^{i,M}_{s}|^{p-2}\mathcal{L}^{Y,M}_{t_{k}}(|\cdot|^2) \Big)
 \leq  \frac{ p-2  }{p} \mathbb{E} |\tilde{Y}^{i,M}_{s}|^{p} 
+   \frac{  2  }{p} \mathbb{E}\Big(\mathcal{L}^{Y,M}_{t_{k}}(|\cdot|^2) \Big)^{\frac{p}{2}}.\end{align}
Owing to the convexity of function $y=x^q$ for $x>0$ we know that for any $\delta>0$,
 \begin{align}\label{5.8+}
 (a+b)^{q}\leq (1+\delta)a^{q}+C_{\delta}b^{q},~~\forall a, b>0.
 \end{align} 
 Using  the H\"older inequality, \eqref{eq*Y} and \eqref{5.8+} yields that for any $s\in [t_{k},t)$ and $\delta>0$,
 \begin{align*}
\mathbb{E}\Big(\mathcal{L}^{Y,M}_{t_{k}}(|\cdot|^2) \Big)^{\frac{p}{2}}&\leq \mathbb{E}|Y^{i,M}_{t_{k}}|^{p}
\leq (1+\delta)\mathbb{E}|\tilde{Y}^{i,M}_{s}|^{p}+C_{\delta}\mathbb{E}|\tilde{Y}^{i,M}_{s}-Y^{i,M}_{t_{k}}|^{p}\nn\
\\&\leq (1+\delta)\mathbb{E}|\tilde{Y}^{i,M}_{s}|^{p}+C_{\delta}\Delta^{\frac{p}{3}}\big(1+\mathbb{E}|Y^{i,M}_{t_{k}}|^{p}\big).
\end{align*}
Inserting the above inequality into \eqref{eq*4} gives that
\begin{align*}
\mathbb{E}\Big( |\tilde{Y}^{i,M}_{s}|^{p-2}\mathcal{L}^{Y,M}_{t_{k}}(|\cdot|^2) \Big)
 \leq (1+\delta)\mathbb{E}|\tilde{Y}^{i,M}_{s}|^{p}+C_{\delta}\Delta^{\frac{p}{3}}\big(1+\mathbb{E}|Y^{i,M}_{t_{k}}|^{p}\big).
\end{align*}
Furthermore, using the Young inequality, \eqref{eq3.35} and  \eqref{eq*+25} we derive that
\begin{align}\label{eq*5}
 &\mathbb{E}\Big(|\tilde{Y}^{i,M}_{s}|^{p-2}|
\tilde{Y}^{i,M}_{s}-Y^{i,M}_{t_{k}}| |f(Y^{i,M}_{t_{k}},\mathcal{L}^{Y,M}_{t_{k}})
 |\Big)\nn\\
 \leq & 
 \frac{ \lambda_1-\lambda_2}{8}\mathbb{E} |\tilde{Y}^{i,M}_{s}|^{p} 
+  \Big(\frac{8}{\lambda_1-\lambda_2}\Big)^{\frac{p-2}{2}}\mathbb{E}\Big(|\tilde{Y}^{i,M}_{s}-Y^{i,M}_{t_{k}}|^{\frac{p}{2}} |f(Y^{i,M}_{t_{k}},\mathcal{L}^{Y,M}_{t_{k}}) | ^{\frac{p}{2}} \Big)\nn\\
\leq  &
  \frac{ \lambda_1-\lambda_2}{8}\mathbb{E} |\tilde{Y}^{i,M}_{s}|^{p} 
+  C  \Delta^{\frac{3p\theta}{4}  }\Big(1+\mathbb{E}|Y^{i,M}_{t_{k}}|^p \Big). 
\end{align}
Let $\delta=\frac{\lambda_1-\lambda_2}{4(\lambda_1+\lambda_2)}>0$. Inserting inequalities \eqref{eq*3}-\eqref{eq*5} into \eqref{eq*2}  yields that 
 \begin{align*} 
\mathbb{E}|\tilde{Y}^{i,M}_{t}|^{p}&\leq e^{\varrho  (t_k-t)}\mathbb{E}|Y^{i,M}_{t_{k}}|^{p}
-  \Big(\frac{\varrho}{2}-C\Delta^{\frac{2}{3}}\Big)\int_{t_{k}}^{t}
e^{\varrho  (s-t)}
\mathbb{E}|\tilde{Y}^{i,M}_{s}|^{p}\mathrm{d}s\nn\\
&~~+C \int_{t_{k}}^{t}e^{\varrho  (s-t)}\Big(1+\Delta^{\frac{2}{3}\wedge\frac{3p\theta}{4}  }\mathbb{E} 
| Y^{i,M}_{t_{k}}|^p\Big) \mathrm{d}s+C\Delta.\end{align*}
Choose $\Delta_1^*\in (0,1]$ small sufficiently such that   
\begin{align}\label{eq*6}
 C(\Delta_1^*)^{\frac{2}{3}}\leq \frac{\varrho}{2},~~~~  \frac{\varrho^2}{2} \Delta_1^* + (\Delta_1^*)^{\frac{2}{3}\wedge \frac{3p\theta}{4}}\leq \frac{\varrho}{2 }.
 \end{align}
Thus, for any $\Delta\in (0, \Delta_1^*]$ and any $t\in [t_k,t_{k+1})$, using the Taylor expansion, \eqref{eq*6}  and the fact   $|Y^{i,M}_{t_{k+1}}|\leq |\bar{Y}^{i,M}_{t_{k+1}}|$  we have 
\begin{align}  \label{eq*9}
\mathbb{E}|\tilde{Y}^{i,M}_{t}|^{p} &\leq   \Big( e^{-\varrho  ( t-t_k)}+\Delta^{1+\frac{2}{3}\wedge\frac{3p\theta}{4} }\Big)\mathbb{E}|Y^{i,M}_{t_{k}}|^{p}
 +C \Delta\nn\\
  &\leq   \Big(1- \varrho  \Delta+ \frac{\varrho^2}{ 2}\Delta^2+\Delta^{1+\frac{2}{3}\wedge \frac{3p\theta}{4} } \Big)\mathbb{E}|\bar{Y}^{i,M}_{t_{k}}|^{p}
 +C \Delta\nn\\
 & \leq   \Big(1- \frac{\varrho}{2}\Delta \Big)\mathbb{E}|\bar{Y}^{i,M}_{t_{k}}|^{p}
 +C \Delta.\end{align}
 Taking the limit inferior on both sides of the above inequality, by virtue of the Fatou Lemma, for any $\Delta\in (0, \Delta_1^*]$, we arrive at 
\begin{align*}%\label{eq*7}
\mathbb{E}|\bar{Y}^{i,M}_{t_{k+1}}|^{ {p}}
  \leq \liminf_{t\rightarrow t^{-}_{k+1}}\mathbb{E}|\tilde{Y}^{i,M}_{t}|^{p}&   \leq   \Big(1- \frac{\varrho}{2}\Delta \Big)\mathbb{E}|\bar{Y}^{i,M}_{t_{k}}|^{p}
 +C \Delta.\end{align*}
Solving the above difference inequality and using the fact   $|Y^{i,M}_{t_{k+1}}|\leq |\bar{Y}^{i,M}_{t_{k+1}}|$  yields   
 \begin{align} \label{eq5.17}
\mathbb{E}|Y^{i,M}_{t_{k+1}}|^{p}\leq\mathbb{E}|\bar{Y}^{i,M}_{t_{k+1}}|^{ {p}}  \leq \Big(1- \frac{\varrho}{2}\Delta \Big)^{k+1}\mathbb{E}|X^{i}_{0}|^{p} +C \Delta\sum_{n=0}^{k}\Big(1- \frac{\varrho}{2}\Delta \Big)^n 
\leq C,
\end{align}
where the positive constant $C$ is independent of  $k$, $M$ and $\Delta$. 
Therefore, the desired assertions follow from the arbitrariness of $M$ and \eqref{3.9}.
The proof is complete.
\end{proof}

Owing to Theorem \ref{L7.4}, following the proof line of Lemma \ref{lem3.6}, we can directly derive the uniform error of $\mathbb{E}\big|\tilde{Y}^{i,M}_{t}-Y^{i,M}_{t}\big|^{q}$ with respect to $t\geq 0$.
\begin{cor}\label{cor+1}
Let Assumptions \ref{ass2}, \ref{ass4}, \ref{ass5}  hold and $X_0\in L^{p}_{0}$ with $p\geq \alpha+2$. Then for any  $q\in [2,2p/(\alpha+2)]$,
\begin{align*}
\sup_{M\geq 1} \sup_{1\leq i\leq M}\sup_{t\geq 0}\mathbb{E}\big|\tilde{Y}^{i,M}_{t}-Y^{i,M}_{t}\big|^{q}\leq C \Delta^{\frac{q}{2}},
\end{align*} where the positive constant $C$ is independent of $\Delta$.
\end{cor}

%{\color{blue} Moreover, similar to Corollary \ref{L6.16}, we derive the uniform bound of $\mathbb{E}\big|\bar{Y}^{i,M}_{t}-Y^{i,M}_{t}\big|^{q}$ as follows.
%\begin{cor}\label{cor+*}
%Let Assumptions  \ref{ass1}, \ref{ass2}, \ref{ass5} and $X_0\in { L^{p}_{0}}$ hold with $p>2$, for any $q\in [2,p)$ and $\Delta\in (0,\Delta_1^*]$,  the TEM numerical solution given by  \eqref{eq2.1} satisfies  
%\begin{align*}
%\sup_{M\geq 1}\sup_{1\leq i\leq M}\sup_{k\geq0}\mathbb{E}\big|Y^{i,M}_{t_{k}}-\pi_{\Delta}(Y^{i,M}_{t_{k}})\big|^q\leq C\Delta^{\frac{\kappa (p-q)}{\alpha}},
%\end{align*}
%where the constant $C$ is independent of  constants $\Delta$, $k$ and $M$
%\end{cor}}

In order to construct an appropriate iterative sequence for the non-asymptotic error we cite an iterative lemma \cite[Lemma 3.18]{IMA}. 
{\begin{lemma}[{{\cite[Lemma 3.18]{IMA}}}]\label{L6.4}
Given any integer $m\geq 0$, $\varepsilon>0$ and $0<\delta<1$. If a non-negative sequence $\{\alpha_{n}\}_{n\geq 0}$ satisfies 
$
\alpha_{n}\leq \varepsilon+\delta \alpha_{n-m},~ \forall n\geq m,
$
then 
\begin{align*}
\alpha_{n}\leq \frac{\varepsilon}{1-\delta}+K\delta^{\frac{n}{m}-1},~~~\forall n\geq 0,
\end{align*}
where $K=\sup_{0\leq k\leq m-1}\alpha_{k}$.
\end{lemma}}
{Now we impose the hypothesis on the exponential ergodicity of the solution to MV-SDE \eqref{eq3.1}. 
{\begin{assp}\label{asss5} 
 For any $\mu_0\in \mathcal{P}_{p}(\RR^{d})$, MV-SDE \eqref{eq3.1} has a unique invariant probability measure $\mu^*\in \mathcal{P}_{p}(\RR^{d})$. Moreover,  there exists a constant $\bar{q}\in [2,p]$ and a  pair of positive constants  $\beta$ and $L$ such that 
$$
\mathbb{W}_{\bar{q}}(\mathrm{P}^{*}_{t}\mu_{0},\mu^*)\leq L\mathbb{W}_{\bar{q}}(\mu_0,\mu^*)e^{-\beta t},~ ~\forall t\geq0,
$$
 where $\mu_0$  is the  distribution of the initial data $X_0$.
\end{assp}} 
By virtue of Assumption \ref{asss5} and  Theorem \ref{th5.5}, we use the triangle inequality to construct an iterative formula on the non-asymptotic error of the distribution of the numerical solution and the invariant measure. Then we take use of  Lemma \ref{L6.4} to obtain the  non-asymptotic error bound. }

\begin{theorem}
Let Assumptions \ref{ass2} and \ref{ass4}-\ref{asss5} hold with  $p\geq2(\alpha+2)\vee (4\alpha)$, $p_0>2$ and $\bar{q}\in [2,p_0 )\cap[2,p/((\alpha+2)\vee(2\alpha))]{\color{blue}}$. Then for any $X_0\in L^{p}_{0}~(X_0\sim \mu _0)$ and $\Delta\in (0,\Delta_1^*]$, the TEM numerical solution  with $\kappa\in [\bar{q}\alpha/2(p-\bar{q}),1/3)$  satisfies
\begin{align*}
\mathbb{W}_{\bar{q}}\big(\mathrm{P}^{\Delta,M,*}_{t_{k}}\mu_{0},\mu^*\big)\leq {C}\Big( \Upsilon_{M,\bar{q},p,d} ^{\frac{1}{\bar{q}}}+\Delta^{\frac{1}{2}}+ e^{-\lambda t_k}\Big),~~\forall k\geq 0,
\end{align*}
where  $\Delta_1^*$ is given in Theorem \ref{L7.4},  $ {C}$ is a positive constant independent of $k$, $M$ and $\Delta$, and 
$\lambda:=\beta/(\log L+1+\beta).$
\end{theorem}
\begin{proof} Fix a $\kappa\in [\bar{q}\alpha/2(p-\bar{q}),1/3)$.
By virtue of Theorem \ref{L7.4},  there is a $\Delta_1^*\in (0,1]$ and a $C_1>0$  such that for any  $\Delta\in (0,\Delta_1^*]$,  
\begin{align}\label{epf6.15}
\sup_{k\geq 0}\big(\mathrm{P}^{\Delta,M,*}_{t_k}\mu_0\big)(|\cdot|^{p})\leq \sup_{M\geq1}\sup_{1\leq i\leq M}\sup_{\Delta\in (0,\Delta_1^*]}\sup_{k\geq0}\mathbb{E}|\bar{Y}^{i,M}_{t_{k}}|^{p}\leq C_1,
\end{align}
which implies that $\mathrm{P}^{\Delta,M,*}_{t_{k}}\mu_{0}\in \mathcal{P}_{p}(\RR^{d})$ holds for any $k\geq 0$ and $\mu_{0}\in \mathcal{P}_{p}(\RR^{d})$.
 For any positive integer $m$ (defined latter) and  any $n\geq m$, using the elementary inequality and the semigroup property yields   
\begin{align}\label{eqq6.14}
\mathbb{W}_{\bar{q}}\big(\mathrm{P}^{\Delta,M,*}_{t_{n}}\mu_{0},\mu^*\big)&\leq \mathbb{W}_{\bar{q}}\big(\mathrm{P}^{\Delta,M,*}_{t_{m}}\mathrm{P}^{\Delta,M,*}_{t_{n-m}}\mu_{0},\mathrm{P}^{*}_{t_m}\mathrm{P}^{\Delta,M,*}_{t_{n-m}}\mu_0 \big) +\mathbb{W}_{\bar{q}}\big(\mathrm{P}^{*}_{t_{m}}\mathrm{P}^{\Delta,M,*}_{t_{n-m}}\mu_{0}, \mathrm{P}^{*}_{t_{m}}\mu^*\big).
\end{align}
%Since $q\in [2,2p/(3\alpha+2))$, one has $q\alpha/2(p-q)<1/3$. Then for any $\kappa\in [q\alpha/2(p-q),1/3)$,
Thanks to $\bar{q}\in [2,p_0 )\cap[2,p/((\alpha+2)\vee(2\alpha))]{\color{blue}}$, 
it follows from Theorem \ref{th5.5}  that
\begin{align}\label{eqqc6.15}
\mathbb{W}_{\bar{q}}\big(\mathrm{P}^{\Delta,M,*}_{t_{m}}\mathrm{P}^{\Delta,M,*}_{t_{n-m}}\mu_{0},\mathrm{P}^{*}_{t_{m}}\mathrm{P}^{\Delta,M,*}_{t_{n-m}}\mu_{0}\big)\leq {C}_2\Big( \Upsilon_{M,\bar{q},p,d} ^{\frac{1}{\bar{q}}}+\Delta^{\frac{1}{2}}\Big),
\end{align}
where $ {C}_2=C_{\bar{q}, p, d, T}$ is a positive constant  independent of $\Delta$ and $M$, where $T$ is a upper bound of $t_m$.
Moreover,  by Assumption \ref{asss5} we derive that
\begin{align}\label{eqq6.16}
\mathbb{W}_{\bar{q}}\big(\mathrm{P}^{*}_{t_{m}}\mathrm{P}^{\Delta,M,*}_{t_{n-m}}\mu_{0}, \mathrm{P}^{*}_{t_{m}}\mu^*\big)\leq L \mathbb{W}_{\bar{q}}\big(\mathrm{P}^{\Delta,M,*}_{t_{n-m}}\mu_{0},\mu^*\big)e^{-\beta t_{m}}.
\end{align} 
Inserting \eqref{eqqc6.15} and \eqref{eqq6.16} into \eqref{eqq6.14} leads to that for any $\mu_{0}\in \mathcal{P}_{p}(\RR^{d})$ and integers $n\geq m$,
\begin{align*}%\label{eqf6.17}
\mathbb{W}_{\bar{q}}(\mathrm{P}^{\Delta,M,*}_{t_{n}}\mu_{0},\mu^*)\leq {C}_2\big( \Upsilon_{M,\bar{q},p,d} ^{\frac{1}{\bar{q}}}+\Delta^{\frac{1}{2}}\big)+Le^{-\beta t_{m}}\mathbb{W}_{\bar{q}}(\mathrm{P}^{\Delta,M,*}_{t_{n-m}}\mu_{0},\mu^*).
\end{align*}
 %Let $m=\Big[\frac{\log L+1}{\beta \Delta^*_1}\Big]+1$, where $[a]$ is the  integer part of $a\in \RR$. 
 Thus, for any  $\Delta\in (0,\Delta_1^*]$,  choose   $m=\Big[\frac{\log L+1}{\beta \Delta}\Big]+1$, where $[a]$ is the  integer part of $a\in \RR$, which implies \begin{align*}
\frac{\log L+1}{\beta }\leq t_{m} \leq \Big(\frac{\log L+1}{\beta\Delta}+1\Big)\Delta\leq \frac{\log L+1}{\beta}+1=:T.
\end{align*} Therefore,
\begin{align}\label{eqf6.17}
\mathbb{W}_{\bar{q}}(\mathrm{P}^{\Delta,M,*}_{t_{n}}\mu_{0},\mu^*)\leq {C}_2( \Upsilon_{M,\bar{q},p,d} ^{\frac{1}{\bar{q}}}+\Delta^{\frac{1}{2}})+\frac{1}{e}\mathbb{W}_{\bar{q}}(\mathrm{P}^{\Delta,M,*}_{t_{n-m}}
\mu_{0},\mu^*).
\end{align} %Thanks to  $Le^{-\beta t_{m}}\leq 1/e$, 
Then using Lemma \ref{L6.4} with $\alpha_n=\mathbb{W}_{\bar{q}}(\mathrm{P}^{\Delta,M,*}_{t_{n}}\mu_{0},\mu^*)$ implies that for any $n\geq m$,
\begin{align}\label{6.17}
\mathbb{W}_{\bar{q}}\big(\mathrm{P}^{\Delta,M,*}_{t_{n}}\mu_{0},\mu^*\big) \leq {C} \Big( \Upsilon_{M,\bar{q},p,d} ^{\frac{1}{\bar{q}}}+\Delta^{\frac{1}{2}}\Big)+  {C}_3 e^{-\frac{n}{m}},
\end{align}
where the constant 
$
 {C}_3=e  \sup_{1\leq k\leq m-1}\mathbb{W}_{\bar{q}}(\mathrm{P}^{\Delta,M,*}_{t_{k}}\mu_0,\mu^*).
$
Thanks to $\mu_0, \mu^*\in \mathcal{P}_{p}(\RR^{d})$, applying Theorem \ref{L7.4} and the H\"older inequality yields that
\begin{align*}
 {C}_3\leq e \Big(\sup_{M\geq1}\sup_{1\leq i\leq M}\sup_{\Delta\in (0,\Delta^*_1]}\sup_{k\geq 0}\mathbb{E}|\bar{Y}^{i,M}_{t_{k}}|^{\bar{q}}+\mu^*(|\cdot|^{\bar{q}})\Big)^{\frac{1}{\bar{q}}}<\infty.
\end{align*}
Define 
$\displaystyle\lambda=\frac{\beta }{\log L+1+\beta}.$ One notes that 
$
\frac{n}{m}\geq \frac{\beta n\Delta}{{\log L +1+\beta \Delta}}\geq \lambda n\Delta.
$
This together with \eqref{6.17} implies that the desired assertion holds. %that for any $n\geq 0$,
%$$\mathbb{W}_{q}(\mathrm{P}^{\Delta,M,*}_{t_{n}}\mu_{0},\mu^*)\leq {C}\Big( \Upsilon_{M,q,p,d} ^{\frac{1}{q}}+\Delta^{\frac{1}{2}} + e^{-\lambda t_n}\Big).$$
The proof is complete.
\end{proof}

\subsection{The Exponential Ergodicity of Numerical Solution}\label{sb6.2}
{This subsection explores  the asymptotic properties  of the TEM numerical solutions including the exponential stability and the exponential ergodicity.  
 Now we impose the following assumption.}

\begin{assp}\label{ass6} 
There exist  constants~$p_0\geq 2$, $\bar{\lambda}_{1}>\bar{\lambda}_{2}\geq 0$ such that
$$
2(x_{1}-x_{2})^{T}\big(f(x_{1},\mu_{1})-f(x_{2},\mu_{2})\big)+(p_0-1)|g(x_1,\mu_1)-g(x_{2},\mu_2)|^{2}\leq -\bar{\lambda}_1|x_1-x_{2}|^{2}+\bar{\lambda}_2\mathbb{W}^{2}_{2}(\mu_{1},\mu_{2})
$$
for any~$x_{1},~x_{2}\in\RR^{d}$ and~$\mu_{1},~\mu_{2}\in\mathcal{P}_{2}(\RR^{d})$.
\end{assp}

\begin{rem}\label{rm6.7}
If $f(0,\boldsymbol{\delta}_0)=g(0,\boldsymbol{\delta}_0)=0$, then Assumption \ref{ass6} implies that 
$$
2x^{T}f(x,\mu)+(p_0-1)|g(x,\mu)|^{2}\leq -\bar{\lambda}_1|x|^{2}+\bar{\lambda}_2\mu(|\cdot|^2).
$$
\end{rem}

 Under Assumptions  \ref{ass1}, \ref{ass5} and \ref{ass6} and by a similar argument to \cite[Theorem 3.1]{WFY2018},  MV-SDE \eqref{eq3.1} admits a unique   invariant  probability measure.
\begin{lemma}\label{L7.1}
Let Assumptions \ref{ass1},  \ref{ass5}, \ref{ass6}  hold and $\mu_0\in  \mathcal{P}_{p}(\RR^{d})$. Then for any $q\in [2,p_0\wedge p]$, MV-SDE \eqref{eq3.1} admits a unique invariant probability measure $\mu^*\in \mathcal{P}_{p}(\RR^{d})$ such that 
\begin{align*}
\mathbb{W}_{q}(\mathrm{P}^{*}_{t}\mu_{0},\mu^*)\leq \mathbb{W}_{q}(\mu_0,\mu^*)e^{-\frac{(\bar{\lambda}_1-\bar{\lambda}_2)t}{2}},~~~\forall t\geq0.
\end{align*}
\end{lemma}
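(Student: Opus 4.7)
\proof (Proposal.) The plan is to establish the contraction estimate for $\mathbb{P}_t^*$ first, then derive both existence and uniqueness of the invariant measure as consequences.

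First I would prove contractivity of $\mathbb{P}_t^*$ on $(\mathcal{P}_{2}(\RR^{d}),\mathbb{W}_2)$: for any $\mu_0,\nu_0\in\mathcal{P}_{2}(\RR^{d})$,
\begin{equation*}
\mathbb{W}_{2}^{2}\big(\mathbb{P}_{t}^{*}\mu_{0},\mathbb{P}_{t}^{*}\nu_{0}\big)\leq \mathbb{W}_{2}^{2}(\mu_{0},\nu_{0})\,e^{-(\bar{\lambda}_1-\bar{\lambda}_2)t},~~\forall t\geq 0.
\end{equation*}
To do this, I would fix an optimal $\mathbb{W}_2$-coupling $(X_0,X_0')$ of $\mu_0$ and $\nu_0$, run two copies $X_t,X_t'$ of \eqref{eq3.1} driven by the same Brownian motion $\{B_t\}$, apply the It\^o formula to $|X_t-X_t'|^2$, and invoke Assumption \ref{ass6} to obtain
\begin{equation*}
\mathrm{d}|X_t-X_t'|^2\leq \big(-\bar{\lambda}_1|X_t-X_t'|^2+\bar{\lambda}_2\mathbb{W}_{2}^{2}(\mathcal{L}_t^{X},\mathcal{L}_t^{X'})\big)\mathrm{d}t+\mathrm{d}\mathcal{N}_t,
\end{equation*}
where $\mathcal{N}_t$ is a local martingale (handled by a standard localization argument together with the moment bound from Lemma \ref{r3.3}). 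Since $(X_t,X_t')$ is itself a coupling of $\mathcal{L}_t^X$ and $\mathcal{L}_t^{X'}$, we have $\mathbb{W}_{2}^{2}(\mathcal{L}_t^{X},\mathcal{L}_t^{X'})\leq\mathbb{E}|X_t-X_t'|^2$, and the Gr\"onwall inequality applied to $\varphi(t):=\mathbb{E}|X_t-X_t'|^2$ yields $\varphi(t)\leq \varphi(0)e^{-(\bar{\lambda}_1-\bar{\lambda}_2)t}$. Taking the infimum over couplings of the initial data and using optimality of $(X_0,X_0')$ gives the desired contraction.

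Next I would deduce existence of an invariant measure. Using the semigroup property $\mathbb{P}_{t+s}^*=\mathbb{P}_t^*\mathbb{P}_s^*$ together with the contraction,
\begin{equation*}
\mathbb{W}_{2}^{2}\big(\mathbb{P}_{t+s}^{*}\mu_{0},\mathbb{P}_{t}^{*}\mu_{0}\big)=\mathbb{W}_{2}^{2}\big(\mathbb{P}_{t}^{*}\mathbb{P}_{s}^{*}\mu_{0},\mathbb{P}_{t}^{*}\mu_{0}\big)\leq \mathbb{W}_{2}^{2}\big(\mathbb{P}_{s}^{*}\mu_{0},\mu_{0}\big)\,e^{-(\bar{\lambda}_1-\bar{\lambda}_2)t}.
\end{equation*}
The uniform-in-time moment bound of Lemma \ref{Le5.1} (combined with $\mathbb{W}_{2}^{2}(\mathbb{P}_{s}^{*}\mu_{0},\boldsymbol{\delta}_0)=\mathbb{E}|X_s|^2$ and triangle inequality) shows $\mathbb{W}_{2}^{2}(\mathbb{P}_{s}^{*}\mu_{0},\mu_{0})$ is uniformly bounded in $s$, so the right-hand side tends to $0$ as $t\to\infty$ uniformly in $s\geq 0$. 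Hence $\{\mathbb{P}_{t}^{*}\mu_0\}_{t\geq 0}$ is Cauchy in the complete Polish space $(\mathcal{P}_2(\RR^d),\mathbb{W}_2)$, so it converges to some $\mu\in\mathcal{P}_{2}(\RR^{d})$. Invariance follows by continuity of $\mathbb{P}_s^*$ in $\mathbb{W}_2$: $\mathbb{P}_{s}^{*}\mu=\lim_{t\to\infty}\mathbb{P}_{s}^{*}\mathbb{P}_{t}^{*}\mu_{0}=\lim_{t\to\infty}\mathbb{P}_{t+s}^{*}\mu_{0}=\mu$. Uniqueness is immediate: any two invariant measures $\mu,\mu'$ satisfy $\mathbb{W}_{2}^{2}(\mu,\mu')=\mathbb{W}_{2}^{2}(\mathbb{P}_t^*\mu,\mathbb{P}_t^*\mu')\leq \mathbb{W}_{2}^{2}(\mu,\mu')e^{-(\bar{\lambda}_1-\bar{\lambda}_2)t}\to 0$. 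The stated decay estimate is then the contraction with $\nu_0=\mu$.

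The main technical obstacle I anticipate is the rigorous justification of taking expectation in the It\^o expansion of $|X_t-X_t'|^2$: because the diffusion coefficient is only locally Lipschitz and of super-linear growth, the stochastic integral is a priori only a local martingale, so one must introduce a sequence of stopping times $\sigma_R=\inf\{t\geq 0:|X_t|\vee|X_t'|\geq R\}$, obtain the inequality on $[0,t\wedge\sigma_R]$, and pass $R\to\infty$ using the $L^p$-moment bounds of Lemma \ref{r3.3} together with uniform integrability. A secondary delicate point is that after applying Assumption \ref{ass6} one needs the coupling to remain synchronous in time (same Brownian motion for both components), which requires Yamada--Watanabe type pathwise uniqueness on the coupled system; this is standard for MV-SDEs under Assumptions \ref{ass3}--\ref{ass4} but should be stated explicitly.
\endproof
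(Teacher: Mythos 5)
Your proposal is correct and follows essentially the same route the paper intends: the paper gives no proof of this lemma, deferring to the synchronous-coupling $\mathbb{W}_2$-contraction argument of \cite[Theorem 3.1]{WFY2018}, which is exactly what you reconstruct (optimal coupling of the initial laws, It\^o formula on $|X_t-X_t'|^2$ under Assumption \ref{ass6}, Gronwall, then a Cauchy-sequence argument in $(\mathcal{P}_2(\RR^d),\mathbb{W}_2)$ using the uniform moment bound of Lemma \ref{Le5.1}). The localization issues you flag are genuine but standard, and your remark about pathwise uniqueness is already covered by Lemma \ref{r3.3}.
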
  

If $f(0,\boldsymbol{\delta}_0)=g(0,\boldsymbol{\delta}_0)=0$,  MV-SDE \eqref{eq3.1} and its corresponding  IPS \eqref{eq2} have the trivial solutions $X_t \equiv 0$ a.s.  and $X_t^{i,M} \equiv 0$ a.s.  $(i=1,\cdots, M)$, respectively. Next  {\color{red}we}  give an exponential stability criterion on  their trivial solutions. Since the proof is routine we omit it to avoid redundancy.
 \begin{cor}\label{th6.2}
Let Assumptions \ref{ass1},   \ref{ass6}  and  $X_{0}\in L^{p_0}_{0}$ hold.  If $f(0,\boldsymbol{\delta}_0)=g(0,\boldsymbol{\delta}_0)=0$, then for any $q\in [2,p_0]$, the solution to MV-SDE \eqref{eq3.1} satisfies
\begin{align*}
\mathbb{E}|X_{t}|^{q}\leq \mathbb{ E}|X_0|^{q}e^{-\frac{q(\bar{\lambda}_1-\bar{\lambda}_2)t}{2}},~~~~\sup_{1\leq i\leq M}\mathbb{E}|X^{i,M}_{t}|^q\leq \mathbb{ E}|X_0|^{q}e^{-\frac{q(\bar{\lambda}_1-\bar{\lambda}_2)t}{2}},~~~~\forall t\geq0.
\end{align*}
\end{cor}  
 \begin{lemma}\label{L7.3}
Let Assumptions \ref{ass1}, \ref{ass3} and \ref{ass6}  hold.  For any  $X_0,~Z_0\in L^{{p}}_{0}$,  let  $Y^{i,M}_{t_{k}}$ and $Z^{i,M}_{t_{k}}$ be the numerical solutions  with  initial values $X^{i}_0$ and $Z^{i}_0$,  respectively, where  $\mathcal{L}_{X^{i}_0}=\mathcal{L}_{X_0} $ and $\mathcal{L}_{Z^{i}_0}=\mathcal{L}_{Z_0} $,  $ i=1,\cdots, M$. Then for any $q\in [2,p_0\wedge p]$, $\varepsilon\in (0,q(\bar{\lambda}_1-\bar{\lambda}_2)/2)$, there exists a  $\Delta_2^*\in(0,1]$ such that for any $\Delta\in (0,\Delta_2^*]$,
\begin{align*}
\sup_{1\leq i\leq M}\mathbb{E}\big|Y^{i,M}_{t_{k}}-Z^{i,M}_{t_{k}}\big|^q\leq \sup_{1\leq i\leq M}\mathbb{E}\big|\bar{Y}^{i,M}_{t_{k}}-\bar{Z}^{i,M}_{t_{k}}\big|^q\leq \mathbb{E}|X_0-Z_0|^qe^{-\big(\frac{q(\bar{\lambda}_1-\bar{\lambda}_2)}{2}-\varepsilon\big)t_{k}},~~\forall k\geq 1.
\end{align*}
 \end{lemma} 
\begin{proof} Since the proof has some similar arguments as Theorem \ref{L7.4} we only give the outline of proof.
For brevity, we define $\Theta^{i,M}_{t_k}=Y^{i,M}_{t_{k}}-Z^{i,M}_{t_{k}}$, $\tilde{\Theta}^{i,M}_{t}=\tilde{Y}^{i,M}_{t}-\tilde{Z}^{i,M}_{t}$,  $\bar{\Theta}^{i,M}_{t}=\bar{Y}^{i,M}_{t}-\bar{Z}^{i,M}_{t}$. Moreover, define
 \begin{align*}F\big(Y^{i,M}_{t_{k}},Z^{i,M}_{t_{k}}\big)&=f\big(Y^{i,M}_{t_k},\mathcal{L}^{Y,M}_{t_k}\big)-f\big(Z^{i,M}_{t_k},\mathcal{L}^{Z,M}_{t_k}\big),
 \nn\\
  G\big(Y^{i,M}_{t_{k}},Z^{i,M}_{t_{k}}\big)&=g\big(Y^{i,M}_{t_k},\mathcal{L}^{Y,M}_{t_k}\big)-g\big(Z^{i,M}_{t_k},\mathcal{L}^{Z,M}_{t_k}\big).\end{align*}
By Assumption \ref{ass1} and \ref{ass3}, Remarks \ref{r1} and \ref{rm3.1}, it follows that
\begin{equation}\label{5.29*}
\begin{aligned}
&\big|F\big(Y^{i,M}_{t_{k}},Z^{i,M}_{t_{k}}\big)\big|\leq H\Delta^{-\kappa}\big|\Theta^{i,M}_{t_{k}}\big|+K\mathbb{W}_{2}\big(\mathcal{L}^{Y,M}_{t_{k}},\mathcal{L}^{Z,M}_{t_{k}}\big),
\\&\big|G\big(Y^{i,M}_{t_{k}},Z^{i,M}_{t_{k}}\big)\big|^{2}\leq 2H\Delta^{-\kappa}\big|\Theta^{i,M}_{t_{k}}\big|^2+2L_3\mathbb{W}^{2}_{2}\big(\mathcal{L}^{Y,M}_{t_{k}},\mathcal{L}^{Z,M}_{t_{k}}\big).
\end{aligned}
\end{equation} 
For any $\varepsilon\in (0,q(\bar{\lambda}_1-\bar{\lambda}_2)/2)$,
define $\varrho=\frac{q(\bar{\lambda}_1-\bar{\lambda}_2)}{2}-\frac{3\varepsilon}{4} $.
For any $\Delta\in (0,1]$, using the It\^o formula, we derive from \eqref{eq4.35} and Assumption \ref{ass6}  that for any $t\in[t_{k},t_{k+1})$,
\begin{align*}%\label{eqqc6.6*}
 e^{\varrho t}\mathbb{E}\big|\tilde{\Theta}^{i,M}_{t}\big|^{q}
&\leq e^{\varrho t_{k}}\mathbb{E}\big|\Theta^{i,M}_{t_{k}}\big|^{q}
+\varrho\mathbb{E}\int_{t_{k}}^{t}
e^{\varrho s}\big|\tilde{\Theta}^{i,M}_{s}\big|^{q}\mathrm{d}s\nn\\
&~~~+\frac{q}{2}\mathbb{E}\int_{t_{k}}^{t}e^{\varrho s}\big|\tilde{\Theta}^{i,M}_{s}\big|^{q-2}
\Big[2(\tilde{\Theta}^{i,M}_{s})^{T}F\big(Y^{i,M}_{t_{k}},Z^{i,M}_{t_{k}}\big)+(q-1)\big|G\big(Y^{i,M}_{t_{k}},Z^{i,M}_{t_{k}}\big)
\big|^{2}\Big]\mathrm{d}s.
\end{align*}
 By the similar arguments as \eqref{eq*2} as in Theorem \ref{L7.4}, for any $\delta>0$ (defined latter),  we obtain 
\begin{align}\label{eq*+2}
\mathbb{E}\big|\tilde{\Theta}^{i,M}_{t}\big|^{q}&\leq e^{\varrho (t_k-t)}\mathbb{E}\big|\Theta^{i,M}_{t_{k}}\big|^{q}
-\Big(\frac{q }{2} (1- \delta)\bar{\lambda}_1-\varrho\Big)\int_{t_{k}}^{t}e^{\varrho (s-t)}
\mathbb{E}\big|\tilde{\Theta}^{i,M}_{s}\big|^{q}\mathrm{d}s\nn\\
&~~+\frac{q  \bar{\lambda}_1}{2\delta} \int_{t_{k}}^{t}e^{\varrho (s-t)}\mathbb{E}\Big( \big|\tilde{\Theta}^{i,M}_{s}\big|^{q-2}
\big|\tilde{\Theta}^{i,M}_{s}-\Theta^{i,M}_{t_{k}}\big|^2\Big) \mathrm{d}s\nn\\
&~~ +\frac{q  \bar{\lambda}_2}{2 }\int_{t_{k}}^{t} e^{\varrho (s-t)} \mathbb{E}\Big( \big|\tilde{\Theta}^{i,M}_{s}\big|^{q-2}\mathbb{W}^{2}_{2}\big(\mathcal{L}^{Y,M}_{t_{k}},\mathcal{L}^{Z,M}_{t_{k}}\big)\Big)\mathrm{d}s \nn\\
&~~+q \int_{t_{k}}^{t} e^{\varrho (s-t)}\mathbb{E}\Big(\big|\tilde{\Theta}^{i,M}_{s}\big|^{q-2}\big|
\tilde{\Theta}^{i,M}_{s}-\Theta^{i,M}_{t_{k}}\big| \big|F\big(Y^{i,M}_{t_{k}},Y^{i,M}_{t_{k}}\big)
 \big|\Big)\mathrm{d}s.
 \end{align}
Now we estimate the terms on the right side of the above inequality. %By \eqref{eq4.35}, it follows that
%\begin{align*}
%\big|\tilde{\Theta}^{i,M}_{s}-\Theta^{i,M}_{t_{k}}\big|=\Big|\int_{t_{k}}^{t}F\big(Y^{i,M}_{t_{k}},Z^{i,M}_{t_{k}}\big)\mathrm{d}t+\int_{t_{k}}^{t}G\big(Y^{i,M}_{t_{k}},Z^{i,M}_{t_{k}}\big)\mathrm{d}B^{i}_{t}\Big|.
%\end{align*}
Using the elementary inequality, \eqref{5.29*} and the inequality
$
\mathbb{E}\mathbb{W}^{q}_{2}\big(\mathcal{L}^{Y,M}_{t_{k}},\mathcal{L}^{Z,M}_{t_{k}}\big)\leq \frac{1}{M}\sum_{i=1}^{M}\mathbb{E}|\Theta^{i,M}_{t_{k}}|^{q}=\mathbb{E}|\Theta^{i,M}_{t_{k}}|^{q}, ~~ i=1,\cdots, M,
$
we  derive that  
\begin{align}\label{5.32*}
\mathbb{E}\big|\tilde{\Theta}^{i,M}_{s}-\Theta^{i,M}_{t_{k}}\big|^q&\leq 2^{q}\mathbb{E}\big|F(Y^{i,M}_{t_{k}},Z^{i,M}_{t_{k}})\big|^{q}\Delta^{q}+2^{q}\mathbb{E}\big|G(Y^{i,M}_{t_{k}},Z^{i,M}_{t_{k}})\big|^{q}\Delta^{\frac{q}{2}}\nn\
\\&\leq  C\Big(\mathbb{E}\big|\Theta^{i,M}_{t_{k}}\big|^{q}\Delta^{\frac{q(1-\kappa)}{2}}+\mathbb{E}
\mathbb{W}^{q}_{2}\big(\mathcal{L}^{Y,M}_{t_{k}},\mathcal{L}^{Z,M}_{t_{k}}\big)\Delta^{\frac{q}{2}}\Big)
 \leq C\mathbb{E}|\Theta^{i,M}_{t_{k}}|^{q}\Delta^{\frac{q(1-\kappa)}{2}}.
\end{align}
Therefore, by the H$\ddot{\hbox{o}}$lder inequality, \eqref{5.32*} and  the Young inequality we obtain that
\begin{align}\label{eq*+3}
\mathbb{E}\Big( \big|\tilde{\Theta}^{i,M}_{s}\big|^{q-2}
\big|\tilde{\Theta}^{i,M}_{s}-\Theta^{i,M}_{t_{k}}\big|^2\Big)&\leq \Big(\mathbb{E}\big|\tilde{\Theta}^{i,M}_{s}\big|^q\Big)^{\frac{q-2}{q}}
\Big(\mathbb{E}\big|\tilde{\Theta}^{i,M}_{s}-\Theta^{i,M}_{t_{k}}\big|^q \Big)^{\frac{2}{q} }\nn\\
&\leq C \Delta^{\frac{2}{3}}\Big(\mathbb{E}
\big|\tilde{\Theta}^{i,M}_{s}\big|^q\Big)^{\frac{q-2}{q}}\Big( \mathbb{E}\big|\Theta^{i,M}_{t_{k}}\big|^q\Big)^{\frac{2}{q}}\nn\
\\& \leq C \Delta^{\frac{2}{3}}\Big(\mathbb{E}\big|\tilde{\Theta}^{i,M}_{s}\big|^q+\mathbb{E}\big|\Theta^{i,M}_{t_{k}}\big|^q \Big),
\end{align} where $C$ is a positive constant only depending on $q$.
Using the Young inequality  we have
\begin{align}\label{eq*+4}
 \mathbb{E}\Big( \big|\tilde{\Theta}^{i,M}_{s}\big|^{q-2}\mathbb{W}^{2}_{2}\big(\mathcal{L}^{Y,M}_{t_{k}},\mathcal{L}^{Z,M}_{t_{k}}\big)\Big)
 \leq  \frac{ q-2  }{q} \mathbb{E} \big|\tilde{\Theta}^{i,M}_{s}\big|^{q} 
+  \frac{  2  }{q}\E \mathbb{W}^{q}_{2}\big(\mathcal{L}^{Y,M}_{t_{k}},\mathcal{L}^{Z,M}_{t_{k}}\big).
\end{align}
Using \eqref{eq*4} and \eqref{5.32*} implies that  
\begin{align*}
\E \mathbb{W}^{q}_{2}\big(\mathcal{L}^{Y,M}_{t_{k}},\mathcal{L}^{Z,M}_{t_{k}}\big)&\leq \mathbb{E}\big|Y^{i,M}_{t_{k}}-Z^{i,M}_{t_{k}}\big|^{q}
\leq (1+\delta)\mathbb{E}\big|\tilde{\Theta}^{i,M}_{s}\big|^{q}+C_{\delta}\mathbb{E}\big|\tilde{\Theta}^{i,M}_{s}-\Theta^{i,M}_{t_{k}}\big|^{q}\nn\
\\&\leq (1+\delta)\mathbb{E}\big|\tilde{\Theta}^{i,M}_{s}\big|^{q}+C_{\delta}\mathbb{E}\big|\Theta^{i,M}_{t_{k}}\big|^{q}\Delta^{\frac{q(1-\kappa)}{2}}.
\end{align*}
Inserting the above inequality into \eqref{eq*+4} yields that
\begin{align}\label{5.35*}
 \mathbb{E}\Big( \big|\tilde{\Theta}^{i,M}_{s}\big|^{q-2}\mathbb{W}^{2}_{2}\big(\mathcal{L}^{Y,M}_{t_{k}},\mathcal{L}^{Z,M}_{t_{k}}\big)\Big)&\leq \Big(\frac{q-2}{q}+\frac{2}{q}(1+\delta)\Big)\mathbb{E}\big|\tilde{\Theta}^{i,M}_{s}\big|^{q}+\frac{2}{q}C_{\delta}\mathbb{E}\big|\Theta^{i,M}_{t_{k}}\big|^{q}\Delta^{\frac{q(1-\kappa)}{2}}\nn\
 \\&\leq (1+\delta)\mathbb{E}\big|\tilde{\Theta}^{i,M}_{s}\big|^{q}+C_{\delta}\mathbb{E}\big|\Theta^{i,M}_{t_{k}}\big|^{q}\Delta^{\frac{q(1-\kappa)}{2}}.
\end{align}
 Using the Young inequality,  \eqref{5.29*}, \eqref{5.32*} and  the fact $\kappa=1/3-\theta \in (0,1/3)$ implies that for any $\varepsilon>0$,
\begin{align}\label{eq*+5}
  \mathbb{E}\Big(\big|\tilde{\Theta}^{i,M}_{s}\big|^{q-2}&\big|
\tilde{\Theta}^{i,M}_{s}-\Theta^{i,M}_{t_{k}}\big| \big|F(Y^{i,M}_{t_{k}},\mathcal{L}^{Y,M}_{t_{k}})
 \big|\Big)\nn\\
 \leq & 
 \frac{\e  }{ 4q}\mathbb{E} \big|\tilde{\Theta}^{i,M}_{s}\big|^{q} 
+ \Big(\frac{ 4q}{\e }\Big)^{\frac{q-2}{2}}\mathbb{E}\Big(\big|\tilde{\Theta}^{i,M}_{s}-\Theta^{i,M}_{t_{k}}\big|^{\frac{q}{2}} \big|F\big(Y^{i,M}_{t_{k}},Z^{i,M}_{t_{k}}\big) \big| ^{\frac{q}{2}} \Big) \nn\\
\leq &  \frac{\e  }{4 q } \mathbb{E} \big|\tilde{\Theta}^{i,M}_{s}\big|^{q} 
+  C_{\e}\Big(\mathbb{E}\big|\tilde{\Theta}^{i,M}_{s}-\Theta^{i,M}_{t_{k}}\big|^{q} \Big)^{\frac{1}{2}}\Big(\mathbb{E}\big|F(Y^{i,M}_{t_{k}},Z^{i,M}_{t_{k}})\big|^{q}\Big)^{\frac{1}{2}}
 \nn\\
\leq & 
  \frac{\e  }{ 4q } \mathbb{E} \big|\tilde{\Theta}^{i,M}_{s}\big|^{q} 
+   C_{\e} \Delta^{\frac{3q\theta}{4}} \mathbb{E}\big|\Theta^{i,M}_{t_{k}}\big|^q. 
\end{align}
Inserting inequalities \eqref{eq*+3}, \eqref{5.35*} and \eqref{eq*+5} into \eqref{eq*+2}, letting $\delta=\frac{\e }{2q(\bar{\lambda}_1+\bar{\lambda}_2)}>0$ and recalling the definition of $\var$ yields that 
 \begin{align*} 
\mathbb{E}\big|\tilde{\Theta}^{i,M}_{t}\big|^{q}&\leq e^{\varrho(t_k-t)}\mathbb{E}\big|\Theta^{i,M}_{t_{k}}\big|^{q}
- \Big(\frac{\e}{4}-C \Delta^{\frac{2}{3}}\Big)\int_{t_{k}}^{t}e^{\varrho (s-t)}
\mathbb{E}|\tilde{\Theta}^{i,M}_{s}|^{q}\mathrm{d}s + C \Delta^{1+ \frac{2}{3}\wedge\frac{3q\theta}{4} }  \mathbb{E} 
|\Theta^{i,M}_{t_{k}}|^q.
\end{align*}
Choose $\Delta_2^*\in (0,1]$ small sufficiently such that   
\begin{align}\label{e5.15}
  C (\Delta_2^*)^{\frac{2}{3}} \leq \frac{\e }{4},~~~ ~ C (\Delta_2^*)^{ \frac{2}{3}\wedge\frac{3q\theta}{4} } \leq \frac{\e }{8},~~~~   \var^2  \Delta_2^*   \leq \frac{\e }{4}.
 \end{align} 
 Therefore, for any $\Delta\in (0,\Delta_2^*]$, we obtain that for any $t\in [t_k,t_{k+1})$,
 \begin{align*} 
\mathbb{E}\big|\tilde{\Theta}^{i,M}_{t}\big|^{q}&\leq \Big(e^{-\varrho(t-t_k)}+\frac{\e }{8}\Delta\Big)\mathbb{E}\big|\Theta^{i,M}_{t_{k}}\big|^{q}.
\end{align*} 
 Taking the limit inferior on both sides of the above inequality, using the Fatou Lemma and the Taylor expansion, by \eqref{e5.15},  we arrive at
\begin{align*}
\mathbb{E}\big|\bar{\Theta}^{i,M}_{t_{k+1}}\big|^{ {q}}&\leq \liminf_{t\rightarrow t^{-}_{k+1}}\mathbb{E}|\tilde{\Theta}^{i,M}_{t}|^{q} \leq  \Big( e^{-\var  \Delta}+ \frac{\e }{8}\Delta\Big)\mathbb{E}|\Theta^{i,M}_{t_{k}}|^{q}\nn\\
&
  \leq   \Big(1-  \var \Delta +\frac{\var^2}{2}\Delta^2+ \frac{\e }{8}\Delta\Big)\mathbb{E}|\Theta^{i,M}_{t_{k}}|^{q}\nn\
  \\&\leq   \Big(1- \big(\var-\frac{\e }{4} \big)\Delta  \Big)\mathbb{E}|\Theta^{i,M}_{t_{k}}|^{q}\leq  \Big(1- \big(\var-\frac{\e }{4} \big)\Delta  \Big)\mathbb{E}|\bar{\Theta}^{i,M}_{t_{k}}|^{q},
    \end{align*}
 where the last inequality used the inequality $|\pi_{\Delta}(x)-\pi_{\Delta}(y)|\leq |x-y|, ~\forall x,y \in \RR^{d}$. Solving the above difference inequality and recalling the definition of $\var$, we yield 
\begin{align*} 
\mathbb{E}\big|\Theta^{i,M}_{t_{k+1}}\big|^{ {q}} \leq \mathbb{E}\big| \bar{\Theta}^{i,M}_{t_{k+1}}\big|^{ {q}} \leq \mathbb{E}|X^{i}_{0}-Z^{i}_{0}|^{q}e^{-\big(\var-\frac{\e }{4} \big)t_{k+1}}=
 \mathbb{E}|X_{0}-Z_{0}|^{q}e^{-\big(\frac{q(\bar{\lambda}_1-\bar{\lambda}_2)}{2}-\varepsilon\big)t_{k+1}}.
\end{align*}
Therefore, the desired assertion follows.
 The proof is complete.
\end{proof}
 
 If $f(0,\boldsymbol{\delta}_0)=g(0,\boldsymbol{\delta}_0)=0$, we know from   Lemma \ref{L7.3}   that  the TEM numerical solution is stable exponentially.   
 \begin{cor}\label{th6.3}
Let Assumptions \ref{ass1},  \ref{ass6} hold and $X_0\in L^{p_0}_{0}$. If $f(0,\boldsymbol{\delta}_0)=g(0,\boldsymbol{\delta}_0)=0$, then for any $q\in [2, p_0]$ and $\varepsilon\in (0,q(\bar{\lambda}_1-\bar{\lambda}_2)/2)$, the TEM numerical solutions  satisfy
\begin{align*}
\mathbb{E}\big|Y^{i,M}_{t_{k}}\big|^q\leq \mathbb{E}\big|\bar{Y}^{i,M}_{t_{k}}\big|^q&\leq \mathbb{E}\big|X_0\big|^qe^{-\big(\frac{q(\bar{\lambda}_{1}-\bar{\lambda}_2)}{2}-\varepsilon\big)t_{k}},~~~ i=1,\cdots, M.
\end{align*}
 \end{cor}

With the help of Theorem \ref{L7.4} and Lemma \ref{L7.3}, we establish the existence and uniqueness of the numerical invariant measure by constructing a Cauchy  measure sequence  of the numerical solution   on the complete probability space  $\mathcal{P}_{q}(\RR^{d})$. 

\begin{theorem}\label{th7.2}
Let  Assumptions \ref{ass1}, \ref{ass5} and \ref{ass6} hold and $\mu_0\in \mathcal{P}_{p}(\RR^{d})$. Then  for any  $q\in [2,p_0\wedge p]$, $\varepsilon\in \big(0,\frac{q(\bar{\lambda}_1-\bar{\lambda}_2)}{2}\big)$ and $\Delta\in (0,\Delta_1^*\wedge \Delta^{*}_{2}]$, the TEM numerical solution admits a unique invariant probability measure $\mu^{\Delta,M,*}\in \mathcal{P}_{p}(\RR^{d})$ satisfying that
\begin{align*}
\mathbb{W}^{q}_{q}\left(\mathrm{P}^{\Delta,M,*}_{t_{k}}\mu_0,\mu^{\Delta,M,*}\right)\leq 2^{q-1}\big(\mu_0(|\cdot|^q)+\mu^{\Delta,M,*}(|\cdot|^q)\big)e^{-\big(\frac{q(\bar{\lambda}_1-\bar{\lambda}_2)}{2}-\varepsilon\big)t_{k}},
 \end{align*}
 where $\Delta_1^*$ and $\Delta_2^*$ are given in Theorem \ref{L7.4} and Lemma \ref{L7.3}, respectively.
 \end{theorem}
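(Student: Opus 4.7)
The plan is to (i) obtain uniqueness from the $\mathbb{W}_{2}$-contraction in Lemma \ref{L6.5}, (ii) construct an invariant probability measure as the limit of a Cauchy sequence in $(\mathcal{P}_{2}(\RR^{d}),\mathbb{W}_{2})$ by combining Lemma \ref{L6.5} with the uniform-in-time moment bound of Theorem \ref{L7.4}, (iii) verify the limit is invariant using continuity of the Markov operator, and (iv) read off the stated exponential decay from the contraction, paying the small discretization price that gives rise to the constant $C$.

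For uniqueness, if $\mu^{\Delta,M}_{1},\mu^{\Delta,M}_{2}\in\mathcal{P}_{2}(\RR^{d})$ are two invariant probability measures, then invariance and Lemma \ref{L6.5} yield, for every $k\in\mathbb{N}_{+}$,
\begin{align*}
\mathbb{W}^{2}_{2}(\mu^{\Delta,M}_{1},\mu^{\Delta,M}_{2})
=\mathbb{W}^{2}_{2}(\mathbb{P}^{\Delta,M,*}_{t_{k}}\mu^{\Delta,M}_{1},\mathbb{P}^{\Delta,M,*}_{t_{k}}\mu^{\Delta,M}_{2})
\leq \mathbb{W}^{2}_{2}(\mu^{\Delta,M}_{1},\mu^{\Delta,M}_{2})\, e^{-(\bar{\lambda}_{1}-\bar{\lambda}_{2}-\varepsilon)t_{k}},
\end{align*}
and letting $k\to\infty$ forces $\mu^{\Delta,M}_{1}=\mu^{\Delta,M}_{2}$. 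For existence, consider the sequence $\{\mathbb{P}^{\Delta,M,*}_{t_{k}}\mu_{0}\}_{k\geq0}$. Using the semigroup property together with Lemma \ref{L6.5}, for any $k,l\in\mathbb{N}_{+}$,
\begin{align*}
\mathbb{W}^{2}_{2}\bigl(\mathbb{P}^{\Delta,M,*}_{t_{k+l}}\mu_{0},\mathbb{P}^{\Delta,M,*}_{t_{k}}\mu_{0}\bigr)
\leq \mathbb{W}^{2}_{2}\bigl(\mathbb{P}^{\Delta,M,*}_{t_{l}}\mu_{0},\mu_{0}\bigr)\, e^{-(\bar{\lambda}_{1}-\bar{\lambda}_{2}-\varepsilon)t_{k}}.
\end{align*}
By Theorem \ref{L7.4}, $\sup_{l\geq0}(\mathbb{P}^{\Delta,M,*}_{t_{l}}\mu_{0})(|\cdot|^{2})=\sup_{l\geq0}\mathbb{E}|Y^{i,M}_{t_{l}}|^{2}<\infty$, so $\sup_{l\geq0}\mathbb{W}^{2}_{2}(\mathbb{P}^{\Delta,M,*}_{t_{l}}\mu_{0},\mu_{0})\leq C$ uniformly in $l$. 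Hence $\{\mathbb{P}^{\Delta,M,*}_{t_{k}}\mu_{0}\}$ is Cauchy, and by the completeness of $(\mathcal{P}_{2}(\RR^{d}),\mathbb{W}_{2})$ there exists $\mu^{\Delta,M}\in\mathcal{P}_{2}(\RR^{d})$ with $\mathbb{P}^{\Delta,M,*}_{t_{k}}\mu_{0}\to\mu^{\Delta,M}$.

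To verify invariance, apply Lemma \ref{L6.5} with initial measures $\mathbb{P}^{\Delta,M,*}_{t_{k}}\mu_{0}$ and $\mu^{\Delta,M}$ to get $\mathbb{W}^{2}_{2}(\mathbb{P}^{\Delta,M,*}_{t_{1}}\mathbb{P}^{\Delta,M,*}_{t_{k}}\mu_{0},\mathbb{P}^{\Delta,M,*}_{t_{1}}\mu^{\Delta,M})\leq \mathbb{W}^{2}_{2}(\mathbb{P}^{\Delta,M,*}_{t_{k}}\mu_{0},\mu^{\Delta,M})e^{-(\bar{\lambda}_{1}-\bar{\lambda}_{2}-\varepsilon)t_{1}}\to 0$ as $k\to\infty$; but $\mathbb{P}^{\Delta,M,*}_{t_{1}}\mathbb{P}^{\Delta,M,*}_{t_{k}}\mu_{0}=\mathbb{P}^{\Delta,M,*}_{t_{k+1}}\mu_{0}\to\mu^{\Delta,M}$, and uniqueness of limits in $\mathbb{W}_{2}$ forces $\mathbb{P}^{\Delta,M,*}_{t_{1}}\mu^{\Delta,M}=\mu^{\Delta,M}$, so $\mu^{\Delta,M}$ is invariant.

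For the rate, for any $t\geq 0$ pick $k$ with $t\in[t_{k},t_{k+1})$. Since $\mathbb{P}^{\Delta,M,*}_{t}=\mathbb{P}^{\Delta,M,*}_{t_{k}}$ by definition and $\mu^{\Delta,M}$ is invariant, Lemma \ref{L6.5} yields
\begin{align*}
\mathbb{W}^{2}_{2}\bigl(\mathbb{P}^{\Delta,M,*}_{t}\mu_{0},\mu^{\Delta,M}\bigr)
=\mathbb{W}^{2}_{2}\bigl(\mathbb{P}^{\Delta,M,*}_{t_{k}}\mu_{0},\mathbb{P}^{\Delta,M,*}_{t_{k}}\mu^{\Delta,M}\bigr)
\leq \mathbb{W}^{2}_{2}(\mu_{0},\mu^{\Delta,M})\,e^{-(\bar{\lambda}_{1}-\bar{\lambda}_{2}-\varepsilon)t_{k}},
\end{align*}
and because $0\leq t-t_{k}<\Delta\leq 1$, this is bounded by $e^{\bar{\lambda}_{1}-\bar{\lambda}_{2}-\varepsilon}\,\mathbb{W}^{2}_{2}(\mu_{0},\mu^{\Delta,M})\,e^{-(\bar{\lambda}_{1}-\bar{\lambda}_{2}-\varepsilon)t}$, giving the claim with $C=e^{\bar{\lambda}_{1}-\bar{\lambda}_{2}-\varepsilon}$. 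I expect the main delicate point to be the verification that the Cauchy limit $\mu^{\Delta,M}$ is genuinely fixed by the (only discretely defined) semigroup: the correct bookkeeping is to test against $\mathbb{P}^{\Delta,M,*}_{t_{1}}$ and exploit the strong $\mathbb{W}_{2}$-contraction of Lemma \ref{L6.5} as above; everything else reduces to a triangle inequality and the piecewise-constant convention $\mathbb{P}^{\Delta,M,*}_{t}=\mathbb{P}^{\Delta,M,*}_{t_{k}}$ on $[t_{k},t_{k+1})$.
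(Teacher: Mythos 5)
Your proposal is correct and follows essentially the same route as the paper: the contraction of Lemma \ref{L6.5} plus the uniform moment bound of Theorem \ref{L7.4} yield a Cauchy sequence in $(\mathcal{P}_{2}(\RR^{d}),\mathbb{W}_{2})$, completeness gives the limit, invariance is checked by passing the one-step operator through the limit, and the rate follows from the piecewise-constant convention with the same $e^{\bar{\lambda}_{1}-\bar{\lambda}_{2}}$-type constant. The only (cosmetic) differences are that you run the Cauchy argument from a general $\mu_{0}$ instead of $\boldsymbol{\delta}_{0}$ and prove uniqueness by comparing two fixed points rather than by showing convergence from arbitrary initial data; both versions share the same minor technical caveat of applying Lemma \ref{L6.5} with $\mu^{\Delta,M}\in\mathcal{P}_{2}(\RR^{d})$ as an initial law even though that lemma is stated for $\mathcal{P}_{p}(\RR^{d})$ initial data.
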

\begin{proof}  To highlight the initial distribution $\mu\in   \mathcal{P}_{p}(\RR^{d})$, let $\{ \bar{Y}^{i,M, \mu}_{t_{k}}\} $  denote the TEM numerical solution  generated by  \eqref{eq2.1}. Fix $q\in [2, p_0\wedge p]$.
By virtue of Theorem \ref{L7.4}, it can be concluded that for any  $\Delta\in (0, \Delta_1^*]$, $\mathrm{P}^{\Delta,M,*}_{t_{k}}\boldsymbol{\delta}_0\in \mathcal{P}_{p}(\RR^{d}), \forall k\geq 0.$
 By Lemma  \ref{L7.3} and Theorem \ref{L7.4},  for any $\varepsilon\in \big(0,\frac{q(\bar{\lambda}_1-\bar{\lambda}_2)}{2}\big)$ and  $\Delta\in (0,\Delta_1^*\wedge \Delta_2^*]$, 
 \begin{align}\label{eqf6.36}
 \mathbb{W}^{q}_{q}\left(\mathrm{P}^{\Delta,M,*}_{t_k}\boldsymbol{\delta}_0,
 \mathrm{P}^{\Delta,M,*}_{t_{k+n}}\boldsymbol{\delta}_0\right)
 &=\mathbb{W}^{q}_{q}\Big(\mathrm{P}^{\Delta,M,*}_{t_k}\boldsymbol{\delta}_0,
 \mathrm{P}^{\Delta,M,*}_{t_k}(\mathrm{P}^{\Delta,M,*}_{t_n}\boldsymbol{\delta}_0)\Big)\nn\\
 &\leq \mathbb{E}|\bar{Y}^{1,M,\boldsymbol{\delta}_0}_{t_n}|^qe^{-\big(\frac{q(\bar{\lambda}_1-\bar{\lambda}_2)}{2}-
 \varepsilon\big)t_{k}} \leq 
 Ce^{-\big(\frac{q(\bar{\lambda}_1-\bar{\lambda}_2)}{2}-\varepsilon\big)t_{k}},
 \end{align}
 %where {\color{red}$\bar{Y}^{1,M}_{t_n}$} is the first component of the TEM numerical solution with the  initial value $X^{i}_0=0$, $i=1,\cdots, M$.
%This, together with  Theorem \ref{L7.4},   yields  that for any $\Delta\in (0,\Delta_1^*\wedge \Delta_2^*]$,
%\begin{align}\label{eqf6.36}
%\mathbb{W}^{q}_{q}\left(\mathrm{P}^{\Delta,M,*}_{t_k}\boldsymbol{\delta}_0,\mathrm{P}^{\Delta,M,*}_{t_{k+n}}\boldsymbol{\delta}_0\right)\leq  Ce^{-\big(\frac{q(\bar{\lambda}_{1}-\bar{\lambda}_2)}{2}-\varepsilon\big)t_{k}},
%\end{align}
which  implies that $\left\{\mathrm{P}^{\Delta,M,*}_{t_k}\boldsymbol{\delta}_0\right\}_{k=1}^{\infty}$ is a cauchy sequence in $\mathcal{P}_{q}(\RR^{d})$.  Due to the completeness of space $\mathcal{P}_{q}(\RR^{d})$ under the $L^q$-Wasserstein distance,  there exists a unique probability measure $\mu^{\Delta,M,*}\in \mathcal{P}_{q}(\RR^{d})$ such that $\mathbb{W}_{q}\left(\mathrm{P}^{\Delta,M,*}_{t_{k+n}}\boldsymbol{\delta}_0,\mu^{\Delta,M,*}\right)\rightarrow 0 $ as $n\rightarrow\infty$. Combining this with \eqref{eqf6.36}, and employing the continuity of $L^q$-Wasserstein distance (see \cite[p.97, Corollary 6.11]{CV2008})  we obtain
\begin{align} \label{7.7}
\mathbb{W}^q_{q}\left(\mathrm{P}^{\Delta,M,*}_{t_k}\boldsymbol{\delta}_0,\mu^{\Delta,M,*}\right)= \lim_{n\rightarrow\infty}\mathbb{W}^q_{q}\left(\mathrm{P}^{\Delta,M,*}_{t_k}\boldsymbol{\delta}_0,\mathrm{P}^{\Delta,M,*}_{t_{k+n}}\boldsymbol{\delta}_0\right) \leq Ce^{-\big(\frac{q(\bar{\lambda}_1-\bar{\lambda}_2)}{2}-\varepsilon\big)t_{k}}.
\end{align}
Furthermore, employing the continuity of $L^q$-Wasserstein distance  again and \eqref{eqf6.36} leads that for any $n\geq0$,
\begin{align*}
\mathbb{W}^q_{q}\left(\mathrm{P}^{\Delta,M,*}_{t_n}\mu^{\Delta,M,*},\mu^{\Delta,M,*}\right)&=\lim_{k\rightarrow\infty}\mathbb{W}^q_{q}\big(\mathrm{P}^{\Delta,M,*}_{t_n}(\mathrm{P}^{\Delta,M,*}_{t_k}\boldsymbol{\delta}_0),\mathrm{P}^{\Delta,M,*}_{t_k}\boldsymbol{\delta}_0\big)\nn\
\\&=\lim_{k\rightarrow\infty}\mathbb{W}^q_{q}(\mathrm{P}^{\Delta,M,*}_{t_{k+n}}\boldsymbol{\delta}_0,\mathrm{P}^{\Delta,M,*}_{t_{k}}\boldsymbol{\delta}_0)=0,
\end{align*}
which verifies that $\mu^{\Delta,M,*}\in \mathcal{P}_{q}(\RR^{d})$ is indeed  invariant.
Moreover,  for any $\mu_0\in \mathcal{P}_{p}(\RR^{d})$, by  Lemma \ref{L7.3} one deduces that for any $\Delta\in (0,\Delta_2^*]$,
\begin{align}
\lim_{k\rightarrow\infty}\mathbb{W}^q_q\left(\mathrm{P}^{\Delta,M,*}_{t_k}\mu_0,\mu^{\Delta,M,*}\right)&=\lim_{k\rightarrow\infty}\mathbb{W}^q_{q}(\mathrm{P}^{\Delta,M,*}_{t_k}\mu_0,\mathrm{P}^{\Delta,M,*}_{t_k}\mu^{\Delta,M,*})\nn\
\\&\leq \lim_{k\rightarrow\infty}2^{q-1}\big(\mu_0(|\cdot|^q)+\mu^{\Delta,M,*}(|\cdot|^q)\big)e^{-\big(\frac{q(\bar{\lambda}_1-\bar{\lambda}_2)}{2}-\varepsilon\big)t_k}=0,\nn\
\end{align}
which implies that the invariant probability measure $\mu^{\Delta,M,*}$ is unique.  On the other hand, owing to \eqref{7.7}, $\mathrm{P}^{\Delta,M,*}_{t_k}\boldsymbol{\delta}_0\rightarrow\mu^{\Delta,M,*}$ weakly. By the Skorohod representation theorem \cite[Theorem 3.30]{MR1876169}, there is a  probability space $ (\tilde{\Omega},~\tilde{\mathcal{F}},~\tilde{\mathbb{P}})$ and a random variable sequence $\{\eta_k\}_{k\geq 0}$, $\eta$ defined on it to $\RR^{d}$  with $\eta_k\sim \mathrm{P}^{\Delta,M,*}_{t_k}\boldsymbol{\delta}_0,~k=1,2,\cdots$, and $\eta\sim \mu^{\Delta,M,*}$  such that $\eta_k\rightarrow \eta$ a.s. as $k\rightarrow \infty$. Thus, using the Fatou lemma yields  that  
\begin{align*}
\mu^{\Delta,M,*}(|\cdot|^p)= \tilde{\E} |\eta|^p\leq \liminf_{k\rightarrow \infty} \tilde{\E}|\eta_k|^p =\liminf_{k\rightarrow \infty} \E |\bar{Y}^{1,M, \boldsymbol{\delta}_0}_{t_{ k}}|^p\leq C , 
\end{align*}
where $\tilde{\E}(\cdot)  $ denotes the expectation with respect to $\tilde{\mathbb{P}}$, which implies   $\mu^{\Delta,M,*}\in \mathcal{P}_{p}(\RR^{d})$.   
\end{proof}
\subsection{The Uniform-time Convergence Rate}\label{sb6.3}

{This subsection focuses on analyzing the uniform-in-time convergence rate of the numerical solution generated by the TEM scheme  \eqref{eq2.1} to the exact solution of the MV-SDE \eqref{eq3.1} in the {\color{blue}$L^q$}-Wasserstein distance. To achieve this goal, we first establish the uniform-in-time propagation of chaos result.  
We then obtain the the uniform-in-time convergence error between  the TEM numerical solution generated by   \eqref{eq2.1} and the exact solution of the IPS. Thus, the desired uniform-in-time convergence rate follows.

\begin{theorem}[uniform-in-time propagation of chaos]\label{thmps}
 Let Assumptions \ref{ass1}, \ref{ass5} and \ref{ass6} hold with $p>2$ and  $X_0\in L^{p}_{0}$. Then for any $q\in [2, p_0]\cap[2,p)$,  there exists a constant $C(=C_{q, p, d})$ such that
\begin{align*}%\label{eqcc3.3}
\sup_{1\leq i\leq M}\sup_{t\geq 0}\mathbb{E}\mathbb{W}^{q}_{q}\big(\mathcal{L}^{X^i}_{t},\mathcal{L}^{X,M}_{t}\big)\leq\sup_{1\leq i\leq M}\sup_{t\geq 0}\mathbb{E}\big|X^{i}_{t}-X^{i,M}_{t}\big|^{q}\leq C \Upsilon_{M,q,p,d},
\end{align*}
where $\Upsilon_{M,q,p,d}$ is defined in Lemma \ref{Lem3.1}.
\end{theorem}
\begin{proof}
 For convenience, define  $
 \displaystyle\varsigma =\frac{q(\bar{\lambda}_1-\bar{\lambda}_2)}{4}>0,~  U^{i,M}_{t}=X^{i}_{t}-X^{i,M}_{t}, ~t\geq 0.$ Using the It\^o formula, one derives from \eqref{eq2} and \eqref{eq3} that for any $t\geq 0$, 
\begin{align*}
\mathrm{d}\Big(e^{\va t}|U^{i,M}_{t}|^{q}\Big)&=\va e^{\va t}|U^{i,M}_{t}|^{q}+\frac{q}{2}e^{\va t} |U^{i,M}_{t}|^{q-2}\Big[2(U^{i,M}_{t})^{T}\big(f(X^{i}_{t},\mathcal{L}^{X^i}_{t})-f(X^{i,M}_{t},\mathcal{L}^{X,M}_{t})\big)
\\&~~~+(q-1)\big|g(X^{i}_{t},\mathcal{L}^{X^i}_{t})-g(X^{i,M}_{t},\mathcal{L}^{X,M}_{t})\big|^2\Big]\mathrm{d}t
 \nn\\
& +q e^{\va t}|U^{i,M}_{t}|^{q-2}(U^{i,M}_{t})^{T}\big(g(X^{i}_{t},\mathcal{L}^{X^{i}}_{t})-g(X^{i,M}_{t},
\mathcal{L}^{X,M}_{t})\big)\mathrm{d}B^{i}_{t}.\end{align*}
 Thanks to $q\leq p_0$, integrating the above equation from $0$ to $t$ and taking the expectations on both sides, by Assumption \ref{ass6}, we obtain   
\begin{align}\label{eqq6.39}
\mathbb{E}\Big(e^{\va t}|U^{i,M}_{t}|^{q}\Big)&\leq -\Big(\frac{q\bar{\lambda}_1}{2}-\va\Big) \int_{0}^{t}  e^{\va s}\mathbb{E}|U^{i,M}_{s}|^{q}\mathrm{d}s + \frac{q\bar{\lambda}_2}{2}\mathbb{E}\int_{0}^{t}e^{\va s} |U^{i,M}_{s}|^{q-2} \mathbb{W}_{2}^{2}(\mathcal{L}^{X^i}_{s},\mathcal{L}^{X,M}_{s}) \mathrm{d}s .
\end{align} 
%Differ from the empirical measure of the IPS given by \eqref{eq+1}, we 
Define  the empirical measure of the {N-IPS} \eqref{eq3} by $ \mu^{X,M}_{t}:=\frac{1}{M}\sum_{i=1}^{M}\boldsymbol{\delta}_{X^i_t}.$ For any  $\varepsilon >0$ (defined latter),  using the elementary inequality yields 
\begin{align*}
\mathbb{W}^{2}_{2}(\mathcal{L}^{X^i}_s,\mathcal{L}^{X,M}_{s})&\leq\Big[ (1+\varepsilon )\mathbb{W}^2_{2}(\mu^{X,M}_{s},\mathcal{L}^{X,M}_{s})
+\Big(1+\frac{1}{\varepsilon }\Big)\mathbb{W}^{2}_{2}(
\mathcal{L}^{X^{i}}_s,\mu^{X,M}_{s}) \Big].
\end{align*}
Inserting the above inequality into \eqref{eqq6.39} gives that 
\begin{align}\label{eq6.34}
\mathbb{E}\Big(e^{\va t}|U^{i,M}_{t}|^{q}\Big)&\leq -\Big(\frac{q\bar{\lambda}_1}{2}-\va\Big) \mathbb{E}\int_{0}^{t}  e^{\va s}|U^{i,M}_{s}|^{q}\mathrm{d}s  +\mathcal{A}_1+\mathcal{A}_2, 
\end{align}
where 
\begin{align*}
\mathcal{A}_1&=\frac{q\bar{\lambda}_2(1+\varepsilon )}{2}\int_{0}^{t}e^{\va s}\mathbb{E}\Big(|U^{i,M}_{s}|^{q-2}\mathbb{W}^2_{2}(\mu^{X,M}_{s},\mathcal{L}^{X,M}_{s})\Big)\mathrm{d}s,
\\
\mathcal{A}_2&=\frac{q\bar{\lambda}_2}{2}\Big(1+\frac{1}{\varepsilon }\Big)\int_{0}^{t}e^{\va s}\mathbb{E}\Big(|U^{i,M}_{s}|^{q-2}\mathbb{W}^{2}_{2}(\mathcal{L}^{X^{i}}_s,\mu^{X,M}_{s})\Big)\mathrm{d}s.
\end{align*}
Owing to the identical distribution property of $U^{i,M}_{t}$, $i=1,2,\cdots, M$, one has
$$\mathbb{W}^{q}_{2}(\mu^{X,M}_{s},\mathcal{L}^{X,M}_{s})\leq \frac{1}{M}\sum_{i=1}^{M}\mathbb{E}|U^{i,M}_{s}|^{q}=\mathbb{E}|U^{i,M}_{s}|^{q} .$$
Then using the Young inequality derives that
\begin{align}\label{eq6.35}
\mathcal{A}_1&\leq \frac{q\bar{\lambda}_2(1+\varepsilon )}{2}\int_{0}^{t}e^{\va s}\Big(\frac{q-2}{q}\mathbb{E}|U^{i,M}_{s}|^{q}+\frac{2}{q}\mathbb{E}\mathbb{W}^q_{2}(\mu^{X,M}_{s},
\mathcal{L}^{X,M}_{s})\Big)\mathrm{d}s \nn\\
& \leq \frac{q\bar{\lambda}_2(1+\varepsilon )}{2}\int_{0}^{t}e^{\va s}\mathbb{E}|U^{i,M}_{s}|^{q}\mathrm{d}s.
\end{align}
Utilizing the Young inequality again gives that for any $\varepsilon >0$,
\begin{align}\label{eq6.36}
\mathcal{A}_2\leq\frac{q\bar{\lambda}_2\varepsilon }{2}\int_{0}^{t}e^{\va s}\mathbb{E}|U^{i,M}_{s}|^{q}\mathrm{d}s+\frac{q\bar{\lambda}_2C_{\varepsilon }}{2}\int_{0}^{t}e^{\va s}\mathbb{E}\mathbb{W}^{q}_{2}\big(\mathcal{L}^{X^i}_s,\mu^{X,M}_{s}\big)\mathrm{d}s,
\end{align}
where $C_{\varepsilon }=\varepsilon^{-q+1}(1+ {\varepsilon })^{\frac{q}{2}}$. Due to $\bar{\lambda}_1-\bar{\lambda}_2>0$,  choose $  \varepsilon =  ({\bar{\lambda}_1-\bar{\lambda}_2})/{(4\bar{\lambda}_2)}.$ 
Substituting \eqref{eq6.35} and \eqref{eq6.36}  into \eqref{eq6.34} and utilizing the H\"older inequality leads to  
\begin{align*}
\mathbb{E}\Big(e^{\va t}|U^{i,M}_{t}|^{q}\Big) &\leq  C\int_{0}^{t}e^{\va s}\mathbb{E}\mathbb{W}^{q}_{2}\big(\mathcal{L}^{X^i}_s,\mu^{X,M}_{s}\big)\mathrm{d}s. 
\end{align*}
Due to $q< p$, applying Lemma  \ref{Lem3.1}  with $\tilde{q}=p$ and Lemma \ref{Le5.1} implies that
\begin{align*}
\mathbb{E}\Big(e^{\va t}|U^{i,M}_{t}|^{q}\Big)&\leq    C_{q, p, d}  \sup_{s\geq 0} \big(\mathbb{E}|X^{i}_{s}|^{p}\big)^{\frac{q}{p}} \Upsilon_{M,q,p,d}e^{\va t}\leq C_{q, p, d} \Upsilon_{M,q,p,d}e^{\va t}. 
\end{align*}
Thus, we obtain that 
$
\sup_{t\geq0}\mathbb{E}|U^{i,M}_{t}|^{q} \leq C_{q, p, d}\Upsilon_{M,q,p,d},
$
where $C_{q, p, d}$ is independent of $t$. This implies the desired result.
\end{proof}

Next, we are committed to  proving the convergence rate of the numerical solution generated by the TEM scheme \eqref{eq2.1}  to the exact solution of the IPS.

\begin{theorem}\label{L+1}
Let Assumptions \ref{ass1}, \ref{ass4}, \ref{ass5}, \ref{ass6}  hold with $p\geq (6\alpha+2)$, $p_0>2$ and $X_0\in L^{p}_{0}$. Then for any   $q\in [2,p_0\wedge ((2p-6\alpha)/(3\alpha+2)))\cap[2,p/((\alpha+2)\vee2\alpha)]$, the TEM numerical solutions defined by \eqref{eq2.1} with $\kappa\in [\alpha(q+2)/2(p-q),1/3)$ satisfy
\begin{align*}
\sup_{  M\geq 1}\sup_{1\leq i\leq M}\sup_{k\geq 0}\Big(\mathbb{E}\big|Y^{i,M}_{t_{k}}-X^{i,M}_{t_{k}}\big|^{q}\vee \mathbb{E}\big|\bar{Y}^{i,M}_{t_{k}}-X^{i,M}_{t_{k}}\big|^{q}\Big)\leq C\Delta^{\frac{q}{2}},~~\forall \Delta\in (0,\Delta_1^*],
\end{align*}
where $C$ is a positive constant independent of    $k$, $M$ and $\Delta$.
\end{theorem}
\begin{proof} Fix a  $q\in [2,p_0\wedge ((2p-6\alpha)/(3\alpha+2)))\cap[2,p/((\alpha+2)\vee2\alpha)]$ and a $\kappa\in [\alpha(q+2)/2(p-q),1/3)$.  Recalling  $\va=\frac{q(\bar{\lambda}_1-\bar{\lambda}_2)}{4}>0$ in Theorem \ref{thmps}, using the It\^o formula, we obtain  from \eqref{eq2} and \eqref{eq4.35} that for any $t\in [t_{k},t_{k+1})$,
\begin{align*}
e^{\va t} \mathbb{E}&\big|\tilde{Y}^{i,M}_{t}-X^{i,M}_{t}\big|^{q}  =e^{ \va t _{k}}\mathbb{E}\big|Y^{i,M}_{t_{k}}-X^{i,M}_{t_{k}}\big|^q+\va\int_{t_{k}}^{t}e^{ \va s }\mathbb{E}|\tilde{Y}^{i,M}_{s}-X^{i,M}_{s}|^{q}\mathrm{d}s\nn\\
&~~~
+\frac{q}{2}\mathbb{E}\int_{t_{k}}^{t}e^{\va s}\big|\tilde{Y}^{i,M}_{s}-X^{i,M}_{s}\big|^{q-2} 
 \Big[2(\tilde{Y}^{i,M}_{s}-X^{i,M}_{s})^{T}
\big(f(Y^{i,M}_{s},\mathcal{L}^{Y,M}_{s})-f(X^{i,M}_{s},\mathcal{L}^{X,M}_{s})\big)\nn\
\\&~~~~~~~~~~~~~~~~~~~~~~~~~~~~~~~~~~~~~~~~~~~~~+(q-1)\big|g(Y^{i,M}_{s},\mathcal{L}^{Y,M}_{s})-g(X^{i,M}_{s},\mathcal{L}^{X,M}_{s})\big|^2\Big]\mathrm{d}s.
\end{align*}
Then using the Young inequality and Assumption \ref{ass6} yields  
\begin{align}\label{e5.60}
e^{ \va t }\mathbb{E}\big|\tilde{Y}^{i,M}_{t}-X^{i,M}_{t}\big|^{q} 
 &\leq e^{ \va t _{k}}\mathbb{E}\big|Y^{i,M}_{t_{k}}-X^{i,M}_{t_{k}}\big|^q+\Big(\va-\frac{q\bar{\lambda}_1}{2}\Big)\int_{t_{k}}^{t}e^{ \va s }\mathbb{E}\big|\tilde{Y}^{i,M}_{s}-X^{i,M}_{s}\big|^{q}\mathrm{d}s\nn\
\\& +\frac{q\bar{\lambda}_2}{2}\int_{t_{k}}^{t}e^{ \va s }\mathbb{E}\Big(\big|\tilde{Y}^{i,M}_{s}-X^{i,M}_{s}\big|^{q-2}\mathbb{W}^{2}_{2}(\mathcal{L}^{\tilde{Y},M}_{s},\mathcal{L}^{X,M}_{s})\Big)\mathrm{d}s+J_1+J_2.
\end{align}
where 
\begin{align*}
J_1&=q\mathbb{E}\int_{t_{k}}^{t}e^{ \va s }\big|\tilde{Y}^{i,M}_{s}-X^{i,M}_{s}\big|^{q-1}\big|f(Y^{i,M}_{s},\mathcal{L}^{Y,M}_{s})-f(\tilde{Y}^{i,M}_{s},\mathcal{L}^{\tilde{Y},M}_{s})\big|\mathrm{d}s
 \\
J_2&=\frac{q}{2}\Big(q-1+\frac{1}{p_0-q}\Big)\mathbb{E}\int_{t_{k}}^{t}e^{ \va s }\big|\tilde{Y}^{i,M}_{s}-X^{i,M}_{s}\big|^{q-2}\big|g(Y^{i,M}_{s},\mathcal{L}^{Y,M}_{s})-g(\tilde{Y}^{i,M}_{s},\mathcal{L}^{\tilde{Y},M}_{s})\big|^2\mathrm{d}s.
\end{align*}
Using the Young inequality, the H\"older inequality and  the identical distribution property of $\tilde{Y}^{i,M}_{t}-X^{i,M}_{t}$, $i=1,2,\cdots M$, we obtain that
\begin{align}\label{eq*+35}
 \mathbb{E}\Big(\big|\tilde{Y}^{i,M}_{s}-X^{i,M}_{s}\big|^{q-2}\mathbb{W}^{2}_{2}(\mathcal{L}^{\tilde{Y},M}_{s},\mathcal{L}^{X,M}_{s})\Big)\nn\
 &\leq  \frac{q-2}{q}\mathbb{E}\big|\tilde{Y}^{i,M}_{s}-X^{i,M}_{s}\big|^{q}+\frac{2}{q}\mathbb{E}\mathbb{W}^{q}_{2}\big(\mathcal{L}^{\tilde{Y},M}_{s},\mathcal{L}^{X,M}_{s}\big)\nn\
 \nn\\
 &= \mathbb{E}\big|\tilde{Y}^{i,M}_{s}-X^{i,M}_{s}\big|^{q}.
\end{align}
 Using the Young inequality yields that 
\begin{align}\label{eee5.59}
J_1+J_2&\leq \va\int_{t_{k}}^{t}e^{ \va s }\mathbb{E}\big|\tilde{Y}^{i,M}_{s}-X^{i,M}_{s}\big|^q\mathrm{d}s+C\int_{t_{k}}^{t}e^{ \va s }\mathbb{E}\big|f(Y^{i,M}_{s},\mathcal{L}^{Y,M}_{s})-f(\tilde{Y}^{i,M}_{s},\mathcal{L}^{\tilde{Y},M}_{s})\big|^q\mathrm{d}s\nn\
\\&~~~+C\int_{t_{k}}^{t}e^{ \va s }\mathbb{E}\big|g(Y^{i,M}_{s},\mathcal{L}^{Y,M}_{s})-g(\tilde{Y}^{i,M}_{s},\mathcal{L}^{\tilde{Y},M}_{s})\big|^q\mathrm{d}s.
\end{align}
Thanks to $2\leq q\leq p/((\alpha+2)\vee(2\alpha))$,
by the similar arguments as \eqref{eq*+16}-\eqref{eq*+37}, using Assumption \ref{ass4}, Theorem \ref{L7.4} and Corollary \ref{cor+1},  we derive that
\begin{align*}%\label{ee5.60*}
\mathbb{E}\big|f(Y^{i,M}_{s},\mathcal{L}^{Y,M}_{s})-f(\tilde{Y}^{i,M}_{s},\mathcal{L}^{\tilde{Y},M}_{s})\big|^q\vee\mathbb{E}\big|g(Y^{i,M}_{s},\mathcal{L}^{Y,M}_{s})-g(\tilde{Y}^{i,M}_{s},\mathcal{L}^{\tilde{Y},M}_{s})\big|^q\leq C\Delta^{\frac{q}{2}}.
\end{align*}
Inserting the above inequality into \eqref{eee5.59} yields that
\begin{align}\label{e5.61}
J_1+J_2\leq \va\int_{t_{k}}^{t}e^{ \va s }\mathbb{E}|\tilde{Y}^{i,M}_{s}-X^{i,M}_{s}|^q\mathrm{d}s+
C\Delta^{\frac{q}{2} } \int_{t_{k}}^{t}e^{ \va s }\mathrm{d}s.
\end{align}
Then combining  \eqref{eq*+35} and \eqref{e5.61} with \eqref{e5.60}, recalling the definition of $\va$  yields
\begin{align*}
e^{ \va t }\mathbb{E}\big|\tilde{Y}^{i,M}_{t}-X^{i,M}_{t}\big|^{q}& 
 \leq e^{ \va t _{k}}\mathbb{E}\big|Y^{i,M}_{t_{k}}-X^{i,M}_{t_{k}}\big|^q+C\Delta^{\frac{q}{2} } \int_{t_{k}}^{t}e^{ \va s }\mathrm{d}s.
\end{align*}
Thus, ~ $\mathbb{E}\big|\tilde{Y}^{i,M}_{t}-X^{i,M}_{t}\big|^{q} \leq \mathbb{E}\big|Y^{i,M}_{t_{k}}-X^{i,M}_{t_{k}}\big|^qe^{-\va (t-t_{k}) }+C\Delta^{\frac{q}{2}+1}.$
Using the Fatou Lemma and the inequality $|\pi_{\Delta}(x)-\pi_{\Delta}(y)|\leq |x-y|$,  $\forall x,y\in \RR^{d}$, we derive from  \eqref{eqcc3.37} that
\begin{align}\label{eee5.62}
\mathbb{E}\big|Y^{i,M}_{t_{k+1}}-\pi_{\Delta}(X^{i,M}_{t_{k+1}})\big|^q
   &\leq\mathbb{E}\big|\bar{Y}^{i,M}_{t_{k+1}}-X^{i,M}_{t_{k+1}}\big|^{q} 
 \leq \liminf_{t\rightarrow t_{k+1}^-}\mathbb{E}\big|\tilde{Y}^{i,M}_{t}-X^{i,M}_{t}\big|^{q} \nn\\
  &\leq
 \mathbb{E}\big|Y^{i,M}_{t_{k}}-X^{i,M}_{t_{k}}\big|^qe^{-\va\Delta} +C\Delta^{\frac{q}{2}+1}.
\end{align}
For any $\Delta\in (0,1]$, define a set $
\Upsilon^{i,M}_{k,\Delta}:=\big\{\omega:|X^{i,M}_{t_{k}}|\geq \varphi^{-1}(H\Delta^{-\kappa})\big\}.
$
Note that $\pi_{\Delta}(X^{i,M}_{t_{k}})=X^{i,M}_{t_{k}}$ for any $\omega\in (\Upsilon^{i,M}_{k,\Delta})^{c}$. Utilizing the Young inequality and the Chebyshev inequality, for any $\delta>0$, we obtain from  Lemma \ref{Le5.1} and Theorem \ref{L7.4} that 
\begin{align}\label{eq5.62*}
\mathbb{E}\big|Y^{i,M}_{t_{k}}-X^{i,M}_{t_{k}}\big|^q&
=\mathbb{E}\Big(\big|Y^{i,M}_{t_{k}}-\pi_{\Delta}(X^{i,M}_{t_{k}})\big|^qI_{\big(\Upsilon^{i, M}_{k,\Delta}\big)^c}\Big) +\mathbb{E}\Big(\big|Y^{i,M}_{t_{k}}-X^{i,M}_{t_{k}}\big|^qI_{\Upsilon^{i, M}_{k,\Delta}}\Big)\nn\
\\&\leq \mathbb{E}\big|Y^{i,M}_{t_{k}}-\pi_{\Delta}(X^{i,M}_{t_{k}})\big|^q+\mathbb{E}\Big(\big|Y^{i,M}_{t_{k}}-X^{i,M}_{t_{k}}\big|^qI_{\Upsilon^{i, M}_{k,\Delta}}\Big).
\end{align}
Using the H\"older inequality and the Chebyshev inequality,  we obtain that
\begin{align*}
\mathbb{E}\Big(\big|Y^{i,M}_{t_{k}}-X^{i,M}_{t_{k}}\big|^qI_{\Upsilon^{i, M}_{k,\Delta}}\Big)&\leq \big(\mathbb{E}|Y^{i,M}_{t_{k}}-X^{i,M}_{t_{k}}\big|^p\Big)^{\frac{q}{p}}\big(\mathbb{P}(\Upsilon^{i, M}_{k,\Delta})\big)^{\frac{p-q}{p}}\leq \frac{2^q \sup\limits_{k\geq 0}\Big(\E |Y^{i,M}_{t_{k}}|^p \vee \E |X^{i,M}_{t_{k}}|^p\Big)} {\big(\varphi^{-1}(H\Delta^{-\kappa})\big)^{p-q}}.
\end{align*}
Thanks to $p\geq 6\alpha+2$ and $\kappa \in [\alpha(q+2)/(2(p-q)), 1/3)$,  by virtue of Lemma \ref{Le5.1} and Theorem \ref{L7.4},  recalling the definition of $\varphi^{-1}$ in Remark \ref{rem5.2},  we yield 
\begin{align*}
 \mathbb{E}\Big(\big|Y^{i,M}_{t_{k}}-X^{i,M}_{t_{k}}\big|^qI_{\Upsilon^{i, M}_{k,\Delta}}\Big)&\leq {C}{\Big(\frac{H\Delta^{-\kappa}}{2(L_f \vee L_g)}-1\Big)^{-\frac{p-q}{\alpha}}}\leq C\Delta^{\frac{\kappa(p-q)}{\alpha}}\leq C\Delta^{\frac{q}{2}+1}.
\end{align*}
Then inserting the above inequality into \eqref{eq5.62*} yields that
\begin{align} \label{eee5.64*}
\mathbb{E}\big|Y^{i,M}_{t_{k}}-X^{i,M}_{t_{k}}\big|^q&\leq \mathbb{E}\big|Y^{i,M}_{t_{k}}-\pi_{\Delta}(X^{i,M}_{t_{k}})\big|^q+C\Delta^{\frac{q}{2}+1}.
\end{align}
Substituting the above inequality  into \eqref{eee5.62}  gives  
\begin{align*}
\mathbb{E}\big|Y^{i,M}_{t_{k+1}}-\pi_{\Delta}(X^{i,M}_{t_{k+1}})\big|^q&\leq \mathbb{E}\big|Y^{i,M}_{t_{k}}-\pi_{\Delta}(X^{i,M}_{t_{k}})\big|^{q}
e^{-\va \Delta} +C\Delta^{\frac{q}{2}+1} \leq \cdots
\\&\leq \mathbb{E}\big|X^{i}_0-\pi_{\Delta}(X^{i}_{0})\big|^q e^{-\va t_{k+1}} +C\Delta^{\frac{q}{2}+1}\sum_{n=0}^{k}e^{-\va n \Delta} 
\\&\leq \mathbb{E}\big|X^{i}_0-\pi_{\Delta}(X^{i}_{0})\big|^q+ C\Delta^{\frac{q}{2} }  .
\end{align*}
Due to $\kappa \geq q\alpha/(2(p-q))$,  $\kappa (p-q)/\alpha\geq q/2$. By virtue of Lemmas \ref{L6.14} and \ref{Le5.1} it follows that
\begin{align*}
\mathbb{E}\big|Y^{i,M}_{t_{k+1}}-\pi_{\Delta}(X^{i,M}_{t_{k+1}})\big|^q\leq   C\Delta^{\frac{q}{2}},
\end{align*}
where  $C$ is independent of $k$, $\Delta$ and $M$. Combining the above inequality with \eqref{eee5.64*} implies 
\begin{align*}
\sup_{  M\geq 1}\sup_{1\leq i\leq M}\sup_{k\geq 0}\mathbb{E}\big|Y^{i,M}_{t_{k}}-X^{i,M}_{t_{k}}\big|^{q}\leq C\Delta^{\frac{q}{2}}.
\end{align*}
Furthermore, by a similar argument  as in Theorem \ref{th5.5}, using Lemma \ref{L6.14} and Theorem \ref{L7.4} we can obtain another desired assertion.
The proof is complete.
\end{proof}

Combining the results of Theorems \ref{thmps} and \ref{L+1}, the uniform-time strong convergence rate between the exact and numerical solutions
follows directly.
\begin{theorem}\label{Th5.16*} Under the conditions of Theorem \ref{L+1},
  for any   $q\in [2,p_0\wedge ((2p-6\alpha)/(3\alpha+2)))\cap[2,p/((\alpha+2)\vee2\alpha)]$ and any $ \Delta\in (0,\Delta_1^*]$,  the TEM numerical solutions with $\kappa\in [\alpha(q+2)/2(p-q),1/3)$ satisfy that
\begin{align*}
\sup_{1\leq i\leq M}\sup_{k\geq0}\Big(\mathbb{E}|X^{i}_{t_{k}}-Y^{i,M}_{t_k}|^q\vee\mathbb{E}|X^{i}_{t_{k}}-\bar{Y}^{i,M}_{t_k}|^q\Big)\leq C \Upsilon_{M,q,p,d}+C\Delta^{\frac{q}{2}},~~~
\end{align*}
where the constant $C$ is independent of $k$, $M$ and $\Delta$.
\end{theorem}

By virtue of  Theorem \ref{Th5.16*} we can obtain the convergence rate of the numerical invariant measure to the exact one in the $L^q$-Wasserstein.
\begin{theorem}\label{Th5.21} Assume the conditions of Theorem \ref{Th5.16*} hold. Then for  any   $q\in [2,p_0\wedge ((2p-6\alpha)/(3\alpha+2)))\cap[2,p/((\alpha+2)\vee2\alpha)]$ and any $ \Delta\in (0,\Delta_1^*\wedge \Delta_2^*]$,
the numerical invariant measure  $\mu^{\Delta,M,*}$ with  $\kappa\in [\alpha(q+2)/2(p-q),1/3)$ satisfies
$$
\mathbb{W}^{q}_{q}(\mu^{\Delta,M,*},\mu^*)\leq C \Upsilon_{M,q,p,d}+C\Delta^{\frac{q}{2}},
$$
where the constant $C$ is independent of  $M$ and $\Delta$.
\end{theorem}

\section{Examples}\label{num}
In this section,  we give several examples of superlinear MV-SDEs  and carry out some numerical experiments to check %validate 
the effectiveness of the TEM scheme given by \eqref{eq2.1}.   Thanks to the propagation of chaos  Lemma \ref{lec3.4},  we only predict the convergence error between the numerical solutions of the TEM scheme and the exact solutions of the IPS  corresponding to \eqref{eq3.1}.  For any $T\geq0$, due to the identical distribution property of $X^{i,M}_{T}-\bar{Y}^{i,M}_{T}$ for  $  i=1,\cdots, M$, we have
\begin{align*}
\mathbb{E}\big|X^{i,M}_{T}-\bar{Y}^{i,M}_{T}\big|^{2}=\mathbb{E}\Big(\frac{1}{M}\sum_{i=1}^{M}\big|X^{i,M}_{T}-\bar{Y}^{i,M}_{T}\big|^{2}\Big).
\end{align*}
For the given sample number $N$, define the root mean square error (RMSE) at time  $T$  as
\begin{align*}
RMSE:= \bigg(\mathbb{E}\Big(\frac{1}{M}\sum_{i=1}^{M}\big|X^{i,M}_{T}-\bar{Y}^{i,M}_{T}\big|^{2}\Big) \bigg)^{\frac{1}{2} }\approx \bigg(\frac{1}{MN}\sum_{j=1}^{N}\sum_{i=1}^{M}\big|X^{i,M,(j)}_{T}-\bar{Y}^{i,M,(j)}_{T}\big|^{2}\bigg)^{\frac{1}{2}},
\end{align*}
where $X^{i,M,(j)}_{T}$ and $\bar{Y}^{i,M,(j)}_{T}$ represent  the $j$th independent copies of $X^{i,M}_{T}$ and $\bar{Y}^{i,M}_{T}$, respectively, $  j=1, \cdots, N$,
generated from the same Brownian motion. Since the closed form of the exact solution  of the IPS is unknown, we regard the numerical solution with the smaller step size $\Delta=2^{-16}$ as the exact solution in the estimation of the strong convergence rate.

\begin{expl}\label{expl7.1} {\rm 
{\bf \underline{Case 1 ($d=1$).}~ }~~ Recall the MV-SDE \eqref{Ne4} with
$$f(x,\mu)=x(-2-|x|)+\int x\mu(\mathrm{d}x), ~~~g(x,\mu)=|x|^{3/2}/2.$$ 
 Consider the corresponding IPS   
\begin{align}\label{Ne1}
\mathrm{d}X^{i,M}_{t}=X^{i,M}_{t}\Big(-2-|X^{i,M}_{t}|+\frac{1}{M}\sum_{i=1}^{M}X^{i,M}_{t}\Big)\mathrm{d}t
+\frac{1}{2}|X^{i,M}_{t}|^{\frac{3}{2}}\mathrm{d}B^{i}_{t}, ~~~~i=1,\cdots, M.
\end{align}
One observes that  Assumptions \ref{ass1}-\ref{ass6} hold and $f(0,\boldsymbol{\delta}_0)=g(0,\boldsymbol{\delta}_0)=0$. 
According to Corollary \ref{th6.2},  the exact solutions of MV-SDE and the corresponding IPS  are exponentially stable in  $L^{2}$ sense, namely,
$
\mathbb{E}|X_{t}|^2\leq \mathbb{E}|X_0|^2e^{-2t},~~ 
\mathbb{E}|X^{i,M}_{t}|^2\leq \mathbb{E}|X_0|^2e^{-2t},~ \forall t\geq 0.
$
    Figure \ref{FN1} captures the explosion behavior of the EM numerical solution of  \eqref{Ne1}. Furthermore, we give a  rigorous proof that  the EM numerical solution does not  keep the exponentially stable in mean-square for MV-SDE \eqref{Ne1}, see Lemma \ref{AL2}  for details.

Next, we use the TEM scheme \eqref{eq7.3} to carry out the numerical experiments.  
Let initial values $X^{i}_0$, $i=1,\cdots, M$,  obey a standard normal distribution $\mathcal{N}(0,1)$ independently. Take $T=1$, $N=10$,  $M=5000$, and $\Delta=2^{q}$, where $q\in\{-10,-11,-12,-13,-14\}$. Figure \ref{figure1} plots  the $\log_2 (RMSE)$ 
  as a function of $q$.  
   A fact is revealed that the TEM scheme possesses a  $1/2$-order convergence rate  with respect to time step size $\Delta$, which is consistent with our theoretical result.  Figure \ref{3} plots the  sample paths of the numerical solutions  of the TEM scheme for the initial value $\bar{Y}^{i,M}_0=18$, $\Delta=0.05$ and $M=2000$.   Comparing Figure \ref{3} with  Figure \ref{FN1},   one observes   that  the ``truncation device" in the TEM scheme suppresses the ``particle corruption''  arising in the EM iteration process successfully and enables the TEM numerical solution to realize the underlying exponential stability.
\begin{figure}[H]
  \centering
\includegraphics[width=12cm,height=4cm]{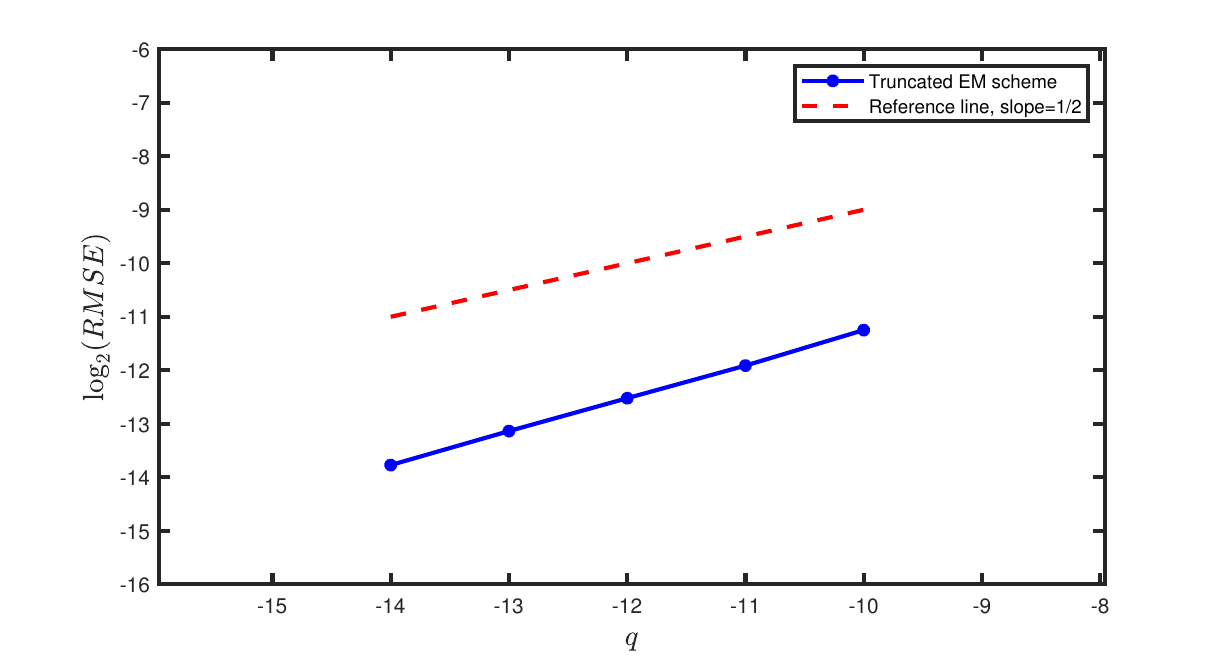}
\captionsetup{font=footnotesize}
  \caption{The numerical error v.s. time step size $\Delta$ at $t=1$.}
\label{figure1}
\end{figure}
\begin{figure}[H]
  \centering
\includegraphics[width=12cm,height=4cm]{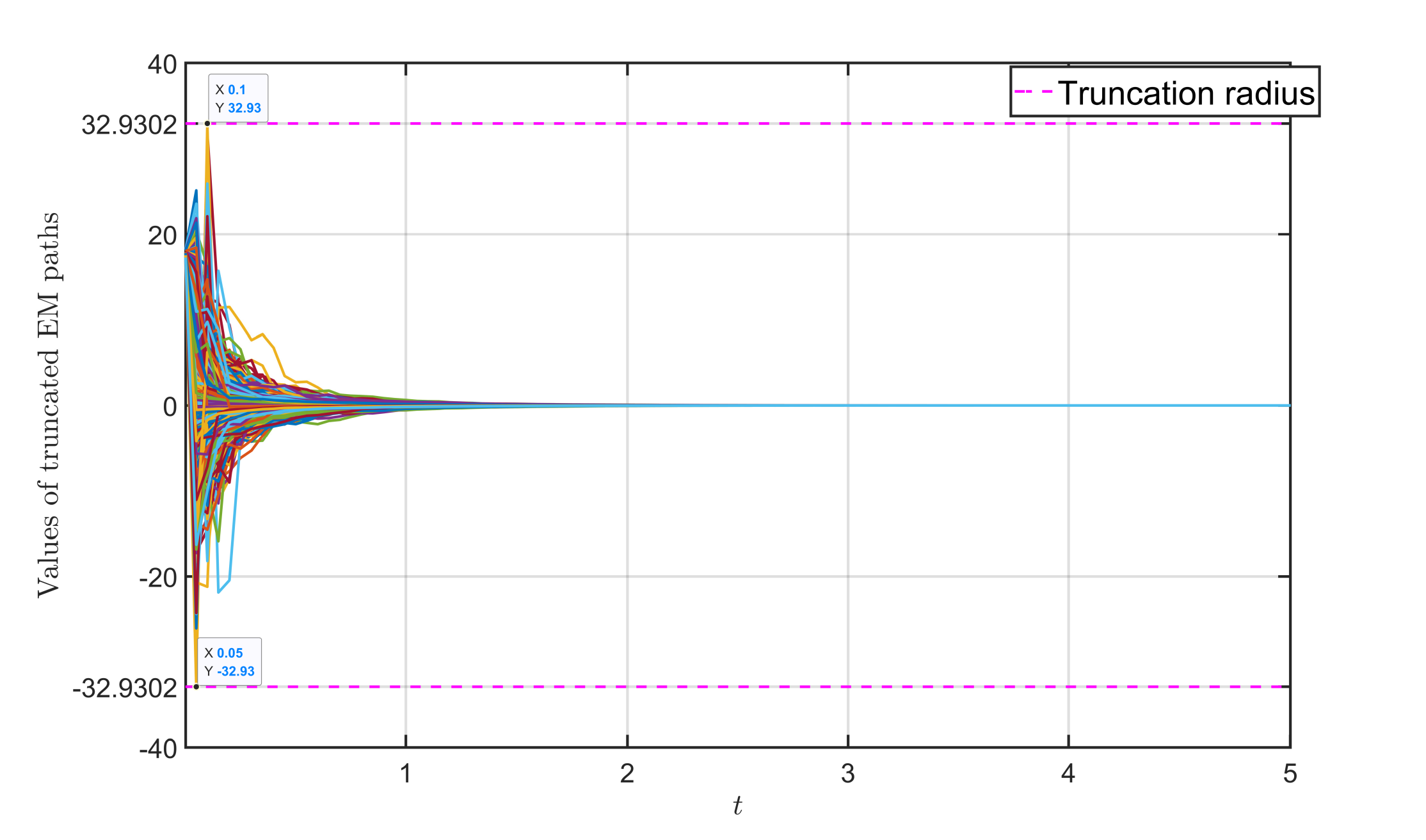}
\captionsetup{font=footnotesize}
  \caption{The sample paths of the  numerical solution by the TEM scheme  for the initial value $X_0=18$, $\Delta=0.05$ and $M=2000$.}
\label{3}
\end{figure}\vspace{-0.5em}
{\bf \underline{Case 2 ($d\geq 2$).}~ }
{ Now we check the effect  of dimension $d$ of the MV-SDE on the TEM scheme. Let $x=(x_1,x_2,\cdots, x_d)$, $B_t$ is a $d$-dimensional Brownian motion, the initial condition is a vector whose components are independent $\mathcal{N}(0,1)$-random variables, the coefficients $$f(x,\mu)=f_1(x)+\int_{\mathbb{R}^{d}}y\mu(\mathrm{d}y),~~
g(x)=|x|^{\frac{3}{2}}E_{d},$$ where $$f_1(x)=\Big(-2x_1-x_1\sqrt{x_1^2+\cdots+x_d^2}, \cdots, -2x_d-x_d\sqrt{x_1^2+\cdots+x_d^2} \Big)^{T},$$
and 
  $E_{d}$ is a $d\times d$ identity matrix. We implement the TEM scheme in MATLAB to test the convergence rate of the TEM numerical solution to the exact one.  Let initial values $X^{i}_0$, $i=1,\cdots, M$,  obey a standard normal distribution $\mathcal{N}(0,1)$ independently. Take $T=1$, $N=10$,  $M=5000$, and $\Delta=2^{q}$, where $q\in\{-10,-11,-12,-13,-14\}$. Figure \ref{4} depicts that the convergence rate of the TEM scheme with respect to 
time step size $\Delta$ across different dimensions
$d=1$,$d=2$, $d=4$ and $d=6$, respectively. 
The fact is revealed that the convergence rate is  order $1/2$ and independent of the dimension  $d$. }

\begin{figure}[H]
  \centering
\includegraphics[width=12cm,height=4cm]{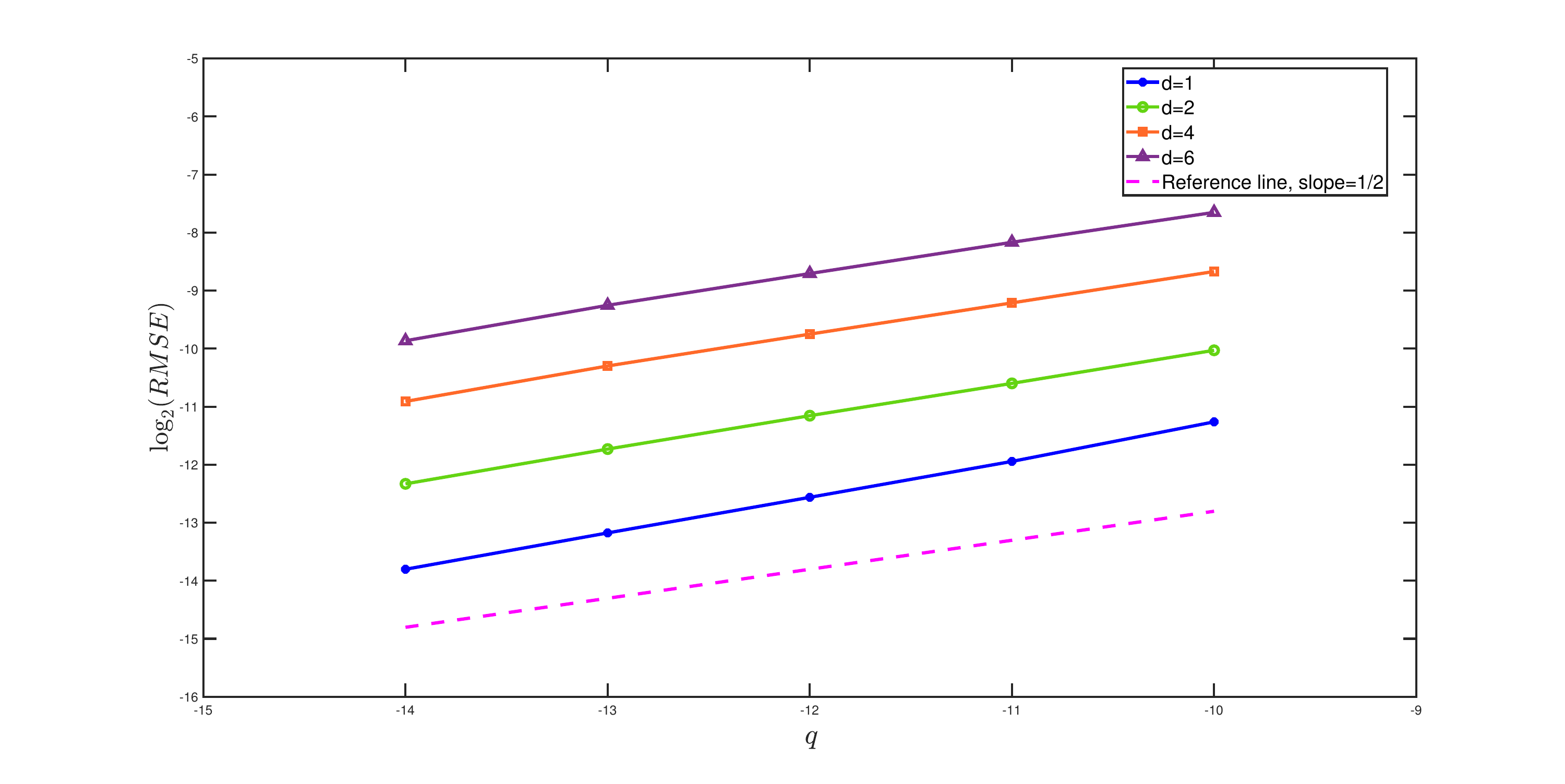}
\captionsetup{font=footnotesize}
  \caption{Strong  error v.s. step size  $\Delta$ for different dimension  $d$.}
\label{4}
\end{figure} }

\end{expl}

 \begin{expl}\label{exp7.2}{\rm
 Consider a two-dimensional MV-SDE 
 \begin{align*}
 \mathrm{d}X(t)=f(X(t),\mathcal{L}^{X}_{t})\mathrm{d}t+g(X(t),\mathcal{L}^{X}_{t})\mathrm{d}B_{t},
 \end{align*}
 where
 \begin{align*}
 f(x,\mu)=\begin{pmatrix}-2x_1+x_2-x_1^3+\int x_1 \mu(\mathrm{d}x)\\-2x_2-x_2^3+\int x_2\mu(\mathrm{d}x)\end{pmatrix},~~~g(x,\mu)=\frac{1}{2}\begin{pmatrix}(1-x_1^2)&0\\0&(1-x_2^2)\end{pmatrix},
 \end{align*} where $x=(x_1,x_2)\in \RR^{2}$.
A direct computation implies that  Assumptions \ref{ass1}-\ref{ass6} hold with $\alpha=2$, $2<p\leq 9$, and $2<p_0\leq 5$.  
Choose $  \varphi(u)=6(1+u^2), ~u>0, ~ H=30, ~ \kappa= {1}/{3}. $  This implies  
\begin{align*}
\pi_{\Delta}(x_1,x_2) =\displaystyle\left\{\begin{array}{lcl} (x_1,x_2),~~&~  x_1^2+x_2^2 \leq 5\Delta^{-\frac{1}{3}}-1 ,\\
\displaystyle  \sqrt{\frac{5\Delta^{-\frac{1}{3}}-1} {x_1^2+x_2^2}}(x_1,x_2),~~&~ x_1^2+x_2^2 > 5\Delta^{-\frac{1}{3}}-1.
\end{array}\right.
\end{align*}
Then the TEM scheme is described by
 \begin{equation}\label{eq7.1}
\begin{aligned}
\left\{
\begin{array}{rl}
&(\bar{Y}^{i,M}_{1,0},\bar{Y}^{i,M}_{2,0})=(X^{i}_{1,0}, X^{i}_{2,0}),~~i=1,2,\cdots,M,\\
&(Y^{i,M}_{1,t_{k}},Y^{i,M}_{2,t_{k}})=\pi_{\Delta}(\bar{Y}^{i,M}_{1,t_{k}},\bar{Y}^{i,M}_{2,t_{k}}) ,~~
k=0,1,\cdots,\\
&\bar{Y}^{i,M}_{1,t_{k+1}}=Y^{i,M}_{1,t_k}+\Big(-2Y^{i,M}_{1,t_k}+Y^{i,M}_{2,t_k}-(Y^{i,M}_{1,t_k})^3+\frac{1}{M}\sum_{i=1}^{M}Y^{i,M}_{1,t_k}\Big)\Delta+\frac{1}{2}\big(1-(Y^{i,M}_{1,t_k})^2\big)\Delta B^{i}_{1,t_k},\\
&\bar{Y}^{i,M}_{2,t_{k+1}}=Y^{i,M}_{2,t_k}+\Big(-2Y^{i,M}_{2,t_k}-(Y^{i,M}_{2,t_k})^3+
\frac{1}{M}\sum_{i=1}^{M}Y^{i,M}_{2,t_k}\Big)\Delta+\frac{1}{2}\big(1-(Y^{i,M}_{2,t_k})^2\big)\Delta B^{i}_{2,t_k}.
\end{array}
\right.
\end{aligned}
\end{equation}

It follows from Lemma \ref{L7.1} that exact solutions  of MV-SDE admit a unique invariant probability measure $\mu$. By the virtue of Theorems \ref{th7.2} and \ref{Th5.21},
the TEM numerical solution \eqref{eq7.1} has  a unique numerical invariant measure $\mu^{\Delta,M,*}$ approximating  $\mu$ in the $L^2$- Wasserstein metric.  To the best of our knowledge, several existing explicit numerical methods in the literature, such as \cite{MR4367675,MR4212406,MR4497846,MR4293705}, cannot treat this case. 

 \begin{figure}[H]
  \centering
\includegraphics[width=12cm,height=4cm]{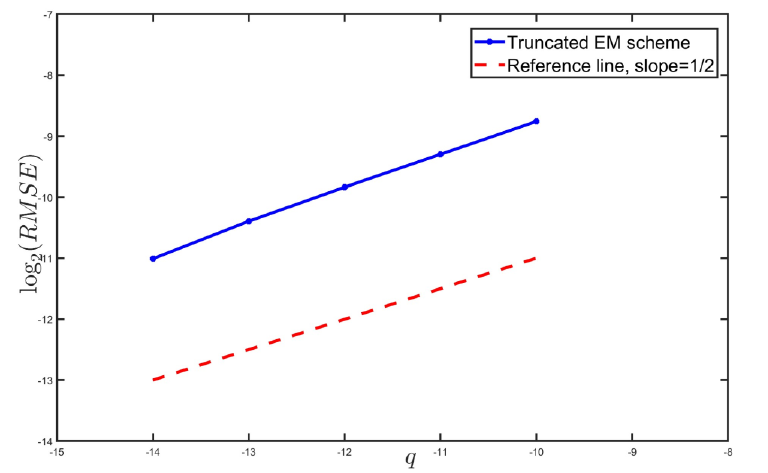}
\captionsetup{font=footnotesize}
  \caption{The blue one is the ~$\log_2(RMSE)$~as a function of~$q\in\{-10,-11,-12,-13,-14\}$, while the red one is the reference line with slope $ {1}/{2}$.}
\label{figure1*}
\end{figure}

Next,  we carry out some numerical experiments to  check the effectiveness of the  TEM  numerical scheme.
Let initial values $(X^{i}_{1,0},X^{i}_{2,0})$, $i=1,2,\cdots M$, obey the distribution $(\mathcal{N}(0,1),\mathcal{N}(0,1))$  independently.
Take  $N=10$, $M=5000$,  and $\Delta=2^{q}$, where $q\in\{-10, -11, -12, -13, -14\}$. Figure \ref{figure1*}  predicts that the TEM numerical solution has   the 1/2-order convergence rate. This supports our theoretical results. Furthermore, Figure \ref{figure16}  plots  the convergence rate of the TEM numerical solution with  $\Delta=2^q$ for particle number  
$M=500,~1000, ~1500, ~2000$, respectively. In Figure \ref{figure16}, the error curves for different $M$ are nearly coincident. This fact supports the theoretical result that  the approximation error is independent of  $M$.
 
Let $\bar{Y}^{M,M}_{t}=(\bar{Y}^{M,M}_{1,t},\bar{Y}^{M,M}_{2,t})$ and $\bar{Z}^{M,M}_{t}=(\bar{Z}^{M,M}_{1,t},\bar{Z}^{M,M}_{2,t})$ denote the TEM numerical solutions of the $M$th particle with the initial distribution $(\mathcal{N}(0,1),\mathcal{N}(0,1))$ and the initial value $(1,1)$, respectively.\begin{figure}[H]
  \centering
\includegraphics[width=12cm,height=4cm]{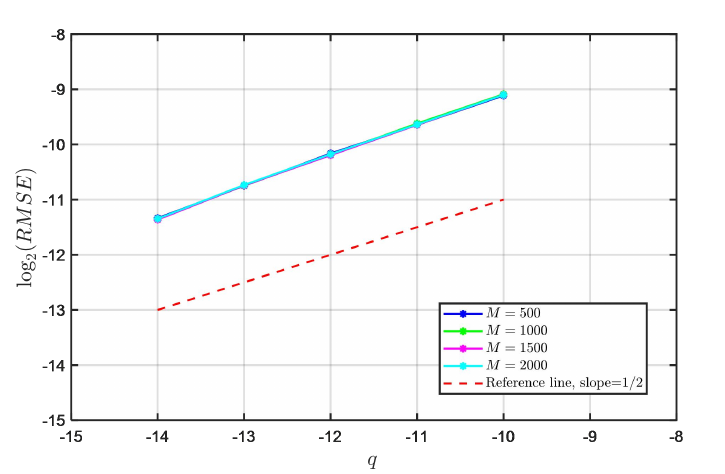}
\captionsetup{font=footnotesize}
  \caption{$\log_2(RMSE)$~as a function of~$q\in\{-10,-11,-12,-13,-14\}$ for $M=500,~1000, ~1500, ~2000$, respectively. The red dashed line  is the reference line with slope $ {1}/{2}$.}
\label{figure16}
\end{figure}
Figure \ref{figure7} plots  the empirical density functions of the numerical solutions $\bar{Y}^{M,M}_{t}$ (Figure \ref{figure7} (a)) and $\bar{Z}^{M,M}_{t}$ (Figure \ref{figure7} (b)) with $6000$ sample points at $t=100$ in 3D and 2D settings, respectively. Obviously, (a) and (b) in Figure \ref{figure7} appear to be highly similar. For clarity,  Figure \ref{figure8} compares the empirical cumulative distribution functions (ECDFs) for  $\bar{Y}^{M,M}_{t}$ and $\bar{Z}^{M,M}_{t}$ at $t=100$. On the other hand, using the Kolmogorov-Smirnov (K-S) test we know
that  the empirical distributions of $\bar{Y}^{M,M}_{t}$ and $\bar{Z}^{M,M}_{t}$ at $t=100$ are  from the same distribution with $0.05$ significance level. Thus, the existence and uniqueness of the numerical invariant probability measure {\color{blue}$\mu^{\Delta,M,*}$}  generated by the TEM scheme is predicted.

\begin{figure}[H]
  \centering
\includegraphics[width=12cm,height=4cm]{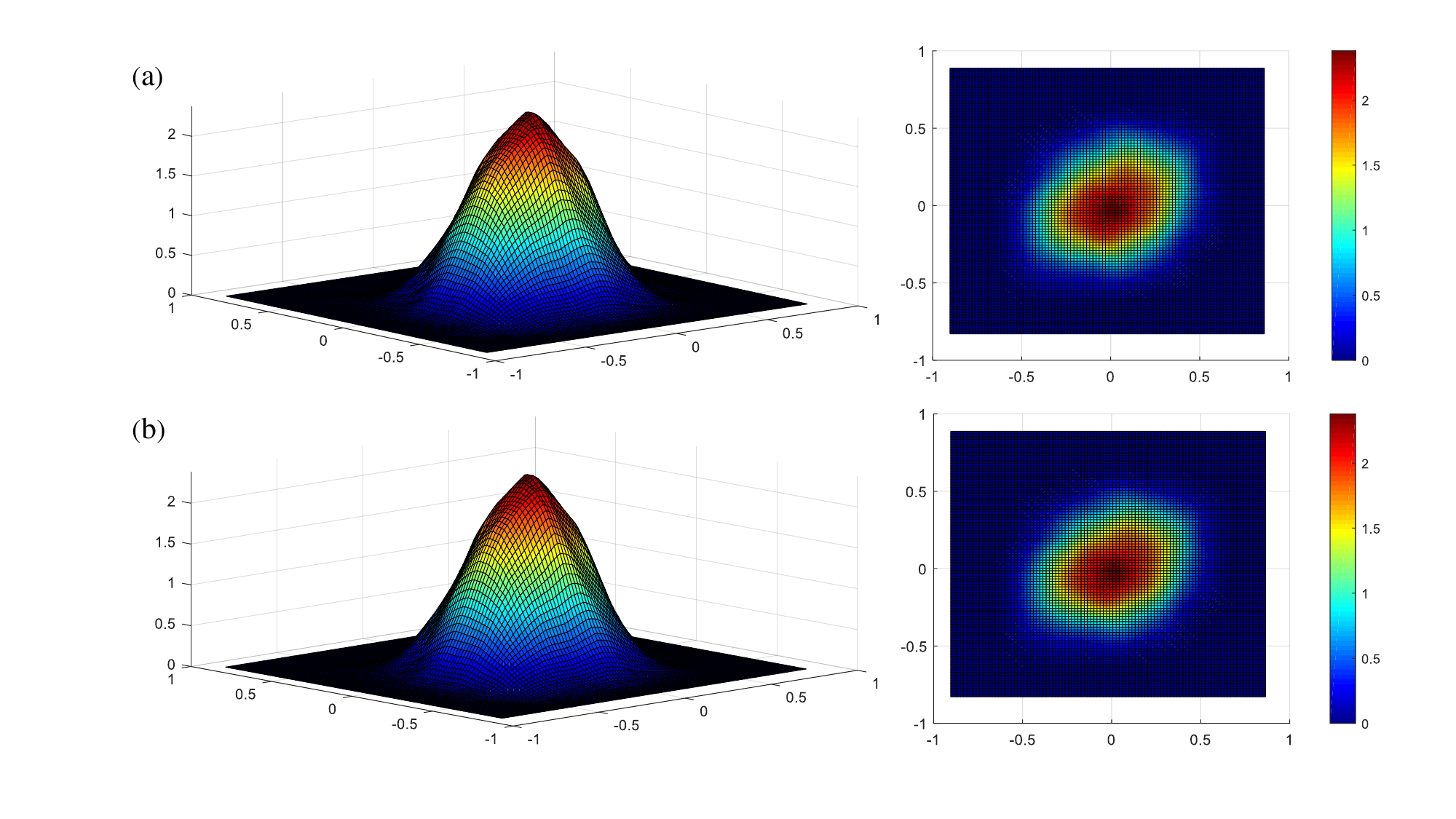}
\captionsetup{font=footnotesize}
  \caption{(a) The empirical density of $\bar{Y}^{M,M}_{100}$ with initial distribution  $(\mathcal{N}(0,1),\mathcal{N}(0,1))$ in $3D$ and $2D$ settings. (b) The empirical density of $\bar{Z}^{M,M}_{100}$ with initial value $(1,1) $ in $3D$ and $2D$ settings.}
\label{figure7}
\end{figure}
\begin{figure}[H]
  \centering
\includegraphics[width=12cm,height=4cm]{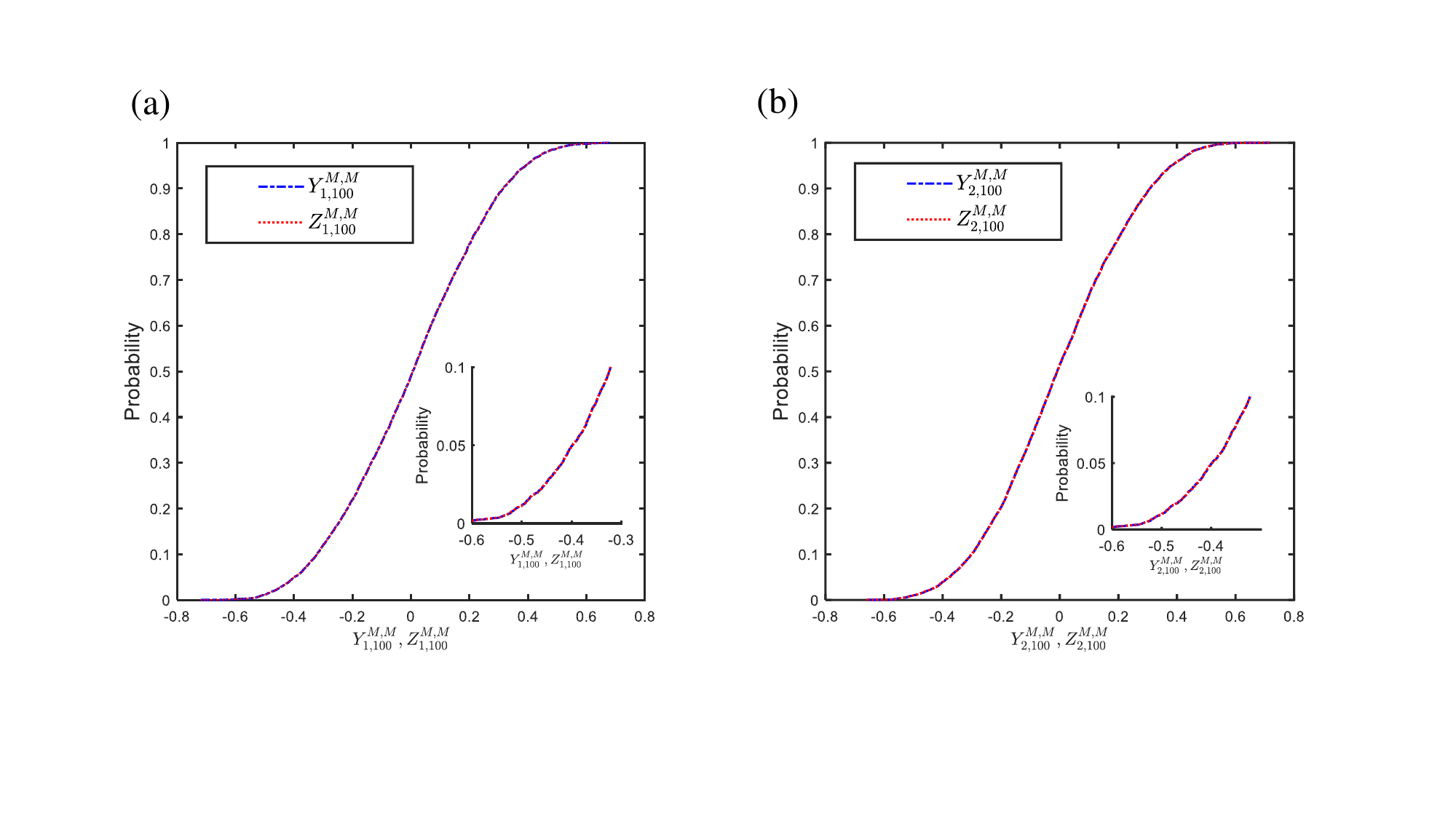}
\captionsetup{font=footnotesize}
  \caption{(a) The ECDFs of $\bar{Y}^{M,M}_{1,100}$ and $\bar{Z}^{M,M}_{1,100}$. (b) The ECDFs of $\bar{Y}^{M,M}_{2,100}$ and $\bar{Z}^{M,M}_{2,100}$. The blue dashed line represents the numerical solution with initial distribution $(\mathcal{N}(0,1),\mathcal{N}(0,1))$ while the red dashed line represents the numerical  solution with initial value (1,1).}
\label{figure8}
\end{figure}

\begin{figure}[H]
  \centering
\includegraphics[width=12cm,height=4cm]{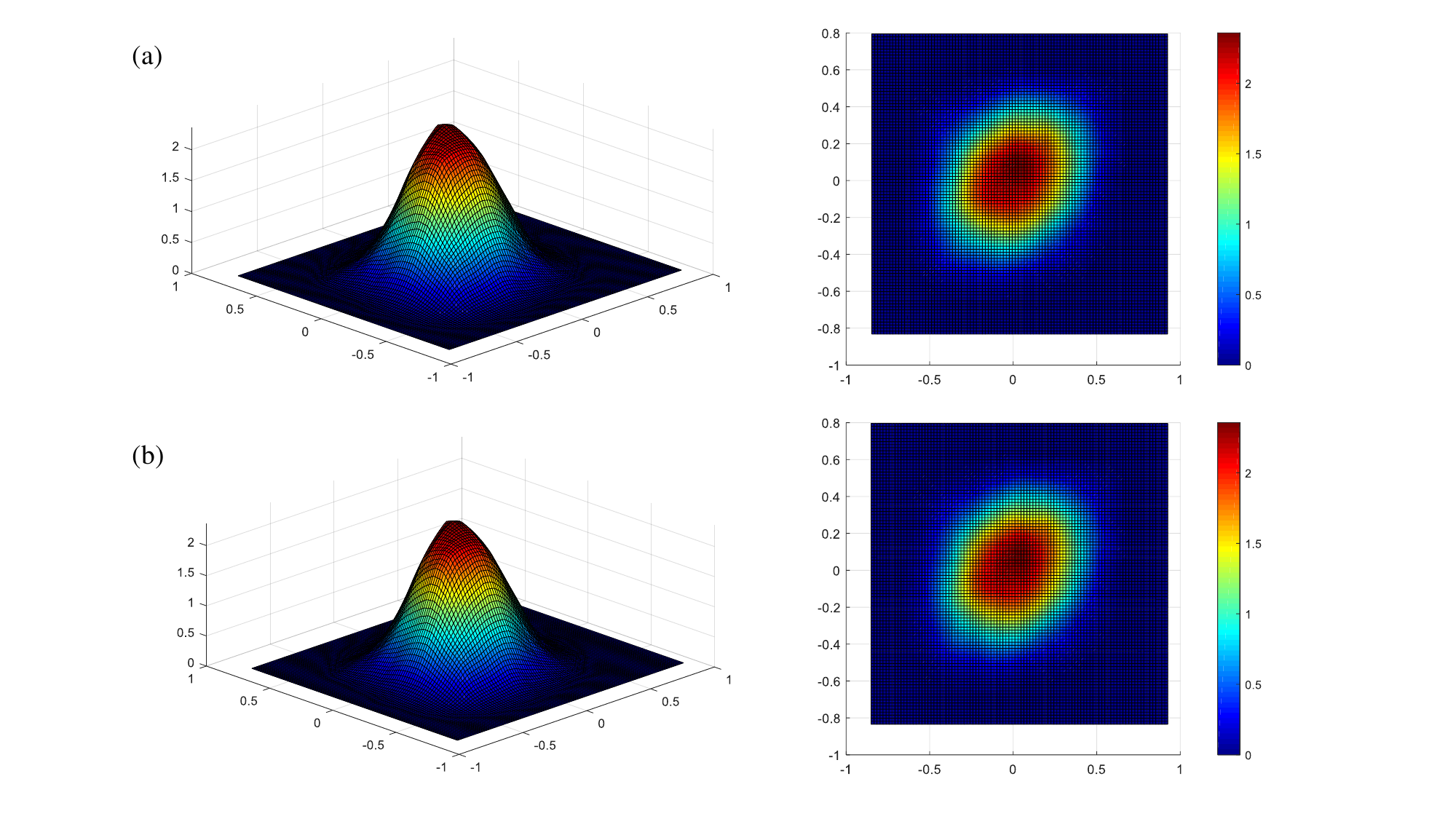}
\captionsetup{font=footnotesize}
  \caption{(a) The density function picture of $\mu$ in 3D and 2D settings. (b) The density function picture of $\mu^{\Delta,M,*}$ in 3D and 2D settings.}
\label{figure9}
\end{figure}
\begin{figure}[H]
  \centering
\includegraphics[width=12cm,height=4cm]{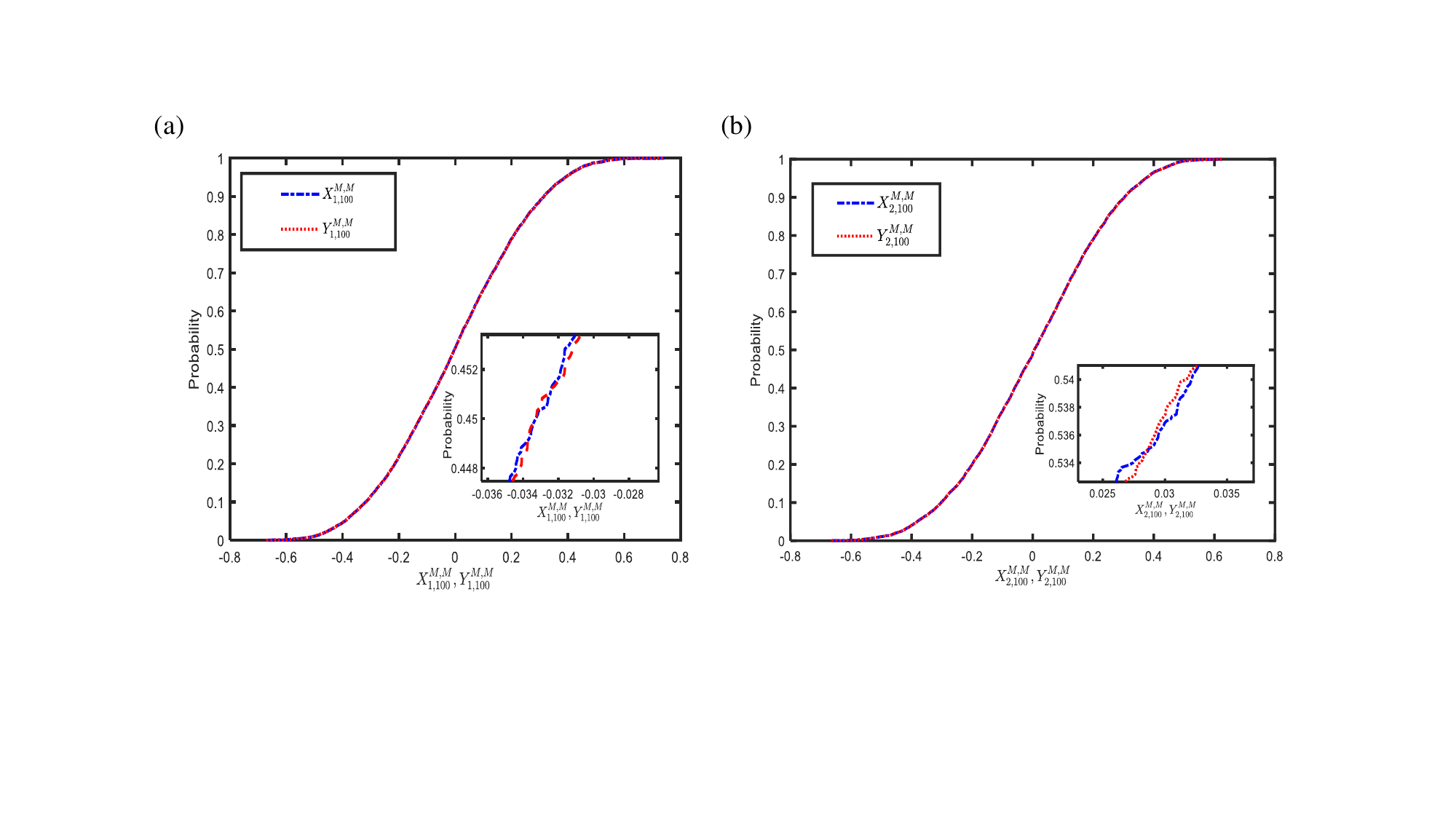}
\captionsetup{font=footnotesize}
  \caption{(a) The ECDFs of $X^{M,M}_{1,100}$ and $\bar{Y}^{M,M}_{1,100}$. (b) The ECDFs of $X^{M,M}_{2,100}$ and $\bar{Y}^{M,M}_{2,100}$. The  blue dashed line represents the exact solution of MV-SDE while the red dashed line represents the numerical solution generated by the TEM scheme.}
\label{figure10}
\end{figure}
Next, we continue to carry out some numerical experiments to verify the efficiency of the TEM scheme \eqref{eq7.1} in the approximation of invariant probability measures. Let $M=500$ and  the initial distribution obey $(\mathcal{N}(0,1),\mathcal{N}(0,1))$.
We regard  the  numerical solution with $\Delta=2^{-12}$  as the exact solution $X^{i,M}_{t}=(X^{i,M}_{1,t},X^{i,M}_{2,t})$, and compare it with the numerical solution  $\bar{Y}^{i,M}_{t}=(\bar{Y}^{i,M}_{1,t},\bar{Y}^{i,M}_{2,t})$ with  $\Delta=2^{-8}$. %In addition, let the initial distribution as $(\mathcal{N}(0,1),\mathcal{N}(0,1))$ and $N=6000$. 
Figure \ref{figure9} (a) and (b) depict   the empirical density functions of $X^{M,M}_t$ and  $\bar{Y}^{M,M}_{t}$ at $t=100$ with 6000 samples in 2D and 3D settings, respectively. 
Furthermore, Figure \ref{figure10} plots the ECDFs of the exact solution $X^{M,M}_{t}$ with a blue dashed line and the numerical solution $\bar{Y}^{M,M}_{t}$ with a red dashed line at $t=100$ with 6000 samples. On the other hand, using the K-S test, we conclude that the exact invariant measure $\mu$ and the numerical invariant measure  $\mu^{\Delta,M,*}$ generated by the TEM scheme  are from the same distribution with $0.05$ significance level. Therefore, $\mu^{\Delta,M,*}$  approximates $\mu$ effectively.}
\end{expl}
{\begin{expl}
{\rm Consider  the  scalar MV-SDE with the non-linear interaction term
\begin{align}\label{eqf6.3}
\mathrm{d}X(t)=\Big(-\frac{5}{2}X(t)- X^3(t)+\frac{1}{1+\exp\{-\mathbb{E}[\arctan(Z+X(t))]|_{Z=X(t)}\}}\Big)\mathrm{d}t+\sqrt{2}\mathrm{d}B_t,
\end{align}
where 
$$f(x,\mu)=-\frac{5}{2}x-x^3+\frac{1}{1+\exp\big\{-\int_{\mathbb{R}}\arctan(x+y)\mu(\mathrm{d}y)\big\}},~~g(x,\mu)=\sqrt{2}.$$
It can be verified that Assumptions \ref{ass1}-\ref{ass6} hold with $\alpha=2$ and any $p,p_0>2$.  We let $p=26$ and choose 
$ \kappa %=\frac{3\alpha}{p-2}\vee \frac{2\alpha}{p-4}
= {1}/{4}.$  The corresponding IPS is 
\begin{align*}
\mathrm{d}X^{i,M}_{t}&=\Big[-\frac{5}{2}X^{i,M}_{t}-\big(X^{i,M}_{t}\big)^3+\Big(1+\exp\Big\{-\frac{1}{M}\sum_{j=1}^{M}\arctan\big(X^{i,N}_{t}+X^{j,M}_{t}\big)\Big\}\Big)^{-1}\Big]\mathrm{d}t\nn\
\\&~~~+\sqrt{2}\mathrm{d}B^{i}_{t},~~  i=1, \cdots, M.
\end{align*}
Choose $  \varphi(u)=3(1+u^2), ~u>0, ~~H=30, ~~ \kappa= {1}/{4}. $  This implies  
\begin{align*}
\pi_{\Delta}(x) =\displaystyle\left\{\begin{array}{lcl} x,~~&~  |x| \leq \sqrt{10\Delta^{-\frac{1}{4}}-1} ,\\
\displaystyle  x\sqrt{10\Delta^{-\frac{1}{4}}-1},~~&~ |x| > \sqrt{10\Delta^{-\frac{1}{4}}-1}.
\end{array}\right.
\end{align*}
Then the TEM scheme is defined by
\begin{equation}\label{eq7.3}
\begin{aligned}
\left\{
\begin{array}{rl}
\bar{Y}^{i,M}_{0}&=X^{i}_0, ~~i=1,2,\cdots,M,\\
Y^{i,M}_{t_{k}}&=\pi_{\Delta}(\bar{Y}^{i,M}_{t_{k}}), ~~ k=0,1,\cdots,\\
\bar{Y}^{i,M}_{t_{k+1}}&=Y^{i,M}_{t_k}+Y^{i,M}_{t_k}\Big(-\frac{5}{2}-\big|Y^{i,M}_{t_k}\big|^2\Big)\Delta
\\&~~~+
\Big(1+\exp\Big\{-\frac{1}{M}\sum_{j=1}^{M}\arctan(X^{i,M}_{t}+X^{j,M}_{t})\Big\}\Big)^{-1}\Delta+\sqrt{2} \Delta B^{i}_{t_k},
\end{array}
\right.
\end{aligned}
\end{equation} 
We compare the above TEM scheme with the tamed scheme  \cite{Zhang} described by
\begin{align}\label{eqf6.5}
Z^{i,M}_{t_{k+1}}&=Z^{i,M}_{t_{k}}+\frac{-\frac{5}{2}Z^{i,M}_{t_{k}}-(Z^{i,M}_{t_{k}})^3}
{1+\sqrt{\Delta}\Big|-\frac{5}{2}Z^{i,M}_{t_{k}}-(Z^{i,M}_{t_{k}})^3\Big|}\Delta\\&~~~
+\Big(1+\exp\Big\{-\frac{1}{M}\sum_{j=1}^{M}\arctan\big(X^{i,M}_{t_{k}}+X^{j,M}_{t_{k}}\big)\Big\}\Big)^{-1}\Delta\nn\
+\sqrt{2}\Delta B^{i}_{t_{k}}.
\end{align}
Let initial data  $X^{i}_{0}$ obey the distribution $\mathcal{N}(0,1)$, $i=1,2,\cdots M$,  independently. 
Take  $N=50$, $M=1000$.
First, we use the TEM scheme \eqref{eq7.3} and the tamed EM scheme \eqref{eqf6.5} to carry out numerical experiments. %Let  $\Delta=2^{q}$. % to assess their strong convergence rates.  As depicted in 
Figure $\ref{exp31}~(a)$ predicts that the TEM scheme achieves the  $1$-order convergence rate while the tamed EM scheme only attains the $1/2$-order convergence rate at time $T=1$ with respect to step size  $\Delta=2^{q}$  for $q= -9,-10,-11,-12,-13 $. %Furthermore, for different step sizes $\Delta=2^{q}$ with 
Figure $\ref{exp31}~(b)$ depicts the numerical errors  from the TEM and tamed EM schemes, respectively, with $\Delta=2^{q}$ for $q\in \{-5, -6, -7, -8, -9, -10, -11, -12, -13, -14\}$, with respect to the running times. Obviously, for the same error $0.01$, the running time of the TEM scheme is significantly shorter than that of the tamed EM scheme in the same computer.
 
\begin{figure}[H]
  \centering
\includegraphics[width=12cm,height=4cm]{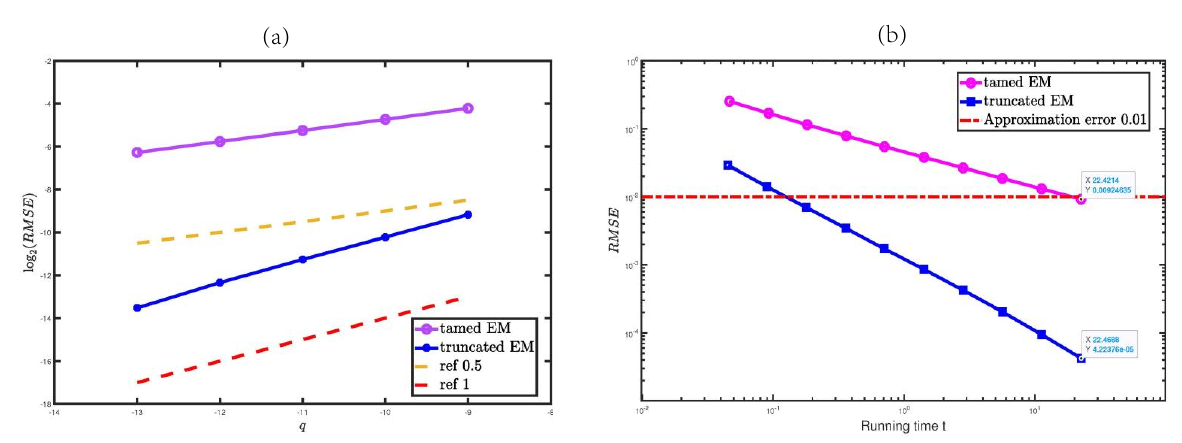}
\captionsetup{font=footnotesize}
  \caption{(a) Strong error v.s. step size $\Delta$. (b) Strong error v.s. Running time. }
\label{exp31}
\end{figure}
Let $M=500$, $N=1000$, and $\Delta=2^{-4}$.  We   compare the performance of the TEM scheme \eqref{eq7.3} and the tamed EM scheme \eqref{eqf6.5}   approximating the underlying invariant distribution of the MV-SDE \eqref{eqf6.3}. Figure \ref{figure12} depicts the empirical densities of numerical solutions generated by the TEM and the tamed EM schemes at  times $t=2, 4, 20$ for  the initial distributions   $\mathcal{N}(3,9)$ $(\ref{figure12}~(a))$ and  $\mathcal{N}(20,400)$ $(\ref{figure12}~(b))$.
  Figure \ref{figure12} reveals that  the distributions of the  TEM numerical solutions with different  initial data    converge rapidly  to a stationary state while the Tamed EM scheme fails to do it.
\begin{figure}[H]
  \centering
\includegraphics[width=12cm,height=4cm]{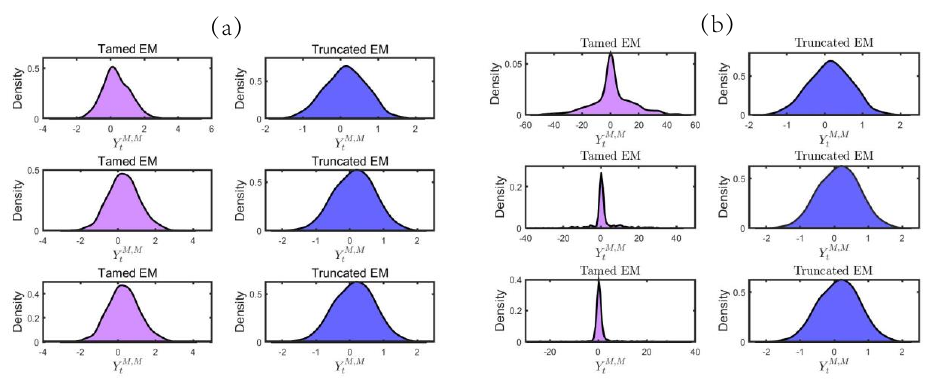}
\captionsetup{font=footnotesize}
  \caption{(a) Density with $X_0\sim N(3,9)$ at $t=2, 20,30$. (b) Density with $X_0\sim N(20,400)$ at $t=2, 20, 30$. }
\label{figure12}
\end{figure}}

\end{expl}}
  \begin{appendix}

\section{Proof of Counterexample } \label{appendix2}
This appendix aims to give a rigorous proof  that the EM numerical solution of MV-SDE \eqref{Ne4}  blows up  and is not exponentially  stable  as  the original  and the  IPS in Lemma  \ref{th6.2} whatever the step size $\Delta$ is small.

For any  $\Delta\in (0,1]$, the EM scheme of IPS \eqref{Ne1} is
\begin{equation}\label{Ne5}
\begin{aligned}
\left\{
\begin{array}{rl}
Z^{i,M}_{0}&=X^i_0, ~~~~i=1,\cdots, M,\\
Z^{i,M}_{t_{k+1}}&=Z^{i,M}_{t_k}+\Big[Z^{i,M}_{t_k}\Big(-2-\big|Z^{i,M}_{t_k}\big|\Big)+\frac{1}{M}\sum_{j=1}^{M}Z^{j,M}_{t_k}\Big]\Delta+\frac{1}{2}|Z^{i,M}_{t_k}|^{\frac{3}{2}}\Delta B^{i}_{t_k}
\\&={\Big(Z^{i,M}_{t_{k}}(1-2\Delta)-Z^{i,M}_{t_{k}}|Z^{i,M}_{t_{k}}|\Delta+\frac{1}{2}|Z^{i,M}_{t_{k}}|^{\frac{3}{2}}\Delta B^{i}_{t_{k}}\Big)+\frac{1}{M}\sum_{j=1}^{M}Z^{j,M}_{t_{k}}\Delta.}
\end{array}
\right.
\end{aligned}
\end{equation}
where $\Delta B^{i}_{t_k}=B^{i}_{t_{k+1}}-B^{i}_{t_k}$. 
\begin{lemma}\label{AL2}
Fix any  $\Delta\in (0,1]$.
Assume that  
$
(\mathbb{E}|X_0|^2)^2\geq 12/\Delta^2. 
$
  Then the EM numerical solution given by \eqref{Ne5} satisfies   $\mathbb{E}|Z^{i,M}_{t_k}|^2\geq 2^{k}\mathbb{E}|X_0|^2.$  Moreover, $ \lim_{k\rightarrow\infty}\mathbb{E}|Z^{i,M}_{t_k}|^2=\infty.$ 
\end{lemma}
\begin{proof}
It follows from \eqref{Ne5} that 
\begin{align}\label{eqNN9}
|Z^{i,M}_{t_{k+1}}|^2\geq\mathcal{A}^{i,M}_{t_k}+\mathcal{B}^{i,M}_{t_k},
\end{align}
where \begin{align}
 \mathcal{A}^{i,M}_{t_k}&=\Big(Z^{i,M}_{t_k}(1-2\Delta)-Z^{i,M}_{t_{k}}|Z^{i,M}_{t_k}|\Delta+\frac{1}{2}|Z^{i,M}_{t_k}|^{\frac{3}{2}}\Delta B^{i}_{t_k}\Big)^2,\nn\\ 
\mathcal{B}^{i,M}_{t_k}&=\big(\frac{1}{M}\sum_{j=1}^{M}Z^{j,M}_{t_k}\big)^2\Delta^2+2Z^{i,M}_{t_k}\big(\frac{1}{M}\sum_{j=1}^{M}Z^{j,M}_{t_k}\big)\Delta(1-2\Delta)-2\big(Z^{i,M}_{t_k}\big)^2\big|\frac{1}{M}\sum_{i=1}^{M}Z^{i,M}_{t_k}\big|\Delta^2\nn\
\\&~~~+|Z^{i,M}_{t_k}|^{\frac{3}{2}}\big(\frac{1}{M}\sum_{i=1}^{M}Z^{i,M}_{t_k}\big)\Delta B^{i}_{t_k}\Delta.\label{N7.12}
\end{align}
Furthermore, a direct computation yields that
\begin{align}\label{eqN5}
\mathcal{A}^{i,M}_{t_k}&=|Z^{i,M}_{t_{k}}|^2(1-2\Delta)^2 +|Z^{i,M}_{t_k}|^4\Delta^2+\frac{1}{4}|Z^{i,M}_{t_k}|^3|\Delta B^{i}_{t_k}|^2 -2|Z^{i,M}_{t_k}|^3\Delta(1-2\Delta) \nn\
\\&~~~ + Z^{i,M}_{t_k} |Z^{i,M}_{t_k}|^{\frac{3}{2}}\Delta B^{i}_{t_k}(1-2\Delta)-Z^{i,M}_{t_k} |Z^{i,M}_{t_k}|^{\frac{5}{2}} \Delta B^{i}_{t_k}\Delta.
%\\&\geq |Z^{i,M}_{t_{k}}|^2(1-2\Delta)^2+|Z^{i,M}_{t_k}|^4\Delta^2-2|Z^{i,M}_{t_k}|^3\Delta(1-2\Delta)+Z^{i,M}_{t_k} |Z^{i,M}_{t_k}|^{\frac{3}{2}}\Delta B^{i}_{t_k}(1-2\Delta)\nn\
%\\&~~~ 
%-Z^{i,M}_{t_k} |Z^{i,M}_{t_k}|^{\frac{5}{2}} \Delta B^{i}_{t_k}\Delta .
\end{align}
Using the Young inequality implies that for any $\delta_1>0$, 
\begin{align*}
-2|Z^{i,M}_{t_k}|^3\Delta(1-2\Delta)\geq -\delta_1|Z^{i,M}_{t_k}|^2(1-2\Delta)^2-\frac{1}{\delta_1}|Z^{i,M}_{t_k}|^4\Delta^2.
\end{align*}
Using the above inequality we obtain from \eqref{eqN5} that
\begin{align}\label{eqNN8}
\mathcal{A}^{i,M}_{t_k}&\geq (1-\delta_1)|Z^{i,M}_{t_k}|^2(1-2\Delta)^2+\big(1-\frac{1}{\delta_1}\big)|Z^{i,M}_{t_k}|^4\Delta^2+Z^{i,M}_{t_k} |Z^{i,M}_{t_k}|^{\frac{3}{2}}\Delta B^{i}_{t_k}(1-2\Delta)\nn\
\\&~~~-Z^{i,M}_{t_k} |Z^{i,M}_{t_k}|^{\frac{5}{2}} \Delta B^{i}_{t_k}\Delta.
\end{align}
Owing to the  conditional independence of $Z^{i,M}_{t_k}$ and $\Delta B^{i}_{t_k}$ for any $1\leq i\leq M$, we obtain that
\begin{align}\label{eqNN8*}
\mathbb{E}\Big[Z^{i,M}_{t_k} |Z^{i,M}_{t_k}|^{\frac{3}{2}}\Delta B^{i}_{t_k}(1-2\Delta)\Big]=0,
~~~~\mathbb{E}\Big[Z^{i,M}_{t_k} |Z^{i,M}_{t_k}|^{\frac{5}{2}} \Delta B^{i}_{t_k}\Delta\Big]=0.
\end{align}
Then taking the expectations on both sides of  \eqref{eqNN8} and  using \eqref{eqNN8*}  {yields} that
\begin{align}\label{Ne7.15}
\mathbb{E}\mathcal{A}^{i,M}_{t_k}&\geq (1-\delta_1)(1-2\Delta)^2\mathbb{E}|Z^{i,M}_{t_k}|^2+\big(1-\frac{1}{\delta_1}\big)\Delta^2\mathbb{E}|Z^{i,M}_{t_k}|^4.
\end{align}
Next we  estimate   $\mathbb{E}\mathcal{B}^{i,M}_{t_k}$.
Applying the Young inequality derives that for any $\delta_2>0$ and $\delta_3>0$,
 \begin{align}\label{eqNN7}
2Z^{i,M}_{t_k}\big(\frac{1}{M}\sum_{i=1}^{M}Z^{i,M}_{t_k}\big)\Delta(1-2\Delta)&\geq -\delta_2|Z^{i,M}_{t_k}|^2(1-2\Delta)^2-\frac{\Delta^2}{\delta_2}\big(\frac{1}{M}\sum_{i=1}^{M}Z^{i,M}_{t_k}\big)^2,
 \\ 
-2(Z^{i,M}_{t_k})^2\big|\frac{1}{M}\sum_{i=1}^{M}Z^{i,M}_{t_k}\big|\Delta^2&\geq -\delta_3\big(\frac{1}{M}\sum_{i=1}^{M}Z^{i,M}_{t_k}\big)^2\Delta^2-\frac{\Delta^2}{\delta_3}|Z^{i,M}_{t_k}|^4.
\label{eqNN6}\end{align} 
Owing to the conditional independence between {$Z^{i,M}_{t_k}$} and $\Delta B^{i}_{t_k}$ for any $1\leq i,j\leq M$, we have
\begin{align}\label{Nee7.18}
\mathbb{E}\Big(|Z^ {i,M}_{t_k}|^{\frac{3}{2}}\big(\frac{1}{M}\sum_{i=1}^{M}Z^{i,M}_{t_k}\big)\Delta B^{i}_{t_k}\Delta\Big)=0.
\end{align}
Then inserting \eqref{eqNN7} and \eqref{eqNN6} into \eqref{N7.12} and taking the expectations on both sides of \eqref{N7.12}, we deduce from \eqref{Nee7.18}  that
\begin{align}\label{Ne7.19}
\mathbb{E}\mathcal{B}^{i,M}_{t_k}&\geq \big(1-\delta_3-\frac{1}{\delta_2}\big)\Delta^2\mathbb{E}\big(\frac{1}{M}\sum_{i=1}^{M}Z^{i,M}_{t_k}\big)^2-\delta_2(1-2\Delta)^2\mathbb{E}|Z^{i,M}_{t_k}|^2-\frac{\Delta^2}{\delta_3}\mathbb{E}|Z^{i,M}_{t_k}|^4.
%\\&\geq(1-\delta_3)\Delta^2\mathbb{E}|Z^{i,M}_{t_k}|^2-\delta_2(1-2\Delta)^2\mathbb{E}|Z^{i,M}_{t_k}|^2-(\frac{1}{\delta_2}+\frac{1}{\delta_3})\Delta^2\mathbb{E}|Z^{i,M}_{t_k}|^4,
\end{align}
Thus, combing \eqref{eqNN9}, \eqref{Ne7.15} and \eqref{Ne7.19}  arrives at
{\begin{align}\label{A7.16}
\mathbb{E}|Z^{i,M}_{t_{k+1}}|^2&\geq(1-\delta_1-\delta_2) (1-2\Delta)^2 \mathbb{E}|Z^{i,M}_{t_k}|^{2}+\big(1-\delta_3-\frac{1}{\delta_2}\big)\Delta^2\mathbb{E}\big(\frac{1}{M}\sum_{i=1}^{M}Z^{i,M}_{t_k}\big)^2\nn\
\\&~~~+\big(1-\frac{1}{\delta_1}-\frac{1}{\delta_2}\big)\Delta^2\mathbb{E}|Z^{i,M}_{t_k}|^4.
\end{align}}
{Letting $\delta_1=\delta_3=4$ and $\delta_2=1$, and using the H\"older inequality we derive that
\begin{align*}
\mathbb{E}|Z^{i,M}_{t_{k+1}}|^2 &\geq \frac{3\Delta^2}{4}\mathbb{E}|Z^{i,M}_{t_{k}}|^4-4(1-2\Delta)^2\mathbb{E}|Z^{i,M}_{t_k}|^{2}-4\Delta^2\mathbb{E}\big(\frac{1}{M}\sum_{i=1}^{M}Z^{i,M}_{t_k}\big)^2
\\&\geq \Big( \frac{\Delta^2}{2}\mathbb{E}|Z^{i,M}_{t_{k}}|^2-4(1-2\Delta)^2-4\Delta^2\Big)\mathbb{E}|Z^{i,M}_{t_{k}}|^2,
\end{align*}}
where the last inequality uses the fact that 
{$  \mathbb{E}\big(\frac{1}{M}\sum_{i=1}^{M}Z^{i,M}_{t_{k}}\big)^2\leq \mathbb{E}|Z^{i,M}_{t_{k}}|^2.$ }
Owing to $\Delta\in (0, 1]$, we have 
{$ \mathbb{E}|Z^{i,M}_{t_{k+1}}|^2\geq  2\Big(\frac{\Delta^2}{4}\mathbb{E}|Z^{i,M}_{t_{k}}|^2-4\Big)\mathbb{E}|Z^{i,M}_{t_{k}}|^2.
$ }
{If $ \mathbb{E}|X_0|^2 \geq 20/\Delta^2$, then $ \frac{\Delta^2}{4}\mathbb{E}|Z^{i,M}_{t_{0}}|^2-4\geq 1 $ follows from $\mathbb{E}|Z^{i,M}_{t_{0}}|^2=\mathbb{E}|X_0|^2$. By solving the above difference inequality we have $\mathbb{E}|Z^{i,M}_{t_{1}}|^2\geq 2 \mathbb{E}|X_0|^2 \geq 40/\Delta^2$. Then $ \frac{\Delta^2}{4} \mathbb{E}|Z^{i,M}_{t_{1}}|^2-4 \geq 1 $ holds, which implies  $\mathbb{E}|Z^{i,M}_{t_{2}}|^2\geq 2^2 \mathbb{E}|X_0|^2$.} Repeating this process, we obtain $\mathbb{E}|Z^{i,M}_{t_{k}}|^2\geq 2^{k} \mathbb{E}|X_0|^2$ for all positive integer $k$, which implies that $\lim_{k\rightarrow\infty}\mathbb{E}|Z^{i,M}_{t_k}|^2=\infty$.  
\end{proof}

{The code for Figures \ref{FN1} and \ref{3} is provided as follows:}
\begin{mdframed}[linewidth=1pt, linecolor=black]  % 边框宽度1pt，黑色
 \begin{verbatim}
% Matlab code for calculating the truncated EM approximation X(t)
clc
clear
rng(101)
T=10; dt = 0.05; M=2000; Xzero=18; Ndot= T/dt;
Y=[]; Z=[]; Y(:,1)=ones(M,1).*Xzero; dw=sqrt(dt)*normrnd(0,1,M,Ndot);
for n=1:Ndot
TH_1=50*dt^(-1/3)/4-1;
Y_Y=sum(Y(:,n))/M;
Y(:,n+1)=Y(:,n)+Y(:,n).*(-2-abs(Y(:,n))).*dt+ Y_Y.*dt 
+ 1/2.*abs(Y(:,n)).^(3/2).*dw(:,n);
Y(:,n+1)=min(1,TH_1./abs(Y(:,n+1))).*Y(:,n+1);
end
Z(:,1)=ones(M,1).*Xzero;
for n=1:Ndot
Z_Z=(sum(Z(:,n))/M)  
Z(:,n+1)=Z(:,n)+Z(:,n).*(-2-Z(:,n))*dt+ Z_Z.*dt 
+ 1/2.*abs(Z(:,n)).^(3/2).*dw(:,n);
end

figure(1) 
for i=1:M
plot((0:dt:T),Y(i,:)) 
xlabel('$t$','Interpreter','latex','fontsize',14)
ylabel('Values of truncated EM paths','Interpreter','latex','fontsize',14)
axis([0 5 -40 40])
grid on
hold on
z=ones(1,Ndot+1).*TH_1;
h=plot((0:dt:T), z,  '--m','linewidth',1)
plot((0:dt:T), -z,  '--m','linewidth',1)
end
legend(h,'Truncation radius');

figure(2)
for i=1:M
plot((0:dt:T),Z(i,:))
xlabel('$t$','Interpreter','latex','fontsize',14)
ylabel('Values of EM paths','Interpreter','latex','fontsize',14)
axis([0 0.3 -800 10]) 
grid on
hold on
end
\end{verbatim}
\end{mdframed}
\end{appendix}
\section*{Acknowledgements}
\sloppy

 The authors would like to thank the  editor and referees for their helpful and valuable suggestions. 

Research of Yuanping Cui was supported by the National Natural
Science Foundation of China (No. 12401216).
Research of Xiaoyue Li was supported by the National Natural
Science Foundation of China (No. 12371402, 12526433) and the Tianjin Natural Science Foundation (24JCZDJC00830).  
 Research of Fengyu Wang was supported by  the National Key R\&D Program of China (No. 2022YFA1006000, 2020YFA0712900).

}\end{sloppypar}
\end{document}